\newtheorem{lm}{Lemma}[section]
\newtheorem{teo}[lm]{Theorem}
\newtheorem{coro}[lm]{Corollary}
\newtheorem{prop}[lm]{Proposition}
\theoremstyle{definition}
\newtheorem{oss}[lm]{Remark}
\newtheorem{defi}[lm]{Definition}
\newtheorem*{ack}{Acknowledgements}
\numberwithin{equation}{section}
\title{The fractional Cheeger problem}
\author[Brasco]{L. Brasco}
\address{{\bf L. B.} and {\bf E. P.}, LATP, Aix--Marseille Universit\'e, 13453 Marseille, France}
\email{lorenzo.brasco@univ-amu.fr, enea.parini@latp.univ-mrs.fr}
\author[Lindgren]{E. Lindgren}
\address{{\bf E. L.} Department of Mathematical Sciences, NTNU, 7491 Trondheim, Norway}
\email{erik.lindgren@math.ntnu.no}
\author[Parini]{E. Parini}
\keywords{Cheeger constant, nonlocal eigenvalue problems, almost minimal surfaces}
\subjclass[2010]{49Q20, 47A75, 47J10}
\begin{document}

\maketitle
\begin{abstract}
Given an open and bounded set $\Omega\subset\mathbb{R}^N$, we consider the problem of minimizing the ratio between the $s-$perimeter and the $N-$dimensional Lebesgue measure among subsets of $\Omega$. This is the nonlocal version of the well-known {\it Cheeger problem}. We prove various properties of optimal sets for this problem, as well as some equivalent formulations. In addition, the limiting behaviour of some nonlinear and nonlocal eigenvalue problems is investigated, in relation with this optimization problem. The presentation is as self-contained as possible.
\end{abstract}

\tableofcontents

\section{Introduction}

\subsection{Aim and results of the paper} In this paper we introduce and study the {\it nonlocal/fractional Cheeger problem}  in an open and bounded set $\Omega\subset\mathbb{R}^N$. This amounts to finding a set $E\subset \Omega$ such that
\begin{equation}
\label{eq:cheeger}
\frac{P_s(E)}{|E|}=\inf_{A\subset \Omega}\frac{P_s(A)}{|A|}.
\end{equation}
Here $|\cdot|$ stands for the $N-$dimensional Lebesgue measure, $P_s$ for the {\it nonlocal $s$-perimeter},
$$
P_s(A)=\int_{\mathbb{R}^N}\int_{\mathbb{R}^N}\frac{|1_A (x)-1_A (y)|}{|x-y|^{N+s}}\, dx\, dy, \qquad s\in (0,1),
$$
and $1_A$ is the characteristic function of a set $A$.
An {\it $s-$Cheeger set} of $\Omega$ is a set $E$ satisfying \eqref{eq:cheeger}. Accordingly the quantity 
$$
h_s(\Omega)=\frac{P_s(E)}{|E|},
$$ 
is called the {\it $s-$Cheeger constant} of $\Omega$. We point out that recently the study of nonlocal geometric quantities like $P_s$ has received a great impulse, as they arise in the modelization of phase-transitions in presence of nonlocal interaction terms. We refer to the survey \cite{FV} for an updated account on these studies.
\vskip.2cm
Problem \eqref{eq:cheeger} turns out to have many interesting features and appears to be less obvious to understand than its local counterpart, the (usual) Cheeger problem, where a Cheeger set is a set $E$ achieving the infimum
\begin{equation}
\label{usual}
h_1(\Omega)=\inf_{A\subset \Omega}\frac{P(A)}{|A|},
\end{equation}
with $P(A)$ being the distributional perimeter of $A$, i.e. the total variation of the measure $\nabla 1_A$. Problem \eqref{usual} was first introduced by Jeff Cheeger in \cite{C} in the context of Riemannian Geometry, see also \cite{Pa2} for an overview of the problem.
\par
It is well-known that $h_1(\Omega)$ is indeed an optimal Poincar\'e constant, namely
\[
h_1(\Omega)=\inf_{u\in W^{1,1}_0(\Omega)\setminus\{0\}} \frac{\displaystyle\int_\Omega |\nabla u|\, dx}{\displaystyle\int_\Omega |u|\, dx},
\]
and that $h_1(\Omega)$ is the limit of the first eigenvalue of the $p-$Laplacian as $p$ goes to $1$, see \cite[Corollary 6]{KF}.
In the same spirit, in this paper we prove that the $s-$Cheeger constant can be equivalently characterized as the following $W^{s,1}-$eigenvalue (see Theorem \ref{teo:primo1})
$$
\lambda_{1,1}^s(\Omega):=\inf_{u\in W_0^{s,1}(\Omega)\setminus\{0\}}\frac{\displaystyle\int_{\mathbb{R}^N}\int_{\mathbb{R}^N}\frac{|u(x)-u(y)|}{|x-y|^{N+s}}\, dx\, dy}{\displaystyle\int_{\Omega} |u|\, dx},
$$
and that $h_s(\Omega)$ coincides with the limit as $p$ goes to $1$ of the nonlinear and nonlocal eigenvalues $\lambda^s_{1,p}$ (see Theorem \ref{teo:lim}), coming from the eigenvalue problem
\begin{equation}
\label{eq:speigen}
2\,\int_{\mathbb{R}^N}\frac{|u(y)-u(x)|^{p-2}(u(y)-u(x))}{|x-y|^{N+s\,p}}\,dy+\lambda^s_{1,p}(\Omega)\, |u(x)|^{p-2}u(x)\, =0,\quad x\in\Omega,
\end{equation}
which has been first introduced and studied by Lindqvist and the second author in \cite{LL}. 
\par
We remark that both in \eqref{eq:speigen} and in the definition of the $s-$perimeter, the integrals are taken over the whole $\mathbb{R}^N$ and not only over $\Omega$ itself. The reason is twofold: if one only integrates over $\Omega$ then all sets would have $s-$Cheeger constant equal to zero; on the other hand, the problem \eqref{eq:speigen} would not have the appropriate scaling properties.
\par
For the problem \eqref{eq:speigen} we also provide a global $L^\infty$ estimate for the solutions (Theorem \ref{lm:stimetta}) and a Faber-Krahn inequality with identification of equality cases (Theorem \ref{teo:FK}), which were both missing in \cite{LL}.
\par
Using a scaling argument, it is easy to see that $s-$Cheeger sets must touch the boundary of $\Omega$. We are able to prove that, as in the local case, this happens in a $C^1$ fashion at the points where $\partial \Omega$ is regular. Moreover we show that in the interior of $\Omega$ any $s-$Cheeger set is, up to a singular set of dimension $N-2$, a $C^{1,\alpha}$ surface having constant {\it non-local mean curvature} equal to $-h_s(\Omega)$, in the sense that
\begin{equation}
\label{cmc}
\lim_{\delta\to 0^+}\int_{\mathbb{R}^N\setminus B_\delta(x_0)}\frac{1_E(x)-1_{\mathbb{R}^N\setminus E}(x)}{|x-x_0|^{N+s}}\, dx =-h_s(\Omega),
\end{equation}
for $x_0\in \partial E\cap \Omega$. 
\par
Finally, we provide another alternative characterization of the $s-$Cheeger constant, i.e., 
\begin{equation}
\label{inverse}
\frac{1}{h_s(\Omega)}=\min\{\|\varphi\|_{L^\infty(\mathbb{R}^N\times \mathbb{R}^N)}\,:\, R^*_{s,1}(\varphi)=1 \mbox{ in }\Omega\}, 
\end{equation}
where $R^*_{s,1}$ is the adjoint of the following linear and continuous operator 
$$
u\mapsto \frac{u(x)-u(y)}{|x-y|^{N+s}}. 
$$
This is a nonlocal version of the Max Flow Min Cut Theorem, which can be useful to obtain lower bounds on $h_s(\Omega)$. We recall that for the local case this was investigated in \cite{St}, where the following characterization 
\[
\frac{1}{h_1(\Omega)}=\min_{V\in L^\infty(\Omega;\mathbb{R}^N)}\left\{\|V\|_{L^\infty(\Omega)}\, : \, -\mathrm{div\,} V=1 \mbox{ in }\Omega\right\},
\]
was obtained (see also \cite{Gr}).

\subsection{Open problems} We are left with many open questions and problems. Since the nonlocal mean curvature is a quantity that takes into account the global behavior, the property \eqref{cmc}  
can not in general imply a local characterization of the boundary of a Cheeger set. Even for dimension $N=2$ we are not able to provide any finer information about the interior behaviour of $s-$Cheeger sets, apart from the $C^1$ regularity. However, we should mention that even for the usual Cheeger constant $h_1(\Omega)$, explicitly determining or inferring fine properties of the Cheeger sets are difficult tasks. These usually become affordable for $N=2$, when some severe geometric restrictions are imposed on $\Omega$ (see for example \cite{KT,KP}).
\par
A deep difference between the nonlocal case and the usual one is enlightened by the following behaviour: as it is shown in Remark \ref{oss:buchi}, for a sequence of sets $\{E_k\}_{k\in\mathbb{N}}\subset\mathbb{R}^N$ such that
\[
P(E_k)\le C\qquad \mbox{ and } \qquad \lim_{k\to\infty}|E_k|=0,
\]
the $s-$perimeter as well converges to $0$. This implies for example that in general filling a hole does not decrease the $s-$perimeter, at least if the hole is ``large enough'', while of course this is always the case for the usual distributional perimeter. This behaviour is due to the fact that $P_s(E)$ is a sort of interpolation quantity between $P(E)$ and $|E|$ (see Corollary \ref{coro:relazioni}).
\par
Related is the question of uniqueness of $s-$Cheeger sets which also remains open. While Cheeger sets are known to be unique when $\Omega$ is convex, as proved in \cite{AC,CCN}, this is no longer clear in the nonlocal setting.

\subsection{Plan of the paper}
We start with Section \ref{sec:sobolev}, where we precise the functional analytic setting of our problem and we recall some facts about fractional Sobolev spaces that will be needed throughout the whole paper. Then in Section \ref{sec:first} we recall the definition of first eigenvalue $\lambda^s_{1,p}(\Omega)$ from \cite{LL} and prove that the associated first eigenfunctions are bounded, together with the Faber-Krahn inequality. Section \ref{sec:sperimeter} introduces the $s-$perimeter of a set, there we recall some connections between the naturally associated Sobolev space $W^{s,1}$ and the space of $BV$ functions. With Section \ref{sec:constant} we enter the core of the paper: we introduce problem \eqref{eq:cheeger} and prove some first properties. The remaining sections are then devoted to study regularity issues for $s-$Cheeger sets (Section \ref{sec:reg}), the relation between the first eigenvalues $\lambda^s_{1,p}(\Omega)$ and the $s-$Cheeger constant (Section \ref{sec:lim}) and the alternative 
characterization \eqref{inverse} (Section \ref{sec:maxmin}). Three appendices containing some technical results complement the paper.
\begin{ack}
We thank Pierre Bousquet, Peter Lindqvist and Jean Van Schaftingen for some useful discussions. Giampiero Palatucci kindly provided us a preprint version of \cite{FP}. Part of this work has been done during a visit of the second author to Aix-Marseille Universit\'e. The LATP institution and its facilities are kindly acknowledged.
\end{ack}

\section{A glimpse on fractional Sobolev spaces}
\label{sec:sobolev}

Here and throughout the whole paper we will use the notation $B_r(x_0)$ to denote the open ball of $\mathbb{R}^N$ centered at $x_0$ and with radius $r>0$. Moreover, we will denote by $\omega_k$ the measure of the $k-$dimensional ball with unit radius.
\vskip.2cm\noindent
Given $p\in[1,\infty)$ and $s\in(0,1)$, let us denote by 
\begin{equation}
\label{ps}
[u]_{W^{s,p}(\mathbb{R}^N)}=\left(\int_{\mathbb{R}^N}\int_{\mathbb{R}^N} \frac{|u(x)-u(y)|^p}{|x-y|^{N+s\,p}} \, dx\,dy\right)^\frac{1}{p},
\end{equation}
the $(s,p)$ Gagliardo seminorm in $\mathbb{R}^N$ of a measurable function $u$. Given an open and bounded set $\Omega\subset\mathbb{R}^N$, we first observe that we have
\[
[u]_{W^{s,p}(\mathbb{R}^N)}<+\infty,\qquad \mbox{ for every }u\in C^\infty_0(\Omega).
\] 
We then precise the Sobolev space we want to work with. 
\begin{defi}
The space $\widetilde W^{s,p}_0(\Omega)$ is defined as the closure of $C^\infty_0(\Omega)$ with respect to the norm 
\[
u\mapsto [u]_{W^{s,p}(\mathbb{R}^N)}+\|u\|_{L^p(\Omega)}.
\]
This is a Banach space, which is reflexive for $1<p<\infty$.
\end{defi}
In this paper we will deal with variational problems in the limit case $p=1$, where $\widetilde W^{s,1}_0(\Omega)$ is not reflexive. In this case, we will need the following larger Sobolev space.
\begin{defi}
The space $\mathcal{W}^{s,1}_0(\Omega)$ is defined by
\[
\mathcal{W}^{s,1}_0(\Omega)=\left\{u\in L^1(\Omega)\, :\, [u]_{W^{s,1}(\mathbb{R}^N)}<+\infty \mbox{ and } u=0 \mbox{ a.e. in } \mathbb{R}^N\setminus\Omega\right\}.
\]
Of course, we have $\widetilde W^{s,1}_0(\Omega)\subset \mathcal{W}^{s,1}_0(\Omega)$.
\end{defi}
The following approximation result in $\mathcal{W}^{s,1}_0(\Omega)$ is valid under smoothness assumptions on $\Omega$ and will be quite useful in Section \ref{sec:constant} and \ref{sec:lim}. 
\begin{lm}
\label{lm:LS}
Let $\Omega\subset\mathbb{R}^N$ be an open and bounded Lipschitz set and $s\in(0,1)$. For every $u\in \mathcal{W}^{s,1}_0(\Omega)$ there exists a sequence $\{\varphi_n\}_{n\in\mathbb{N}}\subset C^\infty_0(\Omega)$ such that
\begin{equation}
\label{LS}
\lim_{n\to\infty} \|\varphi_n-u\|_{L^1(\Omega)}=0\qquad \mbox{ and }\qquad \lim_{n\to\infty} [\varphi_n]_{W^{s,1}(\mathbb{R}^N)}=[u]_{W^{s,1}(\mathbb{R}^N)}.
\end{equation}
\end{lm}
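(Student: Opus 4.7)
The plan is a two-step procedure: first construct, for each small $t > 0$, an intermediate approximation $v_t$ of $u$ whose support is a compact subset of $\Omega$ and which is close to $u$ both in $L^1$ and in the Gagliardo seminorm; then mollify $v_t$ inside $\Omega$. A diagonal argument produces the desired $C^\infty_0(\Omega)$ sequence.

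The key idea for the first step is to use the Lipschitz regularity of $\partial\Omega$ to build a \emph{transverse vector field}. Via a finite partition of unity and local flattening of the boundary, construct a bounded Lipschitz vector field $X : \mathbb{R}^N \to \mathbb{R}^N$ with $X \cdot \nu > 0$ almost everywhere on $\partial\Omega$ (taking $X = -e_N$ in local coordinates where $\Omega$ is the epigraph of a Lipschitz function of $x'$). Set $\Psi_t(x) := x + t\, X(x)$. For $t$ small enough, $\Psi_t$ is a bi-Lipschitz homeomorphism of $\mathbb{R}^N$ whose Lipschitz constants tend to $1$ as $t \to 0$, and, by transversality, $\Psi_t(\Omega) \supset \overline\Omega$ with a positive gap. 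Then $v_t(x) := u(\Psi_t(x))$ is supported in $\Psi_t^{-1}(\overline\Omega) \subset\subset \Omega$.

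Convergence $v_t \to u$ in $L^1(\Omega)$ follows by uniform continuity of $\Psi_t \to \mathrm{id}$ together with a density argument. For the seminorm, perform the change of variables $a = \Psi_t(x)$, $b = \Psi_t(y)$ in the double integral; the resulting integrand
\[
\frac{|u(a)-u(b)|}{|\Psi_t^{-1}(a)-\Psi_t^{-1}(b)|^{N+s}}\, |\det D\Psi_t^{-1}(a)|\, |\det D\Psi_t^{-1}(b)|
\]
is dominated by a multiple of $|u(a)-u(b)|/|a-b|^{N+s}$, thanks to the bi-Lipschitz bounds on $\Psi_t$, and converges pointwise a.e.\ to $|u(a)-u(b)|/|a-b|^{N+s}$. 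Dominated convergence then gives $[v_t]_{W^{s,1}(\mathbb{R}^N)} \to [u]_{W^{s,1}(\mathbb{R}^N)}$.

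For the second step, since $v_t$ has compact support at positive distance $d_t$ from $\partial\Omega$, the mollification $v_t * \rho_\epsilon$ lies in $C^\infty_0(\Omega)$ for $\epsilon < d_t$. Minkowski's inequality applied inside the Gagliardo integral yields $[v_t * \rho_\epsilon]_{W^{s,1}} \leq [v_t]_{W^{s,1}}$, and $v_t * \rho_\epsilon \to v_t$ in $L^1$; combined with lower semicontinuity of the Gagliardo seminorm under $L^1$ convergence (immediate from Fatou), this forces $[v_t * \rho_\epsilon]_{W^{s,1}} \to [v_t]_{W^{s,1}}$. A diagonal selection $\varphi_n := v_{t_n} * \rho_{\epsilon_n}$ with $t_n, \epsilon_n \to 0$ completes the proof. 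The main obstacle is the first step: lower semicontinuity is automatic, so the real point is the $\limsup$ bound, which requires the bi-Lipschitz family $\Psi_t$ with constants tending to $1$ in order to control the nonlocal quotient in the integrand by an integrable dominant—and it is precisely here that the Lipschitz hypothesis on $\partial\Omega$ is used.
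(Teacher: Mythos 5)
Your proof is correct and follows essentially the same route as the paper's: compose $u$ with a bi-Lipschitz deformation that pulls the support compactly into $\Omega$ (the paper imports this family of diffeomorphisms from Littig--Schuricht, where you construct it via a transverse vector field), then mollify, using Fatou for the $\liminf$, Jensen/Minkowski to show mollification does not increase the seminorm, and dominated convergence via the uniform bi-Lipschitz bounds for the $\limsup$. No gaps.
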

\begin{proof}
The proof is based on the construction of \cite[Lemma 3.2]{LS}. Indeed, by this result we know that under the standing assumptions on $\Omega$ there exists a family of diffeomorphisms $\Phi_\varepsilon:\mathbb{R}^N\to\mathbb{R}^N$ with inverses $\Psi_\varepsilon$ such that:
\begin{itemize}
\item we have
\[
\lim_{\varepsilon\to 0^+} \|D\Phi_\varepsilon-\mathrm{Id}\|_{L^\infty}+\|\Phi_\varepsilon-\mathrm{Id}\|_{L^\infty}=0\quad \mbox{ and }\quad \lim_{\varepsilon\to 0^+} \|D\Psi_\varepsilon-\mathrm{Id}\|_{L^\infty}+\|\Psi_\varepsilon-\mathrm{Id}\|_{L^\infty}=0;
\]
\item $\Omega_\varepsilon:=\Phi_\varepsilon(\overline\Omega)\Subset\Omega$ for all $\varepsilon\ll1$.
\end{itemize}
We then define the sequence $\varphi_n=(u\circ\Psi_{1/n})\ast\varrho_n$, where $\varrho_n$ is a standard convolution kernel such that $\|\varrho_n\|_{L^1}=1$. By construction $\varphi_n\in C^\infty_0(\Omega)$ and the first property in \eqref{LS} is easily verified. Observe that by Fatou Lemma, this also implies that
\[
\liminf_{n\to\infty} [\varphi_n]_{W^{s,1}(\mathbb{R}^N)}\ge [u]_{W^{s,1}(\mathbb{R}^N)},
\]
then in order to conclude we just need to prove the upper semicontinuity of the seminorm. We first observe that
\begin{equation}
\label{jensen}
[\varphi_n]_{W^{s,1}(\mathbb{R}^N)}=[(u\circ\Psi_{1/n})\ast\varrho_n]_{W^{s,1}(\mathbb{R}^N)}\le [u\circ\Psi_{1/n}]_{W^{s,1}(\mathbb{R}^N)},
\end{equation}
then the latter can be written as
\[
[u\circ\Psi_{1/n}]_{W^{s,1}(\mathbb{R}^N)}=\int_{\mathbb{R}^N}\int_{\mathbb{R}^N} \frac{|u(z)-u(w)|}{|\Phi_{1/n}(z)-\Phi_{1/n}(w)|}\, |J\Phi_{1/n}(z)|\,|J\Phi_{1/n}(w)|\, dz\, dw, 
\]
by a simple change of variables $(z,w)=(\Psi_{1/n}(x),\Psi_{1/n}(y))$, where $J\Phi_{1/n}$ denotes the Jacobian determinant. We now observe that by construction
\[
|\Phi_{1/n}(z)-\Phi_{1/n}(w)|\ge M_1\, |z-w|\qquad \mbox{ and }\qquad |J\Phi_{1/n}(z)|\le M_2,
\]
for some $M_1>0$ and $M_2\ge 1$ independent of $n$. By applying Lebesgue Dominated Convergence Theorem and keeping into account \eqref{jensen}, we can conclude.
\end{proof}
We now prove a Poincar\'e--type inequality for Gagliardo seminorms.
\begin{lm}
\label{lm:poincare}
Let $1\le p<\infty$ and $s\in(0,1)$, $\Omega\subset\mathbb{R}^N$ be an open and bounded set. There holds
\begin{equation}
\label{poincare}
\|u\|^p_{L^p(\Omega)}\le \mathcal{I}_{N,s,p}(\Omega)\, 
[u]^p_{W^{s,p}(\mathbb{R}^N)},\qquad \mbox{ for every }\varphi\in C^\infty_0(\Omega),
\end{equation}
where the geometric quantity $\mathcal{I}_{N,s,p}(\Omega)$ is defined by
\begin{equation}
\label{cazzo}
\mathcal{I}_{N,s,p}(\Omega)=\min \left\{\frac{\mathrm{diam}(\Omega\cup B)^{N+s\,p}}{|B|}\, :\, B\subset \mathbb{R}^N\setminus\Omega \mbox{ is a ball }\right\}.
\end{equation}
\end{lm}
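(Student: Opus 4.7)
The idea is to exploit the fact that any $\varphi\in C^\infty_0(\Omega)$ vanishes outside $\Omega$, in particular on any ball $B\subset\mathbb{R}^N\setminus\Omega$ (such balls exist, as $\Omega$ is bounded). This lets us turn the single integral $\int_\Omega|\varphi|^p\,dx$ into a double integral on $\Omega\times B$ and then compare it with the Gagliardo double integral on $\mathbb{R}^N\times\mathbb{R}^N$.

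More precisely, fix any ball $B\subset\mathbb{R}^N\setminus\Omega$. For every $x\in\Omega$ and $y\in B$ one has $\varphi(y)=0$, so $|\varphi(x)-\varphi(y)|^p=|\varphi(x)|^p$, and moreover $|x-y|\le \mathrm{diam}(\Omega\cup B)$. Integrating first in $y\in B$ and then in $x\in\Omega$ yields
\[
\int_\Omega|\varphi(x)|^p\,dx\;=\;\frac{1}{|B|}\int_\Omega\int_B|\varphi(x)-\varphi(y)|^p\,dy\,dx\;\le\;\frac{\mathrm{diam}(\Omega\cup B)^{N+s\,p}}{|B|}\int_\Omega\int_B\frac{|\varphi(x)-\varphi(y)|^p}{|x-y|^{N+s\,p}}\,dy\,dx.
\]
Since $\Omega\times B\subset\mathbb{R}^N\times\mathbb{R}^N$, the last double integral is bounded from above by $[\varphi]_{W^{s,p}(\mathbb{R}^N)}^p$, which gives
\[
\|\varphi\|_{L^p(\Omega)}^p\;\le\;\frac{\mathrm{diam}(\Omega\cup B)^{N+s\,p}}{|B|}\,[\varphi]_{W^{s,p}(\mathbb{R}^N)}^p.
\]
Minimizing over all balls $B\subset \mathbb{R}^N\setminus\Omega$ then produces \eqref{poincare}.

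The only point requiring a small verification is that the infimum defining $\mathcal{I}_{N,s,p}(\Omega)$ is actually a minimum. This follows from a routine compactness argument: if the radius of $B$ tends to $0$, the numerator stays bounded away from $0$ (it dominates $\mathrm{diam}(\Omega)^{N+s\,p}$) while the denominator vanishes; if instead the radius or the distance of $B$ from $\Omega$ tends to infinity, one has $\mathrm{diam}(\Omega\cup B)\ge 2r$ (with $r$ the radius of $B$), hence the ratio is bounded below by a constant multiple of $r^{s\,p}$ and diverges. Therefore any minimizing sequence of balls stays in a compact family, and the continuity of the functional $B\mapsto \mathrm{diam}(\Omega\cup B)^{N+s\,p}/|B|$ allows us to conclude. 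I do not foresee any serious obstacle in this argument; the main point is just the observation that the vanishing of $\varphi$ on a far-away ball $B$ makes the Gagliardo integrand on $\Omega\times B$ essentially reproduce $|\varphi|^p$.
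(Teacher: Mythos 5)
Your proof is correct and follows essentially the same route as the paper's: both fix a ball $B\subset\mathbb{R}^N\setminus\Omega$, use the vanishing of $\varphi$ on $B$ to rewrite $\int_\Omega|\varphi|^p\,dx$ as a double integral over $\Omega\times B$, bound $|x-y|$ by $\mathrm{diam}(\Omega\cup B)$, and compare with the full Gagliardo seminorm. The paper does not address the attainment of the minimum in \eqref{cazzo} at all, so your compactness remark is a harmless extra (and in any case the inequality already follows by taking the infimum over admissible balls).
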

\begin{proof}
Let $u\in C^\infty_0(\Omega)$ and $B_R\subset\mathbb{R}^N\setminus\Omega$, i.e. a ball of radius $R$ contained in the complement of $\Omega$. For every
$x\in\Omega$ and $y\in B_R$ we then have
\[
|u(x)|^p=\frac{|u(x)-u(y)|^p}{|x-y|^{N+s\,p}}\, |x-y|^{N+s\,p},
\]
from which we can infer
\[
\begin{split}
|B_R|\, |u(x)|^p 
\le \sup_{x\in\Omega, y\in B_R}\,|x-y|^{N+s\,p}\,\int_{B_R} \frac{|u(x)-u(y)|^p}{|x-y|^{N+s\,p}}\, dy.
\end{split}
\]
Integrating on $\Omega$ with respect to $x$ we obtain
\[
\int_\Omega  |u|^p \,dx \le \frac{\mathrm{diam}(\Omega\cup B_R)^{N+s\,p}}{|B_R|} \int_\Omega \int_{B_R} \frac{|u(x)-u(y)|^p}{|x-y|^{N+s\,p}}\, dx\,dy, 
\]
which concludes the proof.
\end{proof}
\begin{oss}
The previous result shows that for an open and bounded set $\Omega\subset\mathbb{R}^N$ the space $\widetilde W^{s,p}_0(\Omega)$ can be equivalently defined as the closure of $C^\infty_0(\Omega)$ with respect to the seminorm $[\,\cdot\,]_{W^{s,p}(\mathbb{R}^N)}$.
\end{oss}
In view of the previous remark, in what follows we will always consider the space $\widetilde W^{s,p}_0(\Omega)$ as equipped with the equivalent norm
\begin{equation}
\label{normetta}
\|u\|_{\widetilde W^{s,p}_0(\Omega)}:=[u]_{W^{s,p}(\mathbb{R}^N)},\qquad u\in \widetilde W^{s,p}_0(\Omega).
\end{equation}
We will also define the space $W^{s,p}_0(\mathbb{R}^N)$ as the closure of $C^\infty_0(\mathbb{R}^N)$ with respect to the norm $[\,\cdot\,]_{W^{s,p}(\mathbb{R}^N)}$.
Then it is immediate to see that the application 
$$
i:\widetilde W^{s,p}_0(\Omega)\to W^{s,p}_0(\mathbb{R}^N),
$$ 
which associates to each $u\in \widetilde W^{s,p}_0(\Omega)$ its extension by $0$ to the whole $\mathbb{R}^N$ is well-defined and continuous. 
\vskip.2cm\noindent
Next, we investigate the behaviour of fractional Sobolev spaces under varying $p$.
\begin{lm}
\label{lm:sommabilità}
Let $\Omega\subset\mathbb{R}^N$ be an open and bounded set. Let $1\le q\le p<\infty$ and $s\in(0,1)$, then for every $0<\varepsilon<s$ we have
\[
[u]_{W^{s-\varepsilon,q}(\mathbb{R}^N)}\le \frac{C}{\varepsilon\,(s-\varepsilon)}\, [u]_{W^{s,p}(\mathbb{R}^N)},\qquad \mbox{ for every }u\in C^\infty_0(\Omega),
\]
where $C=C(N,\Omega,s,p,q)>0$.
\end{lm}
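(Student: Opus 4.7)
The plan is to estimate the double integral defining $[u]_{W^{s-\varepsilon,q}(\mathbb{R}^N)}^q$ by splitting it into a \emph{near-field} part on $\{|x-y|\le 1\}$ and a \emph{far-field} part on $\{|x-y|>1\}$, and bounding each piece separately by $[u]_{W^{s,p}(\mathbb{R}^N)}^q$, picking up the singular factors $1/(s-\varepsilon)$ and $1/\varepsilon^{(p-q)/p}$ respectively. For the far field, I would use the elementary inequality $|a-b|^q\le 2^{q-1}(|a|^q+|b|^q)$ together with the fact that $u=0$ outside $\Omega$ to decouple the two variables, arriving at
\[
\iint_{|x-y|>1}\frac{|u(x)-u(y)|^q}{|x-y|^{N+(s-\varepsilon)q}}\,dx\,dy\le 2^q\,\|u\|_{L^q(\Omega)}^q\int_{|z|>1}\frac{dz}{|z|^{N+(s-\varepsilon)q}}\le \frac{C_1}{s-\varepsilon}\,\|u\|_{L^q(\Omega)}^q.
\]
Since $q\le p$, H\"older on $\Omega$ gives $\|u\|_{L^q(\Omega)}^q\le |\Omega|^{(p-q)/p}\|u\|_{L^p(\Omega)}^q$, and then Lemma \ref{lm:poincare} converts $\|u\|_{L^p(\Omega)}$ into $[u]_{W^{s,p}(\mathbb{R}^N)}$, producing a constant depending on $N,\Omega,s,p,q$.

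For the near field I would apply H\"older with exponents $p/q$ and $p/(p-q)$ after the factorisation
\[
\frac{|u(x)-u(y)|^q}{|x-y|^{N+(s-\varepsilon)q}}=\frac{|u(x)-u(y)|^q}{|x-y|^{(N+sp)q/p}}\cdot|x-y|^{(N+sp)q/p-N-(s-\varepsilon)q},
\]
so that the first factor raised to $p/q$ is exactly the integrand defining $[u]_{W^{s,p}(\mathbb{R}^N)}^p$ and the second raised to $p/(p-q)$ equals $|x-y|^{-N+\varepsilon pq/(p-q)}$. Since $u$ vanishes outside $\Omega$, by symmetry one may replace the domain $\{|x-y|\le 1\}$ by $\{x\in \Omega,\,|x-y|\le 1\}$ at the cost of an extra factor $2$; after this reduction the integral of the second factor is
\[
\int_\Omega dx\int_{|z|\le 1}|z|^{-N+\varepsilon pq/(p-q)}\,dz=|\Omega|\,\frac{N\omega_N(p-q)}{\varepsilon\,p\,q},
\]
and raising to the power $(p-q)/p$ gives a factor of order $\varepsilon^{-(p-q)/p}$. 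The case $q=p$ is handled directly, since then $|x-y|^{\varepsilon p}\le 1$ on $\{|x-y|\le 1\}$ and no H\"older is needed.

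Combining the two estimates yields
\[
[u]_{W^{s-\varepsilon,q}(\mathbb{R}^N)}^q\le C\left(\frac{1}{s-\varepsilon}+\frac{1}{\varepsilon^{(p-q)/p}}\right)[u]_{W^{s,p}(\mathbb{R}^N)}^q,
\]
with $C=C(N,\Omega,s,p,q)$. Using $\varepsilon<s<1$ one has $\varepsilon^{-(p-q)/p}\le \varepsilon^{-1}$, and the sum in parentheses is bounded by $2s/(\varepsilon(s-\varepsilon))\le 2/(\varepsilon(s-\varepsilon))$. Taking $q$-th roots and exploiting $\varepsilon(s-\varepsilon)\le 1$ (so that the exponent $1/q\le 1$ only makes the factor smaller) yields the stated bound.

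The genuinely delicate step is the near-field H\"older estimate: without using the support property of $u$, the integral of $|x-y|^{-N+\varepsilon pq/(p-q)}$ would diverge at infinity in $x$, so one must carefully restrict to a region where one of the two variables lies in $\Omega$ before decoupling. All the other ingredients (elementary inequalities, Poincar\'e's inequality from Lemma~\ref{lm:poincare}, and the crude bookkeeping to merge the two singular factors) are routine once this reduction is in place.
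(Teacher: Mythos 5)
Your proof is correct, and the far-field part, the use of H\"older on $\Omega$, and the final appeal to the Poincar\'e inequality of Lemma \ref{lm:poincare} coincide with what the paper does. The genuine difference is in the near-field estimate. The paper first freezes the increment $h=y-x$, applies H\"older in $x$ alone over the compact set $\bigcup_{|h|\le 1}(\Omega+h)$, and then invokes the uniform Nikolskii-type bound of Lemma \ref{lm:nikolski}, namely $\sup_{|h|>0}\int|u(x+h)-u(x)|^p\,|h|^{-sp}\,dx\le C\,[u]^p_{W^{s,p}(\mathbb{R}^N)}$, before integrating $|h|^{-N+\varepsilon q}\,dh$ over $|h|\le 1$; that lemma is itself a nontrivial mollification argument relegated to Appendix A. You instead apply H\"older once on the product space, with exponents $p/q$ and $p/(p-q)$, after restricting (by the support of $u$ and the symmetry of the integrand, at the cost of a factor $2$) to $\{x\in\Omega,\ |x-y|\le 1\}$ so that the weight $|x-y|^{-N+\varepsilon pq/(p-q)}$ is integrable; your exponent bookkeeping checks out. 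This makes the lemma self-contained, bypassing Lemma \ref{lm:nikolski} entirely (though the paper needs that lemma anyway for the compactness Theorem \ref{teo:compact}), and it even yields the slightly better near-field singularity $\varepsilon^{-(p-q)/p}$ in place of $\varepsilon^{-1}$, with no singularity at all when $q=p$; the paper's route, by contrast, reuses a tool already established for other purposes. The only cosmetic point is the very last step: to pass from the bound on $[u]^q_{W^{s-\varepsilon,q}}$ to the stated bound on $[u]_{W^{s-\varepsilon,q}}$ you should note that the constant can be taken $\ge 1$, so that $X^{1/q}\le X$ applies; the paper's own statement implicitly relies on the same observation.
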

\begin{proof}
Let $u\in C^\infty_0(\Omega)$, by a simple change of variables and using the invariance by translations of $L^p$ norms, 
we have
\[
\begin{split}
[u]^q_{W^{s-\varepsilon,q}(\mathbb{R}^N)}&=\int_{\{h\, :\, |h|>1\}}\int_{\mathbb{R}^N} \frac{|u(x+h)-u(x)|^q}{|h|^{N+(s-\varepsilon)\,q}}\, dx\,dh\\
&+\int_{\{h\, :\, |h|\le 1\}}\int_{\mathbb{R}^N} \frac{|u(x+h)-u(x)|^q}{|h|^{N+(s-\varepsilon)\,q}}\, dx\,dh\\
&\le \frac{2^{q-1}\,N\,\omega_N}{(s-\varepsilon)\,q} \int_{\mathbb{R}^N} |u|^q\, dx+\int_{\{|h|\le 1\}}\left(\int_{\mathbb{R}^N} \frac{|u(x+h)-u(x)|^q}{|h|^{s\,q}}\, dx\right)\frac{dh}{|h|^{N-\varepsilon\,q}}. 
\end{split}
\]
We then observe that
\[
\int_{\mathbb{R}^N} |u|^q\, dx=\int_\Omega |u|^q\, dx\le |\Omega|^{1-\frac{q}{p}}\, \left(\int_\Omega |u|^p\, dx\right)^\frac{q}{p},
\]
and for every $|h|\le 1$, since the function $u(x+h)-u(x)$ has compact support\footnote{More precisely, observe that the support of this function is contained in the open and bounded set $\bigcup\limits_{|h|\le 1} (\Omega+h)$.}, we get
\[
\begin{split}
\int_{\mathbb{R}^N} \frac{|u(x+h)-u(x)|^q}{|h|^{s\,q}}\, dx&\le C_{\Omega,p,q}\,\left(\int_{\mathbb{R}^N} \frac{|u(x+h)-u(x)|^p}{|h|^{s\,p}}\, dx\right)^\frac{q}{p}\le C'\, [u]^q_{W^{s,p}(\mathbb{R}^N)},
\end{split}
\]
where in the last inequality we used Lemma \ref{lm:nikolski}. Putting everything together, we have obtained
\[
[u]^q_{W^{s-\varepsilon,q}(\mathbb{R}^N)}\le \frac{2^{q-1}\,N\,\omega_N}{(s-\varepsilon)\,q}\,|\Omega|^{1-\frac{q}{p}}\, \left(\int_{\Omega} |u|^p\, dx\right)^\frac{q}{p}+\frac{C'\, N\,\omega_N}{\varepsilon\,q}\, [u]^q_{W^{s,p}(\mathbb{R}^N)}.
\]
By using Poincar\'e inequality \eqref{poincare} in the previous, we get the conclusion.
\end{proof}
\begin{teo}
\label{teo:compact}
Let $1\le p<\infty$ and $s\in(0,1)$, let $\Omega\subset\mathbb{R}^N$ be an open and bounded set. Let $\{u_n\}_{n\in\mathbb{N}}\subset \widetilde W^{s,p}_0(\Omega)$ be a bounded sequence, i.e.
\begin{equation}
\label{ipotesi}
\sup_{n\in\mathbb{N}}\, \|u_{n}\|_{\widetilde W^{s,p}_0(\Omega)}<+\infty.
\end{equation}
Then there exists a subsequence $\{u_{n_k}\}_{k\in\mathbb{N}}$ converging in $L^p(\Omega)$ to a function $u$. Moreover, if $p>1$ then $u\in \widetilde W^{s,p}_0(\Omega)$, while for $p=1$ we have $u\in \mathcal{W}^{s,1}_0(\Omega)$.
\end{teo}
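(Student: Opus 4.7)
The plan is to verify the Fr\'echet--Kolmogorov criterion for relative compactness in $L^p(\mathbb{R}^N)$, applied to the sequence $\{u_n\}$ extended by $0$ outside $\Omega$. The Poincar\'e inequality of Lemma \ref{lm:poincare} combined with \eqref{ipotesi} gives the uniform bound $\sup_n \|u_n\|_{L^p(\mathbb{R}^N)}<+\infty$, while tightness is automatic since every $u_n$ vanishes outside the bounded set $\overline{\Omega}$. The only non-trivial point is therefore the uniform equicontinuity of translations,
$$
\lim_{|h|\to 0}\,\sup_{n\in\mathbb{N}}\,\|\tau_h u_n - u_n\|_{L^p(\mathbb{R}^N)}=0.
$$

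To produce this modulus of continuity from the Gagliardo seminorm I would regularize by a flat kernel: set $\psi_\rho := |B_\rho|^{-1}\,1_{B_\rho}$ and $v_n^\rho := u_n\ast \psi_\rho$. Jensen's inequality gives
$$
\|v_n^\rho-u_n\|_{L^p(\mathbb{R}^N)}^p \le \frac{1}{|B_\rho|}\int_{B_\rho}\|\tau_h u_n-u_n\|_{L^p(\mathbb{R}^N)}^p\,dh,
$$
and using the elementary bound $1 \le \rho^{N+s\,p}/|h|^{N+s\,p}$ for $|h|\le \rho$ together with Fubini converts the right-hand side into $C\,\rho^{s\,p}\,[u_n]_{W^{s,p}(\mathbb{R}^N)}^p$. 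For each fixed $\rho$, Young's inequality yields $\|\tau_h v_n^\rho - v_n^\rho\|_{L^p}\le \|u_n\|_{L^p}\,\|\tau_h\psi_\rho-\psi_\rho\|_{L^1}$, which tends to $0$ as $|h|\to 0$ uniformly in $n$; hence the family $\{v_n^\rho\}_n$ is relatively compact in $L^p(\mathbb{R}^N)$ by the classical Fr\'echet--Kolmogorov theorem. A diagonal extraction along $\rho=1/k$, combined with the triangle inequality and the approximation estimate above, yields a subsequence $\{u_{n_k}\}$ that is Cauchy in $L^p(\mathbb{R}^N)$, hence converges to some $u\in L^p(\mathbb{R}^N)$ supported in $\overline{\Omega}$.

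It remains to identify the functional space of the limit. Fatou's lemma applied to the Gagliardo integrand $|u(x)-u(y)|^p/|x-y|^{N+s\,p}$ along an a.e.\ convergent subsequence yields $[u]_{W^{s,p}(\mathbb{R}^N)}\le\liminf_k [u_{n_k}]_{W^{s,p}(\mathbb{R}^N)}<+\infty$; together with $u=0$ a.e.\ outside $\Omega$, this gives $u\in\mathcal{W}^{s,1}_0(\Omega)$ directly in the case $p=1$. For $p>1$ I would instead invoke the reflexivity of $\widetilde W^{s,p}_0(\Omega)$ to extract a weakly convergent subsequence, and identify its weak limit with the strong $L^p$ limit $u$ already obtained, giving $u\in\widetilde W^{s,p}_0(\Omega)$.

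The main obstacle is precisely the case $p=1$: reflexivity is unavailable, so one cannot argue via weak compactness, and the limit is only guaranteed to lie in the larger space $\mathcal{W}^{s,1}_0(\Omega)$ rather than in $\widetilde W^{s,1}_0(\Omega)$. This is why the statement distinguishes the two regimes, and why the compactness has to be obtained by the concrete Kolmogorov-type argument above rather than by a duality shortcut.
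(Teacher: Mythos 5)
Your proposal is correct, and while the overall skeleton coincides with the paper's (extend by zero, use the Poincar\'e inequality of Lemma \ref{lm:poincare} for the uniform $L^p$ bound, invoke a Fr\'echet--Kolmogorov type compactness, and use reflexivity for $p>1$), the key technical step is genuinely different. The paper verifies the translation condition \eqref{rfk} directly for $\{u_n\}$ by means of the pointwise-in-$h$ Nikolskii estimate of Lemma \ref{lm:nikolski},
\[
\sup_{|h|>0}\int_{\mathbb{R}^N}\frac{|u_n(x+h)-u_n(x)|^p}{|h|^{s\,p}}\,dx\le C\,[u_n]^p_{W^{s,p}(\mathbb{R}^N)},
\]
whose proof (Appendix \ref{app:embedding}) is itself a nontrivial mollification argument. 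You bypass that lemma entirely: you only use the $h$-\emph{averaged} translation estimate
\[
\frac{1}{|B_\rho|}\int_{B_\rho}\|\tau_h u_n-u_n\|^p_{L^p(\mathbb{R}^N)}\,dh\le C\,\rho^{s\,p}\,[u_n]^p_{W^{s,p}(\mathbb{R}^N)},
\]
which follows from the Gagliardo seminorm by nothing more than the bound $|h|^{-N-s\,p}\ge \rho^{-N-s\,p}$ on $B_\rho$ and Fubini, and you recover compactness by approximating with the flat mollifications $v_n^\rho=u_n\ast\psi_\rho$ and a diagonal extraction. Both routes are sound; yours is more elementary and self-contained for the purposes of this theorem, whereas the paper's stronger Nikolskii estimate is reused elsewhere (e.g.\ in Lemma \ref{lm:sommabilità}), so it is not wasted effort there. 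One could also shorten your argument slightly by combining your two estimates into a single modulus $\|\tau_h u_n-u_n\|_{L^p}\le 2C\rho^{s}\sup_n[u_n]_{W^{s,p}}+\|u_n\|_{L^p}\|\tau_h\psi_\rho-\psi_\rho\|_{L^1}$ and applying Fr\'echet--Kolmogorov once to $\{u_n\}$ itself, rather than to each mollified family. Finally, your identification of the limit for $p=1$ via Fatou's lemma is correct and in fact more explicit than the paper's proof, which only comments on the reflexive case $p>1$.
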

\begin{proof}
We first observe that the sequence $\{u_n\}_{n\in\mathbb{N}}$ is bounded in $L^p$ as well, thanks to \eqref{ipotesi} and the Poincar\'e inequality \eqref{poincare}. We then extend by zero the functions $u_n$ to the whole $\mathbb{R}^N$ and observe that in order to get the desired conclusion, by the classical Riesz-Fr\'echet-Kolmogorov compactness theorem we only have to check that
\begin{equation}
\label{rfk}
\lim_{|h|\to 0} \left(\sup_{n\in\mathbb{N}} \int_{\mathbb{R}^N} |u_n(x+h)-u(x)|^p\, dx\right)=0.
\end{equation}
By \eqref{ipotesi} and Lemma \ref{lm:nikolski} we get
\[
\begin{split}
\int_{\mathbb{R}^N} |u_n(x+h)-u_n(x)|^p\, dx&=|h|^{s\,p}\,\int_{\mathbb{R}^N} \frac{|u_n(x+h)-u_n(x)|^p}{|h|^{s\, p}}\, dx\le C\, |h|^{s\,p}\, [u]^p_{W^{s,p}(\mathbb{R}^N)}\le \widetilde C\, |h|^{s\, p},
\end{split}
\]
for every $|h|<1$. The previous estimate implies \eqref{rfk} and this gives the desired conclusion. Finally, the last statement is a consequence of the reflexivity of $\widetilde W^{s,p}_0(\Omega)$ for $p>1$.
\end{proof}
More generally we get the compactness of the embedding in $L^q$ spaces for suitable $q$.
\begin{coro}
\label{coro:compact}
For $1\le p<\infty$ and $s\in(0,1)$, let $\Omega\subset\mathbb{R}^N$ be an open and bounded set. Every bounded sequence $\{u_n\}_{n\in\mathbb{N}}\subset\widetilde  W^{s,p}_0(\Omega)$ admits a
subsequence converging in $L^q(\Omega)$ to a function $u$, for every $q\ge 1$ such that 
\[
q<p^*:=\left\{\begin{array}{cc}
\displaystyle\frac{N\,p}{N-s\,p},& \mbox{ if }s\,p<N,\\
+\infty,& \mbox{ if } s\,p\ge N.
\end{array}
\right.
\]
\end{coro}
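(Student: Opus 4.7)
The plan is to combine Theorem \ref{teo:compact}, which already produces an $L^p(\Omega)$-convergent subsequence, with a fractional Sobolev embedding and a H\"older interpolation between $L^p$ and a higher $L^r$ norm. The case $1 \le q \le p$ is immediate, since boundedness of $\Omega$ yields a continuous inclusion $L^p(\Omega)\hookrightarrow L^q(\Omega)$, and the conclusion then follows at once from Theorem \ref{teo:compact}. Hence the interesting range is $p < q < p^*$.

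Extending each $u_n$ by zero to $\mathbb{R}^N$, assumption \eqref{ipotesi} gives a uniform bound on $[u_n]_{W^{s,p}(\mathbb{R}^N)}$. The key ingredient I would invoke is the fractional Sobolev inequality
\[
\|u\|_{L^{p^*}(\mathbb{R}^N)} \le C\, [u]_{W^{s,p}(\mathbb{R}^N)},\qquad \text{valid when } sp<N,
\]
together with the corresponding embeddings $\widetilde W^{s,p}_0(\Omega)\hookrightarrow L^r(\Omega)$ for every finite $r$ in the critical case $sp=N$, and $r=\infty$ in the supercritical case $sp>N$. For any prescribed $q < p^*$, I would then fix an auxiliary exponent $r \in (q,p^*]$ for which the relevant embedding holds. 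Combined with \eqref{ipotesi}, this makes $\{u_n\}$ bounded in $L^r(\Omega)$.

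By Theorem \ref{teo:compact} I next extract a subsequence converging to some $u$ in $L^p(\Omega)$; passing to a further subsequence I may assume $u_n\to u$ a.e.\ in $\Omega$, so Fatou also gives $u\in L^r(\Omega)$. The conclusion then follows from the standard interpolation inequality
\[
\|u_n-u\|_{L^q(\Omega)} \le \|u_n-u\|_{L^p(\Omega)}^{\theta}\, \|u_n-u\|_{L^r(\Omega)}^{1-\theta},\qquad \frac{1}{q}=\frac{\theta}{p}+\frac{1-\theta}{r},
\]
with $\theta\in(0,1)$: the first factor vanishes by Theorem \ref{teo:compact}, while the second remains bounded. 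I do not expect any genuine obstacle here; the only mild subtlety is the choice of the auxiliary exponent $r$ in the borderline cases $sp\ge N$. Once the appropriate variant of the Sobolev inequality is invoked, the bootstrap from $L^p$-compactness to $L^q$-compactness is entirely routine.
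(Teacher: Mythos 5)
Your argument is correct and follows essentially the same route as the paper: the easy case $q\le p$ via the inclusion $L^p(\Omega)\hookrightarrow L^q(\Omega)$, and the case $p<q<p^*$ via the fractional Sobolev inequality combined with the interpolation inequality between $L^p$ and a higher Lebesgue exponent, with the $L^p$-compactness coming from Theorem \ref{teo:compact}. Your explicit handling of the borderline cases $s\,p\ge N$ through an auxiliary exponent $r\in(q,p^*]$ is slightly more careful than the paper's, which only treats $s\,p<N$ ``for simplicity''.
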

\begin{proof}
For $1\le q<p$, we can use Theorem \ref{teo:compact} in conjunction with
\[
\|u\|_{L^q(\Omega)}\le |\Omega|^{\frac{1}{q}-\frac{1}{p}}\, \|u\|_{L^p(\Omega)}.
\]
For $p<q$ it is sufficient to combine the standard interpolation inequality (suppose for simplicity that $s\,p<N$)
\[
\|u\|_{L^q(\Omega)}\le \|u\|^\vartheta_{L^{p^*}(\Omega)}\, \|u\|^{1-\vartheta}_{L^p(\Omega)},\qquad \mbox{ with }\vartheta=\frac{p^*}{q}\frac{q-p}{p^*-p},
\]
and the Sobolev inequality in $W^{s,p}_0(\mathbb{R}^N)$ (see \cite{FS})
with Theorem \ref{teo:compact}.
\end{proof}
For completeness, we conclude this section by considering the case $s\, p>N$. The proof is the same as in \cite[Theorem 8.2]{DPV}, the only difference is that here we work with the narrower space $\widetilde W^{s,p}_0(\Omega)$, so boundary issues can be disregarded. 
\begin{prop}
\label{teo:holderian}
Let $\Omega\subset\mathbb{R}^N$ be an open and bounded set. Let $1< p<\infty$ and $s\in(0,1)$ be such that $s\, p>N$. Then for every $u\in \widetilde W^{s,p}_0(\Omega)$ there holds $u\in C^{0,\alpha}$ with $\alpha=s-N/p$. Moreover there exists a constant $\gamma_{N,s,p}>0$ such that we have the estimates
\begin{equation}
\label{holderian}
|u(x)-u(y)|\le \left(\gamma_{N,s,p}\, \|u\|_{\widetilde W^{s,p}_0(\Omega)}\right)\, |x-y|^\alpha,\qquad x,y\in\mathbb{R}^N,
\end{equation}
and
\begin{equation}
\label{limitata}
\|u\|_{L^\infty}\le \gamma_{N,s,p}\, \|u\|_{\widetilde W^{s,p}_0(\Omega)}\, \mathrm{diam}(\Omega)^{\alpha}.
\end{equation}
\end{prop}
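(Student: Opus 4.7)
The plan is to adapt the classical Morrey-type Campanato argument for fractional Sobolev spaces, exactly as in \cite[Theorem~8.2]{DPV}. The simplification offered by working in $\widetilde W^{s,p}_0(\Omega)$ is that extension by zero to $\mathbb{R}^N\setminus\Omega$ is automatic, so the boundary extension procedure needed in \cite{DPV} disappears. By density it suffices to prove the two estimates for $u\in C^\infty_0(\Omega)$, viewed as an element of $C^\infty_0(\mathbb{R}^N)$ with the same Gagliardo seminorm, and then pass to the limit along an approximating sequence.

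The core step is a Campanato-type oscillation estimate on balls: for every $x_0\in\mathbb{R}^N$ and $r>0$, denoting by $\overline u_{B_r(x_0)}=|B_r(x_0)|^{-1}\int_{B_r(x_0)}u\,dz$ the mean of $u$ over $B_r(x_0)$,
$$
\frac{1}{|B_r(x_0)|}\int_{B_r(x_0)}\bigl|u(z)-\overline u_{B_r(x_0)}\bigr|\,dz\le C\, r^{s-N/p}\,[u]_{W^{s,p}(\mathbb{R}^N)}.
$$
I would derive this by first bounding the left-hand side by the double average $|B_r(x_0)|^{-2}\int_{B_r(x_0)}\int_{B_r(x_0)}|u(z)-u(w)|\,dw\,dz$ via Jensen's inequality, then writing $|u(z)-u(w)|=\bigl(|u(z)-u(w)|\,|z-w|^{-(N+sp)/p}\bigr)\cdot|z-w|^{(N+sp)/p}$ and applying H\"older's inequality with conjugate exponents $p$ and $p/(p-1)$. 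The first factor reproduces $[u]_{W^{s,p}(\mathbb{R}^N)}/|B_r(x_0)|^{2/p}$, and the second is controlled by $(2r)^{(N+sp)/p}$ since $|z-w|\le 2r$ on $B_r(x_0)\times B_r(x_0)$; tracking powers of $r$ yields the exponent $(sp-N)/p=s-N/p$, which is strictly positive precisely because $sp>N$.

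Running this estimate along a dyadic sequence of concentric balls $B_{r/2^k}(x_0)$ and summing the resulting geometric series shows that $\{\overline u_{B_{r/2^k}(x_0)}\}_{k\in\mathbb{N}}$ is Cauchy, and at every Lebesgue point its limit is $u(x_0)$, with $|u(x_0)-\overline u_{B_r(x_0)}|\le C\,r^{s-N/p}\,[u]_{W^{s,p}(\mathbb{R}^N)}$. For two points $x,y\in\mathbb{R}^N$ with $r=|x-y|$ I would then use the triangle inequality through the averages on $B_{2r}(x)$ and $B_{2r}(y)$, comparing these two averages via one further application of the same H\"older computation on an enclosing ball of radius $3r$; this yields \eqref{holderian} on a full-measure set, hence everywhere after replacing $u$ by its continuous representative.

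Finally, \eqref{limitata} follows immediately: since $u\equiv 0$ outside $\Omega$, for any $x\in\mathbb{R}^N$ one picks $y\in\mathbb{R}^N\setminus\Omega$ with $|x-y|\le\mathrm{diam}(\Omega)$, so that $|u(x)|=|u(x)-u(y)|\le\gamma_{N,s,p}\,\mathrm{diam}(\Omega)^\alpha\,[u]_{W^{s,p}(\mathbb{R}^N)}$. This is exactly where the continuous inclusion $\widetilde W^{s,p}_0(\Omega)\hookrightarrow W^{s,p}_0(\mathbb{R}^N)$ pays off and bypasses the boundary complications of \cite{DPV}. The main technical obstacle is the careful bookkeeping of exponents in the H\"older step and the control of averages over distinct balls when passing from pointwise Lebesgue values to \eqref{holderian} for arbitrary $x,y$; both are routine within the Campanato framework, one just has to verify that the constant depends only on $N$, $s$ and $p$.
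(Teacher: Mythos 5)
Your proof is correct and follows essentially the same route as the paper: both rest on the Campanato-type oscillation estimate over balls, obtained by bounding the mean oscillation through the double average and H\"older's inequality against the Gagliardo seminorm, and both deduce \eqref{limitata} by choosing a comparison point outside the support of $u$. The only difference is that the paper stops at the estimate placing $u$ in the Campanato space $\mathcal{L}^{p,sp}$ and cites its isomorphism with $C^{0,\alpha}$, whereas you unfold that embedding by hand via the dyadic iteration of averages and Lebesgue points; this is a self-contained rendering of the same argument, not a different one.
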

\begin{proof}
By extending $u$ by $0$ to the whole $\mathbb{R}^N$, we can consider it as an element of $W^{s,p}_0(\mathbb{R}^N)$, then we take $x_0\in\mathbb{R}^N$, $\delta>0$ and estimate
\[
\int_{B_\delta(x_0)} |u(x)-\overline u_{x_0,\delta}|^p\, dx\le \frac{1}{|B_\delta(x_0)|}\, \int_{B_\delta(x_0)} \int_{B_\delta(x_0)} |u(x)-u(y)|^p\, dx\,dy,
\]
where $\overline u_{x_0,\delta}$ denotes the average of $u$ on $B_\delta(x_0)$. By observing that $|x-y|\le 2\, \delta$ for every $x,y\in B_\delta(x_0)$ and using that $B_\delta(x_0)=\omega_N\, \delta^N$, we get
\[
\int_{B_\delta(x_0)} |u(x)-\overline u_{x_0,\delta}|^p\, dx\le C\, \delta^{s\,p} \, [u]^p_{W^{s,p}(\mathbb{R}^N)},
\]
that is
\begin{equation}
\label{campanato}
|B_\delta(x_0)|^{-\frac{s\,p}{N}}\, \int_{B_\delta(x_0)} |u(x)-\overline u_{x_0,\delta}|^p\, dx\le C\, [u]^p_{W^{s,p}(\mathbb{R}^N)},
\end{equation}
possibly with a different constant $C>0$. The estimate \eqref{campanato} implies that $u$ belongs to the Campanato space $\mathcal{L}^{p,sp}$, which is isomorphic to $C^{0,\alpha}$ with $\alpha=s-p/N$ (see \cite[Theorem 2.9]{Gi}). This gives \eqref{holderian}, while \eqref{limitata} can be obtained from the previous by simply taking $y$ outside the support of $u$.
\end{proof}

\section{The first fractional eigenvalue}
\label{sec:first}

\begin{defi}
Let $1<p<\infty$ and $s\in(0,1)$. Given an open and bounded set $\Omega\subset\mathbb{R}^N$ we define
\begin{equation}
\label{p}
\lambda^s_{1,p}(\Omega)=\min_{u\in\widetilde W^{s,p}_0(\Omega)} \left\{\|u\|^p_{\widetilde W^{s,p}_0(\Omega)}\, :\,\|u\|_{L^p(\Omega)}=1,\, u\ge 0\right\},
\end{equation}
where the norm $\|\,\cdot\,\|_{\widetilde W^{s,p}_0(\Omega)}$ is defined in \eqref{normetta}.
\end{defi} 
Observe that the constraint $u\ge 0$ in \eqref{p} has no bearing: by dropping it, the minimal value $\lambda^s_{1,p}(\Omega)$ is unchanged, as for every $u\in L^p(\Omega)$ we have
\[
\big||u(x)|-|u(y)|\big|^p\le |u(x)-u(y)|^p\qquad \mbox{ and }\qquad \big\||u|\big\|_{L^p(\Omega)}=\|u\|_{L^p(\Omega)}.
\]
The minimum in \eqref{p} is well-defined thanks to Theorem \ref{teo:compact} (see also \cite[Theorem 5]{LL}) and 
every minimizer $u_\Omega\in \widetilde W^{s,p}_0(\Omega)$ satisfies the following nonlocal and nonlinear equation
\begin{equation}
\label{eigeneq}
\begin{split}
\int_{\mathbb{R}^N}\int_{\mathbb{R}^N} &\frac{|u(x)-u(y)|^{p-2}\, (u(x)-u(y))}{|x-y|^{N+s\,p}}\, (\varphi(x)-\varphi(y))\, dx\, dy=\lambda\, \int_\Omega |u|^{p-2}\,u\, \varphi\, dx,\end{split}
\end{equation}
for every $\varphi\in \widetilde W^{s,p}_0(\Omega)$, with $\lambda=\lambda^s_{1,p}(\Omega)$. 
\begin{oss}
Observe that $\lambda^s_{1,p}(\Omega)$ equals the inverse of the best constant in the Poincar\'e inequality \eqref{poincare}, thus $\lambda^s_{1,p}(\Omega)>0$ thanks to Lemma \ref{poincare}, indeed we have the lower bound
\[
\lambda^s_{1,p}(\Omega)\ge \frac{1}{\mathcal{I}_{N,s,p}(\Omega)},
\] 
with $\mathcal{I}_{N,s,p}$ as in \eqref{cazzo}.
\end{oss}
We show that solutions to \eqref{eigeneq} are globally bounded. The same result can be found in the recent paper \cite{FP}: there a suitable modification of the De Giorgi iteration method is employed. Here on the contrary we use a variant of the Moser iteration technique. We can limit ourselves to prove the result for $s\, p\le N$, since for $s\, p>N$ functions in $\widetilde W^{s,p}_0(\Omega)$ are H\"older continuous and thus bounded, thanks to Proposition \ref{teo:holderian}.

\begin{teo}[Global $L^\infty$ estimate]
\label{lm:stimetta}
Let $1<p<\infty$ and $0<s<1$ such that $s\,p\le N$. 
If $u\in \widetilde W^{s,p}_0(\Omega)$ achieves the minimum \eqref{p}, then $u\in L^\infty(\mathbb{R}^N)$ and for $s\,p<N$ we have the estimate
\begin{equation}
\label{stimettaLinfty}
\|u\|_{L^\infty(\Omega)}\le \widetilde C_{N,p,s}\, \left[\lambda_{1,p}^{s}(\Omega)\right]^{\frac{N}{s\, p^2}}\,\|u\|_{L^p(\Omega)},
\end{equation}
where $\widetilde C_{N,p,s}>0$ is a constant depending only on $N,p$ and $s$ (see Remark \ref{oss:costante} below).
\end{teo}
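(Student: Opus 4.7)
The strategy is a Moser iteration adapted to the nonlocal setting. I would work under the assumption $u \ge 0$ (which is allowed since \eqref{eigeneq} is preserved by $u \mapsto |u|$) and extend $u$ by zero outside $\Omega$. Set $\chi := p^*/p = N/(N-s\,p) > 1$ and $q_k := p\,\chi^k$. The aim is to prove a one-step reverse--H\"older estimate
\begin{equation*}
\|u\|_{L^{q_{k+1}}(\Omega)} \le \bigl(C_{N,p,s}\, q_k^{p-1}\,\lambda\bigr)^{1/q_k}\,\|u\|_{L^{q_k}(\Omega)},\qquad \lambda := \lambda^s_{1,p}(\Omega),
\end{equation*}
and then iterate; because $\chi>1$ both $\sum 1/q_k$ and $\sum (\log q_k)/q_k$ converge, and $\sum_{k\ge 0} 1/q_k = (1/p)\cdot \chi/(\chi-1) = N/(s\,p^2)$, which produces exactly the exponent on $\lambda$ claimed in \eqref{stimettaLinfty}.

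For a single step I would plug $\varphi = u_M^{\beta}$ into \eqref{eigeneq}, with $u_M := \min(u,M)$ and $\beta \ge 1$. Since $t\mapsto \min(t,M)^\beta$ is globally Lipschitz and vanishes at $0$, composition with a smooth approximation of $u$ shows $\varphi \in \widetilde W^{s,p}_0(\Omega)$. The algebraic heart of the argument is the pointwise inequality
\begin{equation*}
|a-b|^{p-2}(a-b)\,\bigl(a_M^\beta - b_M^\beta\bigr) \ge \frac{\beta\, p^p}{(\beta+p-1)^p}\,\bigl|a_M^{(\beta+p-1)/p}-b_M^{(\beta+p-1)/p}\bigr|^p,\qquad a,b\ge 0,
\end{equation*}
which extends the classical $p$-Laplacian Moser inequality to the truncated variables. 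It is verified by case analysis on whether $a,b$ exceed $M$: three of the four cases reduce at once to the untruncated inequality, and the cross-case $a\le M < b$ (and its mirror) is handled using $|a-b|\ge |a_M-b_M|$.

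Combining this pointwise bound with \eqref{eigeneq}, applying the fractional Sobolev inequality (see Corollary~\ref{coro:compact}) to $v = u_M^{(\beta+p-1)/p}$, and using $u^{p-1}\,u_M^\beta \le u^{\beta+p-1}$ on the right, one obtains
\begin{equation*}
\|u_M\|_{L^{(\beta+p-1)\chi}(\Omega)}^{\beta+p-1}\;\le\; \mathcal{S}\,\frac{(\beta+p-1)^p}{\beta\, p^p}\,\lambda\,\|u\|_{L^{\beta+p-1}(\Omega)}^{\beta+p-1}.
\end{equation*}
Fatou on the left as $M\to\infty$ (since $u_M^{(\beta+p-1)/p}\nearrow u^{(\beta+p-1)/p}$ pointwise) gives the one-step estimate with $q_k := \beta+p-1$. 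Starting from $q_0 = p$ (only $u\in L^p(\Omega)$ is needed to start), iterating, taking $q_n$-th roots, and telescoping gives
\begin{equation*}
\|u\|_{L^{q_n}(\Omega)}\;\le\;\prod_{k=0}^{n-1} \bigl(C_{N,p,s}\,q_k^{p-1}\,\lambda\bigr)^{1/q_k}\,\|u\|_{L^p(\Omega)},
\end{equation*}
and sending $n\to\infty$ yields \eqref{stimettaLinfty}, as $q_n\to\infty$ and the infinite product is finite by the summability observed above.

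The main obstacle is the truncated Moser inequality with the sharp constant $\beta\,p^p/(\beta+p-1)^p$: in the nonlocal setting, the cross-case in which the truncation separates $a$ and $b$ requires a careful reduction so that the constant does not deteriorate with $M$. Once this algebraic step is in place the rest is bookkeeping: tracking the Sobolev constant and the factor $q_k^{p-1}$ through the iteration, summing the exponents to get $N/(s\,p^2)$, and verifying that the resulting constant depends only on $N,p,s$. The threshold case $s\,p=N$ (boundedness without an explicit bound) follows from the same iteration after replacing $s$ with any $s'\in(0,s)$ satisfying $s'\,p<N$ and invoking Lemma~\ref{lm:sommabilità}.
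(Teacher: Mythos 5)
Your proposal is correct and follows essentially the same route as the paper: the test function $u_M^\beta$ in \eqref{eigeneq}, the truncated pointwise inequality (which is exactly \eqref{puntuale2} of the appendix, proved there by the same reduction of the cross-case to the untruncated inequality), the fractional Sobolev inequality applied to $u_M^{(\beta+p-1)/p}$, the limit $M\to\infty$, and the identical bookkeeping $\sum_k 1/q_k=N/(s\,p^2)$. The only deviation is the borderline case $s\,p=N$, where you pass to a subcritical $s'<s$ via Lemma \ref{lm:sommabilità} whereas the paper uses the embedding $\widetilde W^{s,p}_0(\Omega)\hookrightarrow L^{2p}(\Omega)$ and iterates with doubling exponents; both are valid since no explicit constant is claimed there.
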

\begin{proof}
We set for simplicity $\lambda=\lambda^s_{1,p}(\Omega)$ and we first consider the case $s\,p<N$. For every $M$, we define $u_M=\min\{u,M\}$ and observe that $u_M$ is still in $\widetilde W^{s,p}_0(\Omega)$, since this is just the composition of $u$ with a Lipschitz function.
Given $\beta\ge 1$ , we insert the test function $\varphi=u_M^\beta$ in \eqref{eigeneq}, then we get
\[
\int_{\mathbb{R}^N}\int_{\mathbb{R}^N} \frac{|u(x)-u(y)|^{p-2}\, (u(x)-u(y))\, (u^\beta_M(x)-u^\beta_M(y))}{|x-y|^{N+s\, p}}\, dx dy\le \lambda \int_{\mathbb{R}^N} u^{\beta+p-1}\, dx,
\]
where we used that $u_M\le u$.
We now observe that the left-hand side can be estimated from below by a Gagliardo seminorm of some power of $u$. Indeed, by using inequality \eqref{puntuale2} in the Appendix we get
\[
\begin{split}
\int_{\mathbb{R}^N}\int_{\mathbb{R}^N} &\frac{|u(x)-u(y)|^{p-2}\, (u(x)-u(y))}{|x-y|^{N+s\, p}}\, (u^\beta_M(x)-u^\beta_M(y))\, dx dy\\
&\ge \frac{\beta\, p^p}{(\beta+p-1)^p}\, \int_{\mathbb{R}^N}\int_{\mathbb{R}^N} \frac{\left|u^\frac{\beta+p-1}{p}_M(x)-u^\frac{\beta+p-1}{p}_M(y)\right|^p}{|x-y|^{N+s\, p}}\, dxdy.
\end{split}
\]
We can now use the Sobolev inequality for $W^{s,p}_0(\mathbb{R}^N)$, so to get
\[
\int_{\mathbb{R}^N}\int_{\mathbb{R}^N} \frac{\left|u^\frac{\beta+p-1}{p}_M(x)-u^\frac{\beta+p-1}{p}_M(y)\right|^p}{|x-y|^{N+s\, p}}\, dxdy\ge C_{N,p,s}\, \left(\int_{\mathbb{R}^N} \left(u^\frac{\beta+p-1}{p}_M\right)^\frac{N\,p}{N-s\,p}\, dx\right)^\frac{N-s\,p}{N}. 
\]
By keeping everything together and passing to the limit as $M$ goes to $\infty$, we then obtain the following iterative scheme of reverse H\"older inequalities
\begin{equation}
\label{invertito}
\left(\int \left(u^\frac{\beta+p-1}{p}\right)^\frac{N\,p}{N-s\,p}\, dx\right)^\frac{N-s\,p}{N}\le \frac{\lambda}{C_{N,p,s}}\, \left(\frac{\beta+p-1}{p}\right)^{p-1}\, \int \left(u^\frac{\beta+p-1}{p}\right)^p\, dx,
\end{equation}
where we used that $\beta\ge 1$, so that
\[
\frac{\beta+p-1}{p}\, \frac{1}{\beta}\le 1.
\]
Let us now set $\vartheta=\frac{\beta+p-1}{p}$, then the previous inequalities can be written as
\[
\left(\int u^{\vartheta\,\frac{N\,p}{N-s\,p}}\, dx\right)^\frac{N-s\,p}{\vartheta\, N\, p}\le \left(\frac{\lambda}{C_{N,p,s}}\right)^\frac{1}{\vartheta\, p}\, \left(\vartheta^\frac{1}{\vartheta}\right)^\frac{p-1}{p}\, \left(\int u^{\vartheta\, p}\right)^\frac{1}{\vartheta\, p},
\]
that is
\[
\|u\|_{L^{\vartheta\frac{N\, p}{N-s\,p}}}\le \left(\frac{\lambda}{C_{N,p,s}}\right)^\frac{1}{\vartheta\, p}\, \left(\vartheta^\frac{1}{\vartheta}\right)^\frac{p-1}{p}\, \|u\|_{L^{\vartheta\, p}}.
\]
We want to iterate the previous inequality, by taking the following sequence of exponents
\[
\vartheta_0=1,\qquad\qquad \vartheta_{n+1}=\vartheta_n\, \frac{N}{N-s\,p}=\left(\frac{N}{N-s\,p}\right)^{n+1}.
\]
Observe that $N/(N-s\, p)>1$, then $\vartheta_n$ diverges at infinity and in addition
\[
\sum_{n=0}^\infty \frac{1}{\vartheta_n}=\sum_{n=0}^\infty \left(\frac{N-s\,p}{N}\right)^n=\frac{N}{s\,p},
\]
and
\[
\prod_{n=0}^\infty \vartheta_n^\frac{1}{\vartheta_n}=\exp\left(\sum_{n=0}^\infty \frac{n\, \log\left(\frac{N}{N-s\,p}\right)}{\left(\frac{N}{N-s\,p}\right)^n}\right)=\left(\frac{N}{N-s\,p}\right)^{\frac{N-s\, p}{s\,p}\,\frac{N}{s\,p}}.
\]
By starting from $n=0$, at the step $n$ we have
\[
\|u\|_{L^{\vartheta_{n+1}}}\le \left(\left(\frac{\lambda}{C_{N,p,s}}\right)^\frac{1}{p}\right)^{\sum_{i=0}^n \frac{1}{\vartheta_i}}\, \left(\prod_{i=0}^n \vartheta_i^\frac{1}{\vartheta_i}\right)^\frac{p-1}{p}\, \|u\|_{L^p},
\]
then by taking the limit as $n$ goes to $\infty$ we finally obtain
\[
\|u\|_{L^\infty}\le \left(\frac{\lambda}{C_{N,p,s}}\right)^\frac{N}{s\,p^2}\, \left(\frac{N}{N-s\,p}\right)^{\frac{N(N-s\, p)}{s^2\,p^2}\,\frac{p-1}{p}}\,\|u\|_{L^p},
\]
which concludes the proof.
\vskip.2cm\noindent
We now pay attention to the borderline case $s\, p=N$. In this case $\widetilde W^{s,p}_0(\Omega)\hookrightarrow L^q(\Omega)$ for every $q<\infty$. Then we can proceed as before, by replacing Sobolev inequality with the following one
\[
\int_{\mathbb{R}^N}\int_{\mathbb{R}^N} \frac{\left|u^\frac{\beta+p-1}{p}_M(x)-u^\frac{\beta+p-1}{p}_M(y)\right|^p}{|x-y|^{N+s\, p}}\, dxdy\ge \alpha^s_{p}(\Omega)\, \left(\int_{\mathbb{R}^N} \left(u^\frac{\beta+p-1}{p}_M\right)^{2\,p}\, dx\right)^\frac{1}{2},
\]
where 
\[
\alpha^s_{p}(\Omega)=\min_{\widetilde W^{s,p}_0(\Omega)} \left\{[u]^p_{W^{s,p}(\mathbb{R}^N)}\, :\, \|u\|_{L^{2\,p}(\Omega)}=1\right\}.
\]
Then as before we arrive at 
\begin{equation}
\label{invertito2}
\left(\int \left(u^\frac{\beta+p-1}{p}\right)^{2\,p}\, dx\right)^\frac{1}{2}\le \frac{\lambda}{\alpha^s_{p}(\Omega)}\, \left(\frac{\beta+p-1}{p}\right)^{p-1}\, \int \left(u^\frac{\beta+p-1}{p}\right)^p\, dx,
\end{equation}
which is analogous to \eqref{invertito}. By setting again $\vartheta=(\beta+p-1)/p$, we obtain
\[
\left(\int u^{2\,\vartheta\,p}\, dx\right)^\frac{1}{2\,p\,\vartheta}\le \left(\frac{\lambda}{\alpha^s_{p}(\Omega)}\right)^\frac{1}{p\,\vartheta}\, \left(\vartheta^\frac{1}{\vartheta}\right)^\frac{p-1}{p} \left(\int u^{p\,\vartheta}\, dx\right)^\frac{1}{p\,\vartheta}
\]
By iterating the previous with the sequence of exponents
\[
\vartheta_0=1,\qquad \vartheta_{n+1}=2\,\vartheta_n=2^{n+1},
\]
we can conclude the proof as before.
\end{proof}
\begin{oss}
\label{oss:costante}
A closer inspection of the proof informs us that for $s\,p<N$ the constant in \eqref{stimettaLinfty} is given by
\[
\widetilde C_{N,p,s}=\left(\sup_{u\in W^{s,p}_0(\mathbb{R}^N)\setminus\{0\}} \frac{\displaystyle\left(\int_{\mathbb{R}^N} |u|^\frac{N\,p}{N-s\,p}\, dx\right)^\frac{N-s\,p}{N}}{[u]^p_{W^{s,p}(\mathbb{R}^N)}}\right)^\frac{N}{s\,p^2}\, \left(\frac{N}{N-s\,p}\right)^{\frac{N(N-s\, p)}{s^2\,p^2}\,\frac{p-1}{p}}.
\]
The first term is the best constant in the Sobolev inequality for $W^{s,p}_0(\mathbb{R}^N)$, see \cite{FS}. .
\end{oss}
Observe that the quantity $\lambda^s_{1,p}(\Omega)$ enjoys the following scaling law
\[
\lambda^s_{1,p}(t\, \Omega)=t^{-s\,p}\, \lambda^s_{1,p}(\Omega),\qquad t>0,
\]
then the shape functional $\Omega\mapsto |\Omega|^{(s\,p)/N}\, \lambda^s_{1,p}(\Omega)$ is scaling invariant. We have the following.
\begin{teo}[Faber-Krahn inequality]
\label{teo:FK}
Let $1< p<\infty$ and $s\in(0,1)$. For every $\Omega\subset\mathbb{R}^N$ open and bounded, we have
\begin{equation}
\label{FK}
|\Omega|^{(s\,p)/N}\, \lambda^s_{1,p}(\Omega)\ge |B|^{(s\,p)/N}\, \lambda^s_{1,p}(B),
\end{equation}
where $B$ is any $N-$dimensional ball. Moreover, if equality holds in \eqref{FK} then $\Omega$ is a ball. In other words, balls uniquely minimize the first eigenvalue $\lambda^s_{1,p}$ among sets with given $N-$dimensional Lebesgue measure.
\end{teo}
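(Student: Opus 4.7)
The plan is to run the standard Faber--Krahn argument via symmetric decreasing rearrangement, with the fractional Pólya--Szegő inequality playing the role of its classical counterpart, and to handle the equality case through the known rigidity result for this rearrangement inequality.

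First I would let $u\in\widetilde{W}^{s,p}_0(\Omega)$ be a nonnegative minimizer of \eqref{p}, so that $\|u\|_{L^p(\Omega)}=1$ and $[u]^p_{W^{s,p}(\mathbb{R}^N)}=\lambda^s_{1,p}(\Omega)$. Existence is guaranteed by Theorem \ref{teo:compact}. Let $u^*$ denote the symmetric decreasing rearrangement of $u$, extended by $0$ outside its support, which is a ball $\Omega^*$ centered at the origin with $|\Omega^*|=|\Omega|$. Since $u$ vanishes a.e.\ outside $\Omega$, one has $u^*\in \widetilde{W}^{s,p}_0(\Omega^*)$, and since rearrangement preserves $L^p$ norms, $\|u^*\|_{L^p(\Omega^*)}=1$.

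The key ingredient is the fractional Pólya--Szegő inequality
\[
[u^*]_{W^{s,p}(\mathbb{R}^N)}\le [u]_{W^{s,p}(\mathbb{R}^N)},
\]
which follows from the layer-cake formula
\[
[v]^p_{W^{s,p}(\mathbb{R}^N)}=c_{p}\int_0^\infty\int_0^\infty\int_{\mathbb{R}^N}\int_{\mathbb{R}^N}\frac{|1_{\{v>t\}}(x)-1_{\{v>\tau\}}(y)|}{|x-y|^{N+sp}}\,|t-\tau|^{p-2}\,dx\,dy\,dt\,d\tau
\]
(valid for nonnegative $v$) together with the Riesz rearrangement inequality applied to the positive part of the kernel; the rearranged double integral only gets larger, so the Gagliardo seminorm decreases. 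Combining this with the admissibility of $u^*$ in the variational problem on $\Omega^*$ gives
\[
\lambda^s_{1,p}(\Omega^*)\le [u^*]^p_{W^{s,p}(\mathbb{R}^N)}\le [u]^p_{W^{s,p}(\mathbb{R}^N)}=\lambda^s_{1,p}(\Omega).
\]
Since $|\Omega^*|=|\Omega|$, multiplying both sides by $|\Omega|^{sp/N}$ and noting that $\Omega^*$ is a ball yields \eqref{FK} via the scaling law $\lambda^s_{1,p}(tB)=t^{-sp}\lambda^s_{1,p}(B)$.

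The hardest part is the rigidity statement. If equality holds in \eqref{FK} then equality holds throughout the chain above; in particular $[u^*]_{W^{s,p}(\mathbb{R}^N)}=[u]_{W^{s,p}(\mathbb{R}^N)}$ with $u$ and $u^*$ sharing the same distribution function. Here I would invoke the Frank--Seiringer rigidity result for fractional Pólya--Szegő: equality forces $u$ to be, up to a translation, equal to $u^*$, hence radially symmetric and strictly decreasing on its support (strict monotonicity can be recovered from the eigenvalue equation \eqref{eigeneq} together with a strong maximum principle / continuity of the rearranged minimizer). But then $\{u>0\}$ is (up to a null set) a ball, and since any minimizer must be strictly positive inside $\Omega$ (again by the nonlocal strong minimum principle applied to \eqref{eigeneq}), one concludes $\Omega$ coincides with that ball up to a set of measure zero. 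This final step -- invoking and correctly citing the rigidity theorem for the fractional rearrangement inequality, which is much more delicate than its classical analogue because the nonlocal kernel couples all level sets simultaneously -- is the main obstacle, whereas the rearrangement inequality itself and the scaling bookkeeping are routine.
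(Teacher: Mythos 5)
Your argument is correct and follows essentially the same route as the paper: reduce to $|\Omega|=|B|$, apply the fractional P\`olya--Szeg\H{o} inequality \eqref{PS} to a minimizer, and settle the equality case via the Frank--Seiringer rigidity theorem (the paper's \cite[Theorem A.1]{FS}), concluding that the eigenfunction is a translate of a radially decreasing function and hence that $\Omega$ is a ball. The only differences are cosmetic: you sketch a proof of \eqref{PS} via layer-cake and Riesz rearrangement where the paper simply cites \cite{AL,FS}, and you are slightly more explicit about why $\{u>0\}$ exhausts $\Omega$ in the rigidity step.
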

\begin{proof}
Without loss of generality, we can suppose that $|\Omega|=|B|$. Then it is sufficient to use the following {\it P\`olya-Szeg\H{o} principle}
\begin{equation}
\label{PS}
[u]^p_{W^{s,p}(\mathbb{R}^N)}\ge [u^\#]^p_{W^{s,p}(\mathbb{R}^N)},
\end{equation}
which is proved in \cite[Theorem 9.2]{AL}, see also \cite[Theorem A.1]{FS}.
In \eqref{PS} $u^\#$ stands for the symmetric decreasing rearrangement of the function $u$, i.e. $u^\#\in \widetilde W^{s,p}_0(B)$ is the radially symmetric decreasing function such that
\[
|\{x\,:\, u(x)>t\}|=|\{x\, :\, u^\#(x)>t\}|,\qquad t>0.
\]
By using \eqref{PS}, we immediately get \eqref{FK}. For the cases of equality, we observe that if $\lambda^s_{1,p}(\Omega)=\lambda^s_{1,p}(B)$ and $|\Omega|=|B|$, then equality must hold in \eqref{PS}. Again by \cite[Theorem A.1]{FS}, we obtain that any first eigenfunction of $\Omega$ has to coincide with (a translate of) a radially symmetric decreasing function. This implies that $\Omega$ has to be a ball.
\end{proof}

\section{The $s-$perimeter of a set}
\label{sec:sperimeter}
\begin{defi}
For every Borel set $E$, we define its {\it $s-$perimeter} as
\[
P_s(E)=[1_E]_{W^{s,1}(\mathbb{R}^N)}=\int_{\mathbb{R}^N} \int_{\mathbb{R}^N} \frac{|1_E(x)-1_E(y)|}{|x-y|^{N+s}}\, dx\,dy,
\]
where it is understood that $P_s(E)=+\infty$ if the above integral is not finite. 
\end{defi}
Observe that the $s-$perimeter has the following scaling property
\[
P_s(t\, E)=t^{N-s}\, P_s(E),\qquad t>0,
\] 
and we have the isoperimetric inequality
\begin{equation}
\label{siso}
P_s(E)\ge P_s(B)\, \left(\frac{|E|}{|B|}\right)^\frac{N-s}{N},
\end{equation}
where $B$ is any $N-$dimensional ball. Moreover, equality holds in \eqref{siso} if and only if $E$ is a ball, see \cite{FS,FMM}.
It is straightforward to see from the definition that
\[
P_s(E)=2\,\int_{E}\int_{E^c} \frac{1}{|x-y|^{N+s}}\, dx\,dy,
\]
where we set $E^c=\mathbb{R}^N\setminus E$.
In what follows we denote by $BV(\mathbb{R}^N)$ the space
\[
BV(\mathbb{R}^N)=\left\{u\in L^1(\mathbb{R}^N)\, :\, |\nabla u|(\mathbb{R}^N)<+\infty\right\},
\]
where $|\nabla u|(\mathbb{R}^N)$ is the total variation of the distributional gradient of $u$. The following interpolation inequality will be useful.
\begin{prop}
\label{lm:nostro}
Let $s\in(0,1)$. For every $u\in BV(\mathbb{R}^N)$ we have
\begin{equation}
\label{interpolation}
[u]_{W^{s,1}(\mathbb{R}^N)}\le \frac{2^{1-s}\,N\, \omega_N}{(1-s)\, s}\Big[|\nabla u|(\mathbb{R}^N)\Big]^s\,\|u\|^{1-s}_{L^1(\mathbb{R}^N)}.
\end{equation}
\end{prop}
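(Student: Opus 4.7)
The strategy is the classical ``splitting at a threshold radius'' argument, optimized at the end. Writing the seminorm in the translation form via the change of variables $h=y-x$,
\[
[u]_{W^{s,1}(\mathbb{R}^N)}=\int_{\mathbb{R}^N}\int_{\mathbb{R}^N}\frac{|u(x+h)-u(x)|}{|h|^{N+s}}\,dx\,dh,
\]
I would split the outer integral into the regions $\{|h|\le\delta\}$ and $\{|h|>\delta\}$ for a parameter $\delta>0$ to be optimized at the end.

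On the inner ball $\{|h|\le\delta\}$, I would use the standard translation estimate for $BV$ functions, namely $\int_{\mathbb{R}^N}|u(x+h)-u(x)|\,dx\le |h|\,|\nabla u|(\mathbb{R}^N)$ (true for $u\in C^1_c$ by the fundamental theorem of calculus and extended to $BV(\mathbb{R}^N)$ by the usual mollification/approximation argument). On the exterior $\{|h|>\delta\}$ I would just use the brutal bound $\int_{\mathbb{R}^N}|u(x+h)-u(x)|\,dx\le 2\|u\|_{L^1(\mathbb{R}^N)}$. Passing in polar coordinates the two resulting radial integrals give
\[
\int_{|h|\le\delta}\frac{dh}{|h|^{N+s-1}}=\frac{N\,\omega_N}{1-s}\,\delta^{1-s},\qquad \int_{|h|>\delta}\frac{dh}{|h|^{N+s}}=\frac{N\,\omega_N}{s}\,\delta^{-s}.
\]

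Collecting everything I arrive at
\[
[u]_{W^{s,1}(\mathbb{R}^N)}\le N\,\omega_N\left(\frac{\delta^{1-s}}{1-s}\,|\nabla u|(\mathbb{R}^N)+\frac{2\,\delta^{-s}}{s}\,\|u\|_{L^1(\mathbb{R}^N)}\right).
\]
The final step is to choose $\delta$ optimally. A direct computation (or simply the choice $\delta=2\,\|u\|_{L^1(\mathbb{R}^N)}/|\nabla u|(\mathbb{R}^N)$, which balances the two exponents cleanly) turns both terms into $2^{1-s}$ or $2^{-s}$ multiples of $\|u\|_{L^1}^{1-s}|\nabla u|^s$, and summing $\frac{1}{1-s}+\frac{1}{s}=\frac{1}{s(1-s)}$ reproduces exactly the constant $\frac{2^{1-s}N\omega_N}{s(1-s)}$ stated in the proposition.

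I do not see any genuine obstacle here: the only nontrivial input is the translation estimate for $BV$ functions, which is completely standard, and the trivial cases ($|\nabla u|(\mathbb{R}^N)=0$ or $\|u\|_{L^1}=0$) are handled by noticing that a $BV(\mathbb{R}^N)\cap L^1(\mathbb{R}^N)$ function with vanishing total variation is constant and hence identically zero. The only care needed is to keep the constants sharp enough so that the announced value $2^{1-s}N\omega_N/((1-s)s)$ is recovered; this pins down the choice of $\delta$.
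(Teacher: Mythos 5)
Your proposal is correct and follows essentially the same route as the paper: the paper first proves the additive bound with the split taken at radius $1$ (using the same translation estimate $\int|u(x+h)-u(x)|\,dx\le|h|\,|\nabla u|(\mathbb{R}^N)$ for the near part and the triangle inequality for the far part) and then recovers the multiplicative form by applying it to the rescaled function $u(x/\lambda)$ and optimizing in $\lambda$, which is exactly your splitting at a general threshold $\delta$ followed by optimization. Your choice $\delta=2\|u\|_{L^1}/|\nabla u|(\mathbb{R}^N)$ is indeed the exact minimizer and reproduces the constant $2^{1-s}N\omega_N/((1-s)s)$, so there is nothing to fix.
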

\begin{proof}
Let $u\in BV(\mathbb{R}^N)$, at first we will prove that
\begin{equation}
\label{BVnonscaling}
\int_{\mathbb{R}^N} \int_{\mathbb{R}^N} \frac{|u(x)-u(y)|}{|x-y|^{N+s}}\, dx\, dy\le N\, \omega_N\, \left(\frac{1}{1-s}\, \int_{\mathbb{R}^N} |\nabla u|\, dx+\frac{2}{s}\,\int_{\mathbb{R}^N} |u|\, dx\right).
\end{equation}
We recall that there exists a 
sequence $\{u_n\}_{n\in\mathbb{N}}\subset C^\infty(\mathbb{R}^N)\cap BV(\mathbb{R}^N)$ such that 
\[
\lim_{n\to\infty} \|u_n-u\|_{L^1(\mathbb{R}^N)}=0\qquad \mbox{ and }\qquad \lim_{n\to\infty} \int_{\mathbb{R}^N} |\nabla u_n|\, dx=|\nabla u|(\mathbb{R}^N),
\]
see for example \cite{AFP}.
Then in order to prove \eqref{BVnonscaling} it will be sufficient to prove it for $u_n$.
We have
\[
\begin{split}
\int_{\mathbb{R}^N}\int_{\mathbb{R}^N} \frac{|u_n(x)-u_n(y)|}{|x-y|^{N+s}}\, dx\,dy&=\int_{\mathbb{R}^N}\int_{\mathbb{R}^N} \frac{|u_n(x+h)-u_n(x)|}{|h|^{N+s}}\, dx\,dh\\
&=\int_{\{h\,:\, |h|\ge 1\}}\int_{\mathbb{R}^N} \frac{|u_n(x+h)-u_n(x)|}{|h|^{N+s}}\, dx\,dh\\
&+\int_{\{h\,:\, |h|< 1\}}\int_{\mathbb{R}^N} \frac{|u_n(x+h)-u_n(x)|}{|h|^{N+s}}\, dx\,dh
\end{split}
\]
then we observe that we have
\[
|u_n(x+h)-u_n(x)|\le \left(\int_0^1 |\nabla u_n(x+t\, h)|\, dt\right)\, |h|,\qquad h\in\mathbb{R}^N.
\]
By using this and the invariance of $L^p$ norms by translations, we get
\[
\begin{split}
\int_{\{h\,:\, |h|< 1\}}\int_{\mathbb{R}^N} \frac{|u_n(x+h)-u_n(x)|}{|h|^{N+s}}\, dx\,dh&\le \int_{\{h\, :\, |h|<1\}}\int_{\mathbb{R}^N} \frac{\displaystyle\int_0^1 |\nabla u_n(x+t\, h)|\, dt}{|h|^{N+s-1}} \, dx\,dh\\
&=\int_{\{h\, :\, |h|<1\}}\!\!\frac{\displaystyle\int_{\mathbb{R}^N}|\nabla u_n|\, dx}{|h|^{N+s-1}} \,dh= \frac{N\, \omega_N}{1-s}\, \int_{\mathbb{R}^N} |\nabla u_n|\, dx.
\end{split}
\]
For the other integral, by using the triangular inequality and again the invariance of $L^p$ norms by translations, we get
\[
\begin{split}
\int_{\{h\,:\, |h|\ge 1\}}\int_{\mathbb{R}^N} \frac{|u_n(x+h)-u_n(x)|}{|h|^{N+s}}\, dx\,dh&\le \int_{\{h\, :\, |h|\ge1\}}\,\frac{1}{|h|^{N+s}}\,\left(\int_{\mathbb{R}^N} |u_n(x+h)|\, dx\right)dh\\
&+\left(\int_{\{h\, :\, |h|\ge 1\}} \frac{dh}{|h|^{N+s}}\right)\,\left(\int_{\mathbb{R}^N} |u_n|\, dx\right)\\
&=2\,\frac{N\,\omega_N}{s}\, \int_{\mathbb{R}^N} |u_n|\, dx.\\
\end{split}
\]
In conclusion we obtained \eqref{BVnonscaling} for the sequence $\{u_n\}_{n\in\mathbb{N}}$ and thus for $u$, by passing to the limit.
\vskip.2cm\noindent
In order to arrive at \eqref{interpolation}, it is now sufficient to use a standard homogeneity argument. Let $u\in BV(\mathbb{R}^N)\setminus\{0\}$ and set $u_\lambda(x)=u(x/\lambda)$, where $\lambda>0$. Then by \eqref{BVnonscaling} we get
\[
\lambda^{N-s}\,\int_{\mathbb{R}^N} \int_{\mathbb{R}^N} \frac{|u(x)-u(y)|}{|x-y|^{N+s}}\, dx\, dy\le N\, \omega_N\, \left(\frac{\lambda^{N-1}}{1-s}\, \int_{\mathbb{R}^N} |\nabla u|\, dx+\frac{2\,\lambda^N}{s}\,\int_{\mathbb{R}^N} |u|\, dx\right),
\] 
that is
\begin{equation}
\label{omogenizzo!}
\lambda^{1-s}\,\int_{\mathbb{R}^N} \int_{\mathbb{R}^N} \frac{|u(x)-u(y)|}{|x-y|^{N+s}}\, dx\, dy-2\, N\,\omega_N\, \frac{\lambda}{s}\,\int_{\mathbb{R}^N} |u|\, dx\le \frac{N\,\omega_N}{1-s}\, \int_{\mathbb{R}^N} |\nabla u|\, dx.
\end{equation}
The left-hand side is maximal for 
\[
\lambda=\left(\frac{(1-s)\,s\, [u]_{W^{s,1}(\mathbb{R}^N)}}{2\,N\,\omega_N\, \|u\|_{L^1(\mathbb{R}^N)}}\right)^\frac{1}{s}.
\]
By replacing this value in \eqref{omogenizzo!}, we obtain the desired result.
\end{proof}
\begin{oss}
\label{oss:sciarpa}
We point out that in dimension $N=1$ inequality \eqref{interpolation} is sharp for every $s\in(0,1)$, since equality is attained for characteristic functions of bounded intervals. Let $u=1_I$ be the characteristic function of the interval $I$ having length $\ell$, a direct computation gives
\[
[u]_{W^{s,1}(\mathbb{R}^N)}=P_s(I)=\frac{4\, \ell^{1-s}}{s\,(1-s)},
\] 
while
\[
\omega_1=2,\qquad \|u\|_{L^1(\mathbb{R})}=\ell, \qquad |u'|(\mathbb{R})=2,
\]
then it is easily seen that equality holds in \eqref{interpolation}. 
\end{oss}
We now highlight a couple of consequences of inequality \eqref{interpolation}. The first one gives a relation between the $s-$perimeter and the standard distributional perimeter. For the proof it is sufficient to take $u=1_E$ in \eqref{interpolation}. A related estimate for $N=2$ can be found in \cite[Lemma 2.2]{MP}.
\begin{coro}
\label{coro:relazioni}
Let $s\in(0,1)$, for every finite perimeter set $E\subset\mathbb{R}^N$ we have
\[
P_s(E)\le \frac{2^{1-s}\, N\,\omega_N}{(1-s)\,s}\,P(E)^s\, |E|^{1-s}.
\]
\end{coro}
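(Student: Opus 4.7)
The plan is essentially to read off the corollary by specialising the interpolation inequality \eqref{interpolation} to characteristic functions of finite perimeter sets. First I would verify that for a finite perimeter set $E\subset\mathbb{R}^N$, the function $u=1_E$ lies in $BV(\mathbb{R}^N)$, with the three natural identifications
\[
[1_E]_{W^{s,1}(\mathbb{R}^N)}=P_s(E),\qquad |\nabla 1_E|(\mathbb{R}^N)=P(E),\qquad \|1_E\|_{L^1(\mathbb{R}^N)}=|E|.
\]
The first identity is the very definition of the $s$-perimeter, the second is the standard characterisation of the distributional perimeter of a set of finite perimeter as the total variation of $\nabla 1_E$, and the third is immediate.

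Then I would apply Proposition \ref{lm:nostro} to $u=1_E$: if $P(E)=+\infty$ or $|E|=+\infty$, the claimed inequality is trivial (the right-hand side is $+\infty$), while if both are finite, then $1_E\in BV(\mathbb{R}^N)\cap L^1(\mathbb{R}^N)$ and \eqref{interpolation} yields
\[
P_s(E)=[1_E]_{W^{s,1}(\mathbb{R}^N)}\le \frac{2^{1-s}\, N\,\omega_N}{(1-s)\,s}\,P(E)^s\,|E|^{1-s},
\]
which is exactly the statement to prove.

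There is no real obstacle here; the corollary is a direct substitution into the already established interpolation bound. The only minor point worth mentioning is the convention $P_s(E)=+\infty$ when the defining double integral diverges, which ensures that the statement is meaningful (and trivially true) in the degenerate cases where $E$ has infinite measure or is not of finite perimeter.
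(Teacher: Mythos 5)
Your proposal is correct and is exactly the paper's argument: the authors state that the corollary follows by taking $u=1_E$ in the interpolation inequality \eqref{interpolation} of Proposition \ref{lm:nostro}, which is precisely your substitution. Your extra remarks on the identifications $[1_E]_{W^{s,1}}=P_s(E)$, $|\nabla 1_E|(\mathbb{R}^N)=P(E)$, $\|1_E\|_{L^1}=|E|$ and on the degenerate case $|E|=+\infty$ are sound and only make explicit what the paper leaves implicit.
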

\begin{oss}
\label{oss:buchi}
The previous result implies that if $\{E_k\}\subset \mathbb{R}^N$ is such that $P(E_k)\le C$ and $|E_k|$ converges to $0$ as $k$ goes to $\infty$, then $P_s(E_k)$ as well converges to $0$. For example, by taking the annular set $C_k=\{x\,: \, 1-1/k<|x|<1 \}$, we get that $P_s(C_k)$ is going to $0$ as $k$ goes to $\infty$. Then in general for the $s-$perimeter it is not true that {\it filling a hole decreases the perimeter}, like in the standard case.
\end{oss}
By simply using Poincar\'e inequality in \eqref{interpolation}, we can also infer the following.
\begin{coro}
\label{coro:nostroerik}
Let $s\in(0,1)$. For every $u\in BV(\mathbb{R}^N)$ with compact support there holds
\[
\int_{\mathbb{R}^N} \int_{\mathbb{R}^N} \frac{|u(x)-u(y)|}{|x-y|^{N+s}}\, dx\, dy\le \frac{2^{1-s}\,N\, \omega_N}{(1-s)\, s}\,\mathrm{diam}(\mathrm{spt\,}u)^{1-s}\,|\nabla u|(\mathbb{R}^N),
\]
where $\mathrm{spt}(u)$ denotes the support of $u$. 
\end{coro}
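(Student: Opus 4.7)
The plan is to combine the interpolation inequality \eqref{interpolation} with the elementary $L^1$ Poincar\'e inequality
\[
\|u\|_{L^1(\mathbb{R}^N)} \le \mathrm{diam}(\mathrm{spt\,}u)\cdot |\nabla u|(\mathbb{R}^N),
\]
valid for every $u\in BV(\mathbb{R}^N)$ with compact support. Once this intermediate bound is established, substitution into \eqref{interpolation} yields
\[
[u]_{W^{s,1}(\mathbb{R}^N)}\le \frac{2^{1-s}N\,\omega_N}{(1-s)\,s}\Big[|\nabla u|(\mathbb{R}^N)\Big]^{s}\Big[\mathrm{diam}(\mathrm{spt\,}u)\cdot |\nabla u|(\mathbb{R}^N)\Big]^{1-s},
\]
and this simplifies immediately to the stated estimate, with the diameter factor appearing to the power $1-s$ and the total variation to the power $1$.

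To prove the Poincar\'e inequality, I would first reduce to the smooth case by mimicking the approximation of Proposition \ref{lm:nostro}: take $u_n=u\ast \varrho_n$ with $\varrho_n$ supported in $B_{1/n}$, so that $\mathrm{spt}\,u_n\subset \mathrm{spt}\,u+B_{1/n}$ and therefore $\mathrm{diam}(\mathrm{spt}\,u_n)\to \mathrm{diam}(\mathrm{spt\,}u)$; recall that $u_n\to u$ in $L^1$ and $|\nabla u_n|(\mathbb{R}^N)\to |\nabla u|(\mathbb{R}^N)$. For smooth $u$ with compact support, fix the direction $e_1$ and note that the projection of $\mathrm{spt}\,u$ on the $x_1$-axis is contained in some interval $[a,b]$ with $b-a\le \mathrm{diam}(\mathrm{spt\,}u)$. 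Writing
\[
u(x_1,x') = -\int_{x_1}^{+\infty}\partial_1 u(t,x')\,dt,
\]
we obtain $|u(x_1,x')|\le \int_{\mathbb{R}}|\partial_1 u(t,x')|\,dt$, the bound being nontrivial only for $x_1\in [a,b]$. Integrating first in $x_1$ (contributing the factor $b-a$) and then in $x'$ (giving $\|\partial_1 u\|_{L^1(\mathbb{R}^N)}\le |\nabla u|(\mathbb{R}^N)$) yields the desired inequality in the smooth case, and passing to the limit along $\{u_n\}$ gives it for $u\in BV$.

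The main (mild) obstacle is ensuring that the approximating sequence respects the support condition in the limit, so that the constant $\mathrm{diam}(\mathrm{spt\,}u)$ is recovered sharply; the mollification above is tailored to this purpose, and no finer analysis is required.
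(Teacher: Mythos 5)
Your proposal is correct and takes essentially the same route as the paper: the paper's proof consists precisely of substituting the $L^1$ Poincar\'e bound $\|u\|_{L^1(\mathbb{R}^N)}\le \mathrm{diam}(\mathrm{spt\,}u)\,|\nabla u|(\mathbb{R}^N)$ into the interpolation inequality \eqref{interpolation}. Your extra justification of that Poincar\'e inequality (integration along lines for smooth compactly supported functions, then mollification to pass to $BV$, with the support enlarging only by $O(1/n)$) is sound but is simply taken for granted in the paper.
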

In what follows, we will need the following Coarea Formula for nonlocal integrals. This has been first proved by Visintin in \cite{Vi}. The proof is omitted, we just recall that it is based on Fubini's Theorem and on the so-called Layer Cake Representation for functions.
\begin{lm}
\label{lm:coarea}
Let $u\in L^1(\mathbb{R}^N)$, then there holds the following Coarea-type formula
\begin{equation}
\label{coarea}
[u]_{W^{s,1}(\mathbb{R}^N)}=\int_{0}^\infty P_s(\{x\, :\, |u(x)|>t\})\, dt.
\end{equation}
In particular, if $[u]_{W^{s,1}(\mathbb{R}^N)}<+\infty$ then for almost every $t\in\mathbb{R}$ the sets $\{x\, :\, |u(x)|>t\}$ has finite $s-$perimeter.
\end{lm}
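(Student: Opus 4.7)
The plan is to combine the Layer Cake Representation of a non-negative function with Fubini's (Tonelli's) theorem, following the original argument of Visintin \cite{Vi}. Since the right-hand side of \eqref{coarea} depends only on $|u|$, I would first reduce to the case $u\ge0$; the general statement then follows by applying the non-negative version to $|u|$ (this is also consistent with the fact that $[|u|]_{W^{s,1}(\mathbb{R}^N)}\le[u]_{W^{s,1}(\mathbb{R}^N)}$, with equality for $u\ge0$, which is the relevant situation in the applications below).

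Assuming $u\ge0$, the key is the pointwise Layer Cake identity for the difference
\[
|u(x)-u(y)|=\int_{0}^{\infty}\bigl|1_{\{u>t\}}(x)-1_{\{u>t\}}(y)\bigr|\,dt,\qquad\text{for every }x,y\in\mathbb{R}^N.
\]
To verify it, I would fix $(x,y)$ and assume without loss of generality that $u(x)\ge u(y)$: the integrand on the right equals $1$ precisely for $t\in[u(y),u(x))$ and vanishes elsewhere, hence its integral over $(0,\infty)$ is $u(x)-u(y)=|u(x)-u(y)|$.

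Next I would divide both sides of this identity by $|x-y|^{N+s}$ and integrate over $\mathbb{R}^N\times\mathbb{R}^N$. Every integrand is non-negative, so Tonelli's theorem lets me exchange the order of integration with no integrability hypothesis, giving
\[
[u]_{W^{s,1}(\mathbb{R}^N)}=\int_{0}^{\infty}\!\left(\int_{\mathbb{R}^N}\!\!\int_{\mathbb{R}^N}\frac{|1_{\{u>t\}}(x)-1_{\{u>t\}}(y)|}{|x-y|^{N+s}}\,dx\,dy\right)\!dt=\int_{0}^{\infty}P_{s}(\{u>t\})\,dt,
\]
which, after the reduction to $|u|$, yields \eqref{coarea}. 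The last claim of the lemma is then an immediate corollary: if $[u]_{W^{s,1}(\mathbb{R}^N)}<+\infty$, then the non-negative integrand $t\mapsto P_{s}(\{|u|>t\})$ must be finite for a.e.\ $t\in(0,\infty)$.

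I do not foresee any real obstacle. The only slightly delicate point is verifying the pointwise Layer Cake identity for arbitrary measurable $u$, but this reduces to a one-line case analysis at each fixed pair $(x,y)$; measurability in $t$ of the map $t\mapsto P_{s}(\{u>t\})$ comes for free from the Tonelli step.
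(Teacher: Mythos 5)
Your proof is correct and follows exactly the route the paper indicates for this lemma (whose proof is omitted there with a pointer to Visintin): the pointwise layer-cake identity $|u(x)-u(y)|=\int_0^\infty\bigl|1_{\{u>t\}}(x)-1_{\{u>t\}}(y)\bigr|\,dt$ for $u\ge0$, followed by Tonelli. One remark on your reduction step: applying the non-negative case to $|u|$ yields $[|u|]_{W^{s,1}(\mathbb{R}^N)}$ on the left-hand side, which for sign-changing $u$ can be strictly smaller than $[u]_{W^{s,1}(\mathbb{R}^N)}$ (take $u=1_A-1_B$ with $A,B$ disjoint of positive measure: the left side of \eqref{coarea} exceeds the right side by $4L(A,B)$), so the identity as literally stated only holds for $u$ of constant sign; the correct general version replaces the right-hand side by $\int_{-\infty}^{+\infty}P_s(\{u>t\})\,dt$. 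This is an imprecision in the lemma's statement rather than a gap in your argument, and it is harmless here since the lemma is only ever applied to non-negative functions in the paper.
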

By Proposition \ref{lm:nostro} and Lemma \ref{lm:coarea}, we can infer the following limiting behaviour for the $(s,1)$ Gagliardo seminorm, whose proof is essentially the same as \cite[Theorem 8]{Lu}. We give it for ease of completeness.
\begin{prop}
\label{prop:conv}
Let $u \in BV(\mathbb{R}^N)$ have compact support. Then there holds
\begin{equation}
\label{conv}
\lim_{s\nearrow 1} (1-s)\,[u]_{W^{s,1}(\mathbb{R}^N)}=2\,\omega_{N-1}\, |\nabla u|(\mathbb{R}^N).
\end{equation}
\end{prop}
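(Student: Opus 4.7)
The plan is to use the coarea identity of Lemma \ref{lm:coarea} to reduce the assertion to the analogous statement for characteristic functions of finite perimeter sets, and then pass to the limit under the integral sign by dominated convergence. Setting $E_t=\{x\,:\,|u(x)|>t\}$, Lemma \ref{lm:coarea} yields
\[
(1-s)\,[u]_{W^{s,1}(\mathbb{R}^N)}=\int_0^\infty (1-s)\,P_s(E_t)\,dt,
\]
so the whole proof is organised around controlling and identifying the limit of the integrand as $s\nearrow 1$.

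For the pointwise limit, since $u\in BV(\mathbb{R}^N)$ the classical coarea formula guarantees that $E_t$ is a set of finite (distributional) perimeter for a.e.\ $t>0$, and for every such set the asymptotic
\[
\lim_{s\nearrow 1}(1-s)\,P_s(E_t)=2\,\omega_{N-1}\,P(E_t)
\]
is the celebrated Bourgain--Brezis--Mironescu/D\'avila limit specialised to characteristic functions, which we invoke from the literature (this is precisely the content of the analogous step in \cite{Lu}). This is the step I expect to be most delicate conceptually, but it is entirely available off the shelf.

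For the domination, Corollary \ref{coro:relazioni} together with the elementary Young inequality $a^s b^{1-s}\le s\,a+(1-s)\,b\le a+b$ (applied with $a=P(E_t)$, $b=|E_t|$) gives
\[
(1-s)\,P_s(E_t)\le \frac{2^{1-s}\,N\,\omega_N}{s}\,P(E_t)^s\,|E_t|^{1-s}\le C_N\,\bigl(P(E_t)+|E_t|\bigr),
\]
with $C_N$ independent of $s$ for $s$ in a left-neighbourhood of $1$. Because $u$ has compact support and belongs to $BV(\mathbb{R}^N)$, the standard coarea formulas
\[
\int_0^\infty P(E_t)\,dt=|\nabla u|(\mathbb{R}^N)<+\infty,\qquad \int_0^\infty |E_t|\,dt=\|u\|_{L^1(\mathbb{R}^N)}<+\infty,
\]
show that the majorant on the right-hand side is an $L^1(0,\infty)$ function of $t$, independent of $s$.

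It then suffices to apply the Lebesgue dominated convergence theorem to obtain
\[
\lim_{s\nearrow 1}(1-s)\,[u]_{W^{s,1}(\mathbb{R}^N)}=\int_0^\infty 2\,\omega_{N-1}\,P(E_t)\,dt=2\,\omega_{N-1}\,|\nabla u|(\mathbb{R}^N),
\]
where the last equality is, once more, the usual coarea formula for $BV$ functions. The only nontrivial ingredient is the pointwise limit for sets of finite perimeter; once that is taken as known, the argument is a clean coarea-plus-dominated-convergence scheme, with compactness of the support used only to ensure that $|E_t|$ is integrable in $t$.
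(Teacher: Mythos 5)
Your proof is correct and follows essentially the same route as the paper's: coarea reduction via Lemma \ref{lm:coarea}, the pointwise limit $(1-s)\,P_s(E_t)\to 2\,\omega_{N-1}\,P(E_t)$ imported from \cite[Theorem 4]{Lu}, and Lebesgue dominated convergence. The only (harmless) difference is the choice of majorant: the paper bounds $(1-s)\,P_s(E_t)\le C\,P(E_t)$ via Corollary \ref{coro:nostroerik}, using the compact support through the diameter, whereas you use Corollary \ref{coro:relazioni} plus Young's inequality to get $C_N\,(P(E_t)+|E_t|)$, which is likewise integrable in $t$ since $u\in L^1(\mathbb{R}^N)$.
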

\begin{proof}
First of all, we remark that $[u]_{W^{s,1}(\mathbb{R}^N)}<+\infty$ for every $s < 1$, thanks to Proposition \ref{lm:nostro}. By the coarea formula \eqref{coarea}
\[ 
(1-s)\,[u]_{W^{s,1}(\mathbb{R}^N)} = (1-s)\int_0^{+\infty} P_s(\Omega_t)\,dt,
\]
where we set $\Omega_t :=\{|u|>t\}$. Since by definition $P_s(\Omega_t) = [1_{\Omega_t}]_{W^{s,1}(\mathbb{R}^N)}$, by Corollary \ref{coro:nostroerik} we have that\footnote{The constant $C$ only depends on the dimension $N$ and the diameter of $\mathrm{spt\,} u$, for $s$ close to $1$.} 
$$
(1-s)\,P_s(\Omega_t) \leq C\, P(\Omega_t),\qquad t>0,
$$ 
where $P$ denotes the usual distributional perimeter. By using the usual coarea formula for $BV$ functions (see \cite{AFP}), we get 
\[ 
\int_0^{+\infty} P(\Omega_t)\,dt = |\nabla u|(\mathbb{R}^N) < +\infty.
\]
On the other hand, by \cite[Theorem 4]{Lu} we have\footnote{The reader should pay attention to the fact that our definition of $P_s(\Omega)$ differs from that of \cite{Lu} by a multiplicative factor $2$.}
\[ 
\lim_{s \nearrow 1} (1-s)\,P_s(\Omega_t) =2\, \omega_{N-1}\, P(\Omega_t).
\]
We point out that the constant $\omega_{N-1}$ can be deduced from formula (4) in \cite{Lu}.
Therefore it is possible to apply Lebesgue's Dominated Convergence Theorem in order to obtain
\begin{align*} 
\lim_{s \nearrow 1} (1-s)\,[u]_{W^{s,1}(\mathbb{R}^N)} & = \lim_{s \nearrow 1} (1-s)\int_0^{+\infty} P_s(\Omega_t)\,dt  = 2\, \omega_{N-1} \int_0^{+\infty} P(\Omega_t)\,dt =2\, \omega_{N-1}\, |\nabla u|(\mathbb{R}^N),
\end{align*}
thus concluding the proof.
\end{proof}
We also recall the sharp Sobolev inequality in $W^{s,1}_0(\mathbb{R}^N)$, which is nothing but a functional version of the isoperimetric inequality \eqref{siso}. 
\begin{teo}[Sobolev inequality in $W^{s,1}_0(\mathbb{R}^N)$]
Let $N\ge 2$ and $s\in(0,1)$, then
\begin{equation}
\label{isoper}
\min_{u\in W^{s,1}_0(\mathbb{R}^N)\setminus\{0\}} \frac{\displaystyle\int_{\mathbb{R}^N} \int_{\mathbb{R}^N} \frac{|u(x)-u(y)|}{|x-y|^{N+s}}\, dx\,dy}{\displaystyle\left(\int_{\mathbb{R}^N} |u|^\frac{N}{N-s}\, dx\right)^\frac{N-s}{N}}=P_s(B)\, |B|^\frac{s-N}{N},
\end{equation}
where $B$ is any $N-$dimensional ball. The minimum in \eqref{isoper} is attained by any characteristic function of an $N-$dimensional ball.
\end{teo}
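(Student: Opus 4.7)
The strategy is to reduce the functional inequality to the geometric isoperimetric inequality \eqref{siso} via the Coarea Formula of Lemma \ref{lm:coarea}, combined with Minkowski's integral inequality to handle the $L^{N/(N-s)}$ norm. The attainment is then verified directly on $u=1_B$.

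\textbf{Step 1 (lower bound via coarea and isoperimetry).} Fix $u\in W^{s,1}_0(\mathbb{R}^N)\setminus\{0\}$ and set $\Omega_t=\{x\,:\,|u(x)|>t\}$. By Lemma \ref{lm:coarea} we have
\[
[u]_{W^{s,1}(\mathbb{R}^N)}=\int_0^\infty P_s(\Omega_t)\,dt,
\]
and for almost every $t>0$ the set $\Omega_t$ has finite $s$-perimeter. Applying the fractional isoperimetric inequality \eqref{siso} to $\Omega_t$ yields
\[
P_s(\Omega_t)\;\ge\; P_s(B)\,|B|^{(s-N)/N}\,|\Omega_t|^{(N-s)/N},
\]
for every ball $B$. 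Integrating in $t$, I obtain
\[
[u]_{W^{s,1}(\mathbb{R}^N)}\;\ge\; P_s(B)\,|B|^{(s-N)/N}\,\int_0^\infty |\Omega_t|^{(N-s)/N}\,dt.
\]

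\textbf{Step 2 (Minkowski's integral inequality).} To conclude I need to bound the last integral from below by $\|u\|_{L^{N/(N-s)}(\mathbb{R}^N)}$. Writing the layer-cake decomposition $|u(x)|=\int_0^\infty 1_{\Omega_t}(x)\,dt$ and observing that $q:=N/(N-s)>1$, Minkowski's integral inequality gives
\[
\left(\int_{\mathbb{R}^N}|u|^q\,dx\right)^{1/q}=\left\|\int_0^\infty 1_{\Omega_t}\,dt\right\|_{L^q(\mathbb{R}^N)}\;\le\;\int_0^\infty \|1_{\Omega_t}\|_{L^q(\mathbb{R}^N)}\,dt=\int_0^\infty |\Omega_t|^{(N-s)/N}\,dt.
\]
Combining with Step 1 proves that the ratio in \eqref{isoper} is bounded below by $P_s(B)\,|B|^{(s-N)/N}$ for every admissible $u$.

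\textbf{Step 3 (attainment and minimizers).} Testing the ratio with $u=1_B$ for a ball $B$ gives $[1_B]_{W^{s,1}(\mathbb{R}^N)}=P_s(B)$ and $\|1_B\|_{L^{N/(N-s)}(\mathbb{R}^N)}=|B|^{(N-s)/N}$, so the ratio equals exactly $P_s(B)\,|B|^{(s-N)/N}$. Scale invariance of this quantity (both sides of \eqref{isoper} are homogeneous of degree zero under dilations) shows that the value is independent of the radius, so any ball works, and the infimum is in fact a minimum.

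\textbf{Expected main difficulty.} The two ingredients are essentially packaged for us: coarea is Lemma \ref{lm:coarea} and the sharp constant in the set-isoperimetric inequality is \eqref{siso}. The only nontrivial analytic step is the Minkowski integral estimate of Step 2, but this is standard and immediate once one notices that $N/(N-s)\ge 1$. Hence I expect no serious obstacle; the content of the theorem is really the equivalence between the sharp functional and geometric forms of the isoperimetric inequality in $W^{s,1}_0$.
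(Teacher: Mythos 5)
Your proof is correct and follows essentially the same route as the paper: coarea formula, fractional isoperimetric inequality on the level sets, and the Cavalieri-type bound $\bigl(\int |u|^{N/(N-s)}\bigr)^{(N-s)/N}\le \int_0^\infty |\Omega_t|^{(N-s)/N}\,dt$, which the paper cites from Maz'ja while you derive it on the spot via the layer-cake decomposition and Minkowski's integral inequality. The only cosmetic difference is that the paper first reduces to nonnegative $u$ before applying the coarea formula, but your use of $|u|$ is equally valid since $[\,|u|\,]_{W^{s,1}(\mathbb{R}^N)}\le [u]_{W^{s,1}(\mathbb{R}^N)}$.
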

\begin{proof}
We at first observe that it is sufficient to prove the result for positive functions.
Let $u\in W^{s,1}_0(\mathbb{R}^N)$ be positive and let us indicate with $\mu$ its distribution function
\[
\mu(t)=|\{x\, : \, u(x)>t\}|.
\]
By using the Cavalieri principle we get the following estimate (see \cite[Section 1.3.3]{Ma})
\[
\left(\int_{\mathbb{R}^N} |u|^\frac{N}{N-s}\, dx\right)^\frac{N-s}{N}\le \int_0^\infty \mu(t)^\frac{N-s}{N}\, dt.
\]
Using the latter, \eqref{coarea} and the isoperimetric inequality \eqref{siso}, we get the estimate
\[
\begin{split}
\frac{\displaystyle\int_{\mathbb{R}^N} \int_{\mathbb{R}^N} \frac{|u(x)-u(y)|}{|x-y|^{N+s}}\, dx\,dy}{\displaystyle\left(\int_{\mathbb{R}^N} |u|^\frac{N}{N-s}\right)^\frac{N-s}{N}}&\ge \frac{\displaystyle\int_{0}^\infty P_s(\{u>t\})\, dt}{\displaystyle\int_0^\infty \mu(t)^\frac{N-s}{N}\, dt}\ge P_s(B)\, |B|^\frac{s-N}{N},
\end{split}
\]
for all $u\in W^{s,1}_0(\mathbb{R}^N)$. On the other hand, by taking $u=1_B$ with $B$ any $N-$dimensional ball, we get equality in the previous. 
\end{proof}

\section{The nonlocal Cheeger constant}
\label{sec:constant}
\begin{defi}
Let $s\in(0,1)$. For every open and bounded set $\Omega\subset\mathbb{R}^N$ we define its {\it $s-$Cheeger constant} by
\begin{equation}
\label{def_ch}
h_s(\Omega)=\inf_{E\subset\Omega} \frac{P_s(E)}{|E|}.
\end{equation}
A set $E_\Omega\subset\Omega$ achieving the infimum in the previous problem is said to be an {\it $s-$Cheeger set} of $\Omega$. Also, we say that $\Omega$ is {\it $s-$calibrable} if it is an $s-$Cheeger set of itself, i.e. if
\[
h_s(\Omega)=\frac{P_s(\Omega)}{|\Omega|}.
\]
\end{defi}
\begin{oss}
As in the local case, any ball $B\subset\mathbb{R}^N$ is $s-$calibrable. This is a direct consequence of the isoperimetric inequality \eqref{siso}, which gives for every $E\subset B$
\[
\frac{P_s(E)}{|E|}\ge \frac{P_s(B)}{|B|}\, \left(\frac{|B|}{|E|}\right)^\frac{s}{N}\ge \frac{P_s(B)}{|B|}.
\]
\end{oss}
\begin{prop}
Let $s\in(0,1)$, every $\Omega\subset\mathbb{R}^N$ open and bounded admits an $s-$Cheeger set. Moreover, if $E_\Omega$ is an $s-$Cheeger set of $\Omega$, then $\partial E_\Omega\cap\partial\Omega\not=\emptyset$.
\end{prop}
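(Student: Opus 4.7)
The plan is to attack existence by the direct method and boundary contact by a scaling argument.

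For the existence part, I would take a minimizing sequence $\{E_n\}_{n\in\mathbb{N}}$ with $E_n\subset\Omega$ and $P_s(E_n)/|E_n|\to h_s(\Omega)$. The first step is to rule out degeneration $|E_n|\to 0$. This follows from the fractional isoperimetric inequality \eqref{siso}: one has $P_s(E_n)/|E_n|\ge c\,|E_n|^{-s/N}$, so boundedness of the ratio (guaranteed since $h_s(\Omega)\le P_s(B)/|B|<+\infty$ for any small ball $B\Subset\Omega$) forces $\inf_n|E_n|>0$. In particular $h_s(\Omega)>0$ and $\sup_n P_s(E_n)<+\infty$, so $\{1_{E_n}\}$ is bounded in $\mathcal{W}^{s,1}_0(\Omega)$. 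Theorem \ref{teo:compact} then yields a subsequence converging in $L^1(\Omega)$ and hence almost everywhere to a function which, being an a.e.\ limit of $\{0,1\}$-valued functions, is itself $1_E$ for some Borel set $E\subset\Omega$. Lower semicontinuity of the Gagliardo seminorm under a.e.\ convergence (Fatou's lemma applied to the integrand $|1_{E_n}(x)-1_{E_n}(y)|/|x-y|^{N+s}$) gives $P_s(E)\le\liminf P_s(E_n)$, and $|E|=\lim|E_n|>0$, whence $P_s(E)/|E|\le h_s(\Omega)$. The reverse inequality is trivial, so $E$ is an $s$-Cheeger set of $\Omega$.

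For the boundary-contact statement, I would argue by contradiction. If $\partial E_\Omega\cap\partial\Omega=\emptyset$, then $\overline{E_\Omega}$ is compact and contained in the open set $\Omega$; in particular $\mathrm{dist}(\overline{E_\Omega},\partial\Omega)>0$. Fixing any $x_0\in E_\Omega$, for $t$ slightly larger than $1$ the dilated set $E_t:=x_0+t(E_\Omega-x_0)$ still lies in $\Omega$. The homogeneity relations $P_s(E_t)=t^{N-s}P_s(E_\Omega)$ and $|E_t|=t^N|E_\Omega|$ then give
\[
\frac{P_s(E_t)}{|E_t|}=t^{-s}\,\frac{P_s(E_\Omega)}{|E_\Omega|}=t^{-s}\,h_s(\Omega)<h_s(\Omega),
\]
which contradicts the definition of $h_s(\Omega)$.

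The main obstacle is the existence part, and specifically ensuring that the $L^1$ limit of the minimizing sequence is the characteristic function of a set of positive measure; both issues are handled by the isoperimetric lower bound, which keeps $\{|E_n|\}$ uniformly bounded away from zero. The boundary-contact step is then just a one-line scaling computation, exploiting the fact that strict containment $\overline{E_\Omega}\Subset\Omega$ leaves room to dilate.
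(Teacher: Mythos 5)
Your proof is correct and follows essentially the same route as the paper: a direct method for existence, using the fractional isoperimetric inequality to bound $|E_n|$ away from zero, the compactness of Theorem \ref{teo:compact} plus Fatou-type lower semicontinuity to pass to the limit, and a dilation argument for the boundary contact. The only cosmetic difference is that you center the dilation at a point of $E_\Omega$ while the paper dilates about the origin; both are equivalent by translation invariance.
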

\begin{proof}
First of all, we observe that $h_s(\Omega)<+\infty$, i.e. there exists at least an admissible set such that the ratio defining $h_s(\Omega)$ is finite. Indeed, since $\Omega$ is open, it contains a ball $B_r$ and for this $P_s(B_r)<+\infty$.
We then take a minimizing sequence $\{E_n\}_{n\in\mathbb{N}}\subset \Omega$ and we can obviously suppose that
\[
\frac{P_s(E_n)}{|E_n|}\le h_s(\Omega)+1,\qquad \mbox{ for every }n\in\mathbb{N}.
\] 
As $|E_n|\le |\Omega|$, the previous immediately gives a uniform bound on the $s-$perimeter of the sequence $\{E_n\}_{n\in\mathbb{N}}$. Moreover, by combining the previous and \eqref{siso}, we get
\[
|E_n|^\frac{N-s}{N}\, \left(\frac{P_s(B)}{|B|^\frac{N-s}{N}}\right)\le (h_s(\Omega)+1)\,|E_n|,
\]
which in turn implies
\begin{equation}
\label{lowerbound}
|E_n|\ge c_{N,\Omega,s}>0.
\end{equation}
Then we get
\[
[1_{E_n}]_{W^{s,1}(\mathbb{R}^N)}+\|1_{E_n}\|_{L^1}\le C, \qquad \mbox{ for every }n\in\mathbb{N}.
\]
By appealing to Theorem \ref{teo:compact}, this in turn implies that the sequence $\{1_{E_n}\}_{n\in\mathbb{N}}$ is strongly converging (up to a subsequence, not relabeled) in $L^1$ to a function $\varphi$, which has the form $\varphi=1_{E_\Omega}$ for some measurable set $E_\Omega\subset\Omega$. Thanks to \eqref{lowerbound}, we can also assure that $|E_\Omega|>0$. By using the latter and the lower semicontinuity of the Gagliardo seminorms, we get
\[
\frac{[1_{E_\Omega}]_{W^{s,1}(\mathbb{R}^N)}}{|E_\Omega|}\le \liminf_{n\to\infty}\frac{[1_{E_n}]_{W^{s,1}(\mathbb{R}^N)}}{|E_n|}=h_s(\Omega).
\]
This concludes the proof of the existence.
\vskip.2cm\noindent
Let us now prove the second statement. Assume by contradiction that $E_\Omega \Subset \Omega$. Then, for $t > 1$ sufficiently close to $1$, the scaled set $t\,E_\Omega$ is still contained in $\Omega$. We have
\[ 
\frac{P_s(t\,E_\Omega)}{|t\,E_\Omega|}= \frac{t^{N-s}\,P_s(E_\Omega)}{t^N |E_\Omega|} = t^{-s} h_s(\Omega) < h_s(\Omega), 
\]
contradicting the minimality of $E_\Omega$. Hence we obtain the claim.
\end{proof}
\begin{oss}
We have seen that an $s-$Cheeger $E_\Omega$ of $\Omega$ has to touch the boundary $\partial\Omega$. Actually, the previous proof shows that $E_\Omega$ has the following slightly stronger property: $t\, E_\Omega$ is not contained in $\Omega$ for any $t>1$.
\end{oss}
It is not difficult to see that balls (uniquely) minimize the $s-$Cheeger constant among sets having given $N-$dimensional measure. This can be seen as a limit case of the Faber-Krahn inequality \eqref{FK}.
\begin{prop}
Let $s\in(0,1)$, for every open and bounded set $\Omega\subset\mathbb{R}^N$ we have
\begin{equation}
\label{FKcheeger}
|\Omega|^\frac{s}{N}\, h_s(\Omega)\ge |B|^\frac{s}{N}\, h_s(B),
\end{equation}
where $B$ is any $N-$dimensional ball. Equality in \eqref{FKcheeger} holds if and only if $\Omega$ itself is a ball.
\end{prop}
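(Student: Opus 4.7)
The proof hinges on combining the fractional isoperimetric inequality \eqref{siso} with the fact, noted in the preceding remark, that any ball is $s$-calibrable. The plan is as follows.

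First, I would pick any $s$-Cheeger set $E_\Omega$ of $\Omega$, whose existence is guaranteed by the previous proposition, and a ball $B\subset\mathbb{R}^N$. Applying \eqref{siso} to $E_\Omega$ and dividing by $|E_\Omega|$ gives
\[
\frac{P_s(E_\Omega)}{|E_\Omega|}\ge \frac{P_s(B)}{|B|^{(N-s)/N}}\,|E_\Omega|^{-s/N},
\]
so that, after multiplying both sides by $|E_\Omega|^{s/N}$,
\[
|E_\Omega|^{s/N}\,h_s(\Omega)=|E_\Omega|^{s/N}\,\frac{P_s(E_\Omega)}{|E_\Omega|}\ge |B|^{s/N}\,\frac{P_s(B)}{|B|}=|B|^{s/N}\,h_s(B),
\]
where the last identity uses the $s$-calibrability of $B$. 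Since $E_\Omega\subset \Omega$ yields $|E_\Omega|\le |\Omega|$, and $s/N>0$, we conclude
\[
|\Omega|^{s/N}\,h_s(\Omega)\ge |E_\Omega|^{s/N}\,h_s(\Omega)\ge |B|^{s/N}\,h_s(B),
\]
which is \eqref{FKcheeger}.

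For the equality case, suppose that $|\Omega|^{s/N}\,h_s(\Omega)= |B|^{s/N}\,h_s(B)$. Then both inequalities in the chain above must be equalities. The first one, $|\Omega|^{s/N}=|E_\Omega|^{s/N}$, forces $|E_\Omega|=|\Omega|$, so that $E_\Omega$ coincides with $\Omega$ up to a negligible set. The second one forces equality in the fractional isoperimetric inequality \eqref{siso} applied to $E_\Omega$, which by \cite{FS,FMM} implies that $E_\Omega$ is (equivalent to) a ball. Combining the two pieces of information, $\Omega$ must itself be a ball; indeed $\Omega\setminus E_\Omega$ has zero Lebesgue measure, and since $\Omega$ is open this forces $\Omega$ to coincide with the open ball associated to $E_\Omega$.

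I do not expect any substantial obstacle: the argument is essentially a one-line rearrangement of the isoperimetric inequality, and the calibrability remark handles the right-hand side. The only mildly delicate point is the equality case, where one has to argue that saturation at both places is incompatible with $\Omega$ being anything other than a ball, but this follows cleanly from the sharp version of \eqref{siso} recalled from \cite{FS,FMM}.
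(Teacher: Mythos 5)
Your argument is correct and is essentially the paper's own proof: both rest on applying the fractional isoperimetric inequality \eqref{siso} to an $s$-Cheeger set $E_\Omega$, using $|E_\Omega|\le|\Omega|$, and identifying $P_s(B)/|B|$ with $h_s(B)$ via the calibrability of balls (the paper merely normalizes $|B|=|\Omega|$ first instead of multiplying by $|E_\Omega|^{s/N}$). Your discussion of the equality case is in fact more explicit than the paper's one-line remark, and it is sound provided, as usual, ``$\Omega$ is a ball'' is understood up to sets of measure zero.
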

\begin{proof}
Let $B$ be a ball such that $|\Omega|=|B|$ and let $E_\Omega$ be an $s-$Cheeger set for $\Omega$. By using \eqref{siso} we have
\[
h_s(\Omega)=\frac{P_s(E_\Omega)}{|E_\Omega|}\ge \frac{P_s(B)}{|B|}\, \left(\frac{|B|}{|E_\Omega|}\right)^\frac{s}{N}\ge \frac{P_s(B)}{|B|}= h_s(B),
\]
where we used that $|E_\Omega|\le |\Omega|=|B|$. The characterization of equality cases directly follows from the equality cases in \eqref{siso}.
\end{proof}
Thanks to Corollary \ref{coro:relazioni}, the nonlocal quantity $h_s(\Omega)$ can be estimated in terms of the usual (local) Cheeger constant $h_1(\Omega)$. 
\begin{prop}
\label{prop:hsh1}
Let $s\in(0,1)$ and $\Omega\subset\mathbb{R}^N$ be an open bounded set. Then we have
\[
h_s(\Omega)\le \frac{2^{1-s}\, N\,\omega_N}{(1-s)\, s}\, h_1(\Omega)^s.
\]
\end{prop}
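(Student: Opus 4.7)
The plan is to obtain the bound simply by combining the definition of $h_s(\Omega)$ with the interpolation-type estimate between the fractional and classical perimeters already furnished by Corollary \ref{coro:relazioni}. No delicate analysis is needed: the inequality is essentially algebraic once the pointwise comparison between $P_s(E)$ and $P(E)$ is in hand.

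First I would fix an arbitrary admissible set $E\subset\Omega$ of finite (classical) perimeter. By Corollary \ref{coro:relazioni},
\[
P_s(E)\le \frac{2^{1-s}\,N\,\omega_N}{(1-s)\,s}\,P(E)^s\,|E|^{1-s}.
\]
Dividing both sides by $|E|$ gives
\[
\frac{P_s(E)}{|E|}\le \frac{2^{1-s}\,N\,\omega_N}{(1-s)\,s}\,\left(\frac{P(E)}{|E|}\right)^s.
\]
Next, since $h_s(\Omega)$ is the infimum of the left-hand side over all admissible $E\subset\Omega$, and since taking the infimum on the right-hand side over the same class yields precisely $h_1(\Omega)^s$ (using that $t\mapsto t^s$ is monotone increasing for $s\in(0,1)$), I would pass to the infimum on both sides to obtain
\[
h_s(\Omega)\le \frac{2^{1-s}\,N\,\omega_N}{(1-s)\,s}\,h_1(\Omega)^s.
\]

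The only subtlety worth mentioning is that the argument restricts attention to admissible sets $E\subset\Omega$ with $P(E)<+\infty$: if no such set existed then $h_1(\Omega)=+\infty$ and the claim would be trivial, while if one exists one may run the above inequality on any minimizing sequence for $h_1(\Omega)$ (or on a classical Cheeger set, if available) and conclude. There is no real obstacle here, since for open bounded $\Omega$ the class of finite-perimeter subsets is nonempty (balls $B_r\Subset\Omega$ work), so the infimum on the right is finite and the chain of inequalities is legitimate.
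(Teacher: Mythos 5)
Your proposal is correct and is essentially the paper's own argument: both rest entirely on Corollary \ref{coro:relazioni}, the paper simply applying it to a classical Cheeger set $E$ of $\Omega$ (so that $P(E)/|E|=h_1(\Omega)$ and $P_s(E)/|E|\ge h_s(\Omega)$), while you pass to infima over a minimizing sequence. The infimum manipulation you use is valid, so this is only a cosmetic difference.
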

\begin{proof}
Let $E\subset \Omega$ be a Cheeger set, then by using Corollary \ref{coro:relazioni} we get
\[
h_1(\Omega)^s=\left(\frac{P(E)}{|E|}\right)^s\ge \frac{(1-s)\,s}{2^{1-s}\,N\,\omega_N}\, \frac{P_s(E)}{|E|},
\]
which gives the conclusion.
\end{proof} 

We now provide an equivalent definition of $h_s(\Omega)$. Let us define
\[
\lambda^s_{1,1}(\Omega)=\inf_{u\in \widetilde W^{s,1}_0(\Omega)} \left\{[u]_{W^{s,1}(\mathbb{R}^N)}\, :\, \|u\|_{L^1(\Omega)}=1,\ u\ge 0\right\}.
\]
This variational problem in general has a ``relaxed'' solution, i.e. this infimum is attained in the larger space $\mathcal{W}^{s,1}_0(\Omega)$, at least for $\Omega$ smooth enough. This is the content of the next result.
\begin{lm}
\label{lm:salvaculo}
Let $s\in(0,1)$ and $\Omega\subset\mathbb{R}^N$ be an open and bounded Lipschitz set. Then \begin{equation}
\label{smoke}
\lambda^s_{1,1}(\Omega)=\min_{u\in \mathcal{W}^{s,1}_0(\Omega)} \left\{[u]_{W^{s,1}(\mathbb{R}^N)}\, :\, \|u\|_{L^1(\Omega)}=1,\ u\ge 0\right\},
\end{equation}
and the minimum on the right is attained.
\end{lm}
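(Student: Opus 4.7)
The statement contains two distinct assertions: the attainment of the infimum on the right-hand side, and the equality of the two infima. I would handle them separately in that order.

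For attainment, I would take a minimizing sequence $\{u_n\}\subset\mathcal{W}^{s,1}_0(\Omega)$ with $u_n\ge 0$ and $\|u_n\|_{L^1(\Omega)}=1$. By construction $[u_n]_{W^{s,1}(\mathbb{R}^N)}$ is bounded, so the sequence is bounded in the norm topology used to define $\widetilde W^{s,1}_0(\Omega)$. Applying Theorem \ref{teo:compact} in the case $p=1$, up to a subsequence $u_n\to u$ in $L^1(\Omega)$ with $u\in\mathcal{W}^{s,1}_0(\Omega)$. The constraints pass to the limit: $\|u\|_{L^1(\Omega)}=1$ from $L^1$ convergence and $u\ge 0$ from the pointwise limit (after a further a.e. extraction). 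Lower semicontinuity of the Gagliardo seminorm along an a.e. convergent subsequence, via Fatou's Lemma applied to the double integral, gives $[u]_{W^{s,1}(\mathbb{R}^N)}\le\liminf_n[u_n]_{W^{s,1}(\mathbb{R}^N)}$, so $u$ realizes the minimum.

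For the equality of the two values, the inequality $\min_{\mathcal{W}^{s,1}_0}\le\inf_{\widetilde W^{s,1}_0}$ is immediate from the inclusion $\widetilde W^{s,1}_0(\Omega)\subset\mathcal{W}^{s,1}_0(\Omega)$. The reverse direction is exactly what Lemma \ref{lm:LS} is designed for, and it is precisely here that the Lipschitz regularity of $\Omega$ enters: given the minimizer $u$ just obtained, that lemma produces $\varphi_n\in C^\infty_0(\Omega)\subset\widetilde W^{s,1}_0(\Omega)$ with $\varphi_n\to u$ in $L^1(\Omega)$ and $[\varphi_n]_{W^{s,1}(\mathbb{R}^N)}\to [u]_{W^{s,1}(\mathbb{R}^N)}$. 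Normalizing $\widetilde\varphi_n=\varphi_n/\|\varphi_n\|_{L^1(\Omega)}$ (which is well-defined for $n$ large since $\|\varphi_n\|_{L^1(\Omega)}\to 1$), these are admissible test functions in the definition of $\lambda^s_{1,1}(\Omega)$ whose seminorms converge to $[u]_{W^{s,1}(\mathbb{R}^N)}$, proving $\lambda^s_{1,1}(\Omega)\le [u]_{W^{s,1}(\mathbb{R}^N)}$.

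The main obstacle is the sign constraint $u\ge 0$ in the definition of $\lambda^s_{1,1}(\Omega)$: the functions $\varphi_n$ coming from Lemma \ref{lm:LS} are in general not non-negative. The cleanest workaround is to note that, just as in the $p>1$ case recorded immediately after definition \eqref{p}, the non-negativity constraint in $\lambda^s_{1,1}(\Omega)$ can be dropped without changing the value, because $\big||u(x)|-|u(y)|\big|\le |u(x)-u(y)|$ and $\big\||u|\big\|_{L^1(\Omega)}=\|u\|_{L^1(\Omega)}$. With this observation the signed $\widetilde\varphi_n$ are legitimate competitors and the conclusion follows. If one insists on keeping the sign constraint in place, the alternative is to replace $\varphi_n$ by $|\varphi_n|$ (which lies in $\mathcal{W}^{s,1}_0(\Omega)$, has unchanged $L^1$ norm and not larger seminorm) and then re-approximate in $\widetilde W^{s,1}_0(\Omega)$ by mollification, using a diagonal extraction; but the first route is shorter and is the one I would follow.
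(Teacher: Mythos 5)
Your proposal is correct and follows essentially the same route as the paper: the easy inequality from the inclusion $\widetilde W^{s,1}_0(\Omega)\subset\mathcal{W}^{s,1}_0(\Omega)$, attainment of the relaxed minimum via Theorem \ref{teo:compact} and lower semicontinuity of the seminorm, and then Lemma \ref{lm:LS} applied to the minimizer to produce admissible smooth competitors for $\lambda^s_{1,1}(\Omega)$. Your explicit handling of the sign constraint (observing, as after \eqref{p}, that it can be dropped since $\bigl||u(x)|-|u(y)|\bigr|\le|u(x)-u(y)|$) is a detail the paper passes over silently, and it is resolved correctly.
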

\begin{proof}
Of course, since $\widetilde W^{s,1}_0(\Omega)\subset\mathcal{W}^{s,1}_0(\Omega)$, we have
\begin{equation}
\label{right}
\inf_{u\in \mathcal{W}^{s,1}_0(\Omega)} \left\{[u]_{W^{s,1}(\mathbb{R}^N)}\, :\, \|u\|_{L^1(\Omega)}=1,\ u\ge 0\right\}\le \lambda^s_{1,1}(\Omega),
\end{equation}
then we just have to show the reverse inequality. At first, we observe that the infimum in the left-hand side of \eqref{right} is attained by some function $u_0\in \mathcal{W}^{s,1}_0(\Omega)$, again thanks to Theorem \ref{teo:compact}.
Then we observe that since $\Omega$ is Lipschitz, by Lemma \ref{lm:LS} there exists a sequence $\{\varphi_n\}_{n\in\mathbb{N}}\subset C^\infty_0(\Omega)$ such that
\[
\lim_{n\to\infty} \|\varphi_n-u_0\|_{L^1(\Omega)}=0\qquad \mbox{ and } \qquad \lim_{n\to\infty}[\varphi_n]_{W^{s,1}(\mathbb{R}^N)}=[u_0]_{W^{s,1}(\mathbb{R}^N)}.
\]
As $C^\infty_0(\Omega)\subset\widetilde W^{s,1}_0(\Omega)$, by appealing to the definition of $\lambda^s_{1,1}(\Omega)$ we get
\[
\lambda^s_{1,1}(\Omega)\le \lim_{n\to\infty}\frac{[\varphi_n]_{W^{s,1}(\mathbb{R}^N)}}{\|\varphi_n\|_{L^1(\Omega)}}=[u_0]_{W^{s,1}(\mathbb{R}^N)}.
\]
By using the minimality of $u_0$ and \eqref{right}, we get \eqref{smoke}.
\end{proof}
Then the main result of this section is the following characterization of $h_s(\Omega)$.
\begin{teo}
\label{teo:primo1}
Let $s\in(0,1)$ and let $\Omega\subset\mathbb{R}^N$ be an open and bounded set. For every $u\in \mathcal{W}^{s,1}_0(\Omega)\setminus\{0\}$, we have
\begin{equation}
\label{pensaci!}
\frac{[u]_{W^{s,1}(\mathbb{R}^N)}}{\|u\|_{L^1(\Omega)}}\ge h_s(\Omega).
\end{equation} 
Moreover, if equality holds in \eqref{pensaci!}, then $u$ has the following property: almost every level set of $u$ with positive $N-$dimensional Lebesgue measure is an $s-$Cheeger set of $\Omega$.
\par
Finally, if $\Omega$ has Lipschitz boundary then
\begin{equation}
\label{uguali}
\lambda^s_{1,1}(\Omega)=h_s(\Omega).
\end{equation}
\end{teo}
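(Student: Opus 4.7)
The plan is to reduce everything to the coarea formula of Lemma \ref{lm:coarea} combined with the elementary Cavalieri identity $\|u\|_{L^1(\mathbb{R}^N)}=\int_0^\infty|\{|u|>t\}|\,dt$. Since $u\in\mathcal{W}^{s,1}_0(\Omega)$ vanishes a.e.\ outside $\Omega$, for almost every $t>0$ the super-level set $\{|u|>t\}$ is contained in $\Omega$, hence it is a competitor in the definition \eqref{def_ch} of $h_s(\Omega)$. Thus for a.e.\ $t$ with $|\{|u|>t\}|>0$ we have the pointwise inequality
\[
P_s(\{|u|>t\})\ge h_s(\Omega)\,|\{|u|>t\}|.
\]
Applying the coarea formula on the left and Cavalieri's principle on the right, integration in $t$ yields \eqref{pensaci!} immediately. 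Note no regularity of $\Omega$ is needed for this half.

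For the equality case, if \eqref{pensaci!} is saturated then
\[
\int_0^\infty\Big[P_s(\{|u|>t\})-h_s(\Omega)\,|\{|u|>t\}|\Big]\,dt=0,
\]
and since the integrand is nonnegative it must vanish for a.e.\ $t>0$. Hence for a.e.\ $t$ with $|\{|u|>t\}|>0$ the level set $\{|u|>t\}$ attains the infimum defining $h_s(\Omega)$, i.e.\ it is an $s$-Cheeger set of $\Omega$. This is exactly the desired structural information.

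For the final statement \eqref{uguali} under the Lipschitz assumption, one inequality is immediate: since $\widetilde W^{s,1}_0(\Omega)\subset \mathcal{W}^{s,1}_0(\Omega)$, \eqref{pensaci!} applied to any admissible $u$ in the definition of $\lambda^s_{1,1}(\Omega)$ gives $\lambda^s_{1,1}(\Omega)\ge h_s(\Omega)$. For the reverse inequality, let $E_\Omega\subset\Omega$ be an $s$-Cheeger set (existence was established just before this theorem). Then $1_{E_\Omega}$ lies in $\mathcal{W}^{s,1}_0(\Omega)$ because it is in $L^1(\Omega)$, it vanishes outside $\Omega$, and $[1_{E_\Omega}]_{W^{s,1}(\mathbb{R}^N)}=P_s(E_\Omega)<+\infty$. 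By Lemma \ref{lm:salvaculo}, under the Lipschitz assumption $\lambda^s_{1,1}(\Omega)$ equals the minimum over $\mathcal{W}^{s,1}_0(\Omega)$, so
\[
\lambda^s_{1,1}(\Omega)\le \frac{[1_{E_\Omega}]_{W^{s,1}(\mathbb{R}^N)}}{\|1_{E_\Omega}\|_{L^1(\Omega)}}=\frac{P_s(E_\Omega)}{|E_\Omega|}=h_s(\Omega),
\]
concluding the proof.

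The only subtlety is the last step: the characteristic function $1_{E_\Omega}$ is not in $\widetilde W^{s,1}_0(\Omega)$ in general (it cannot be approximated by $C^\infty_0(\Omega)$ in the norm $[\,\cdot\,]_{W^{s,1}}+\|\cdot\|_{L^1}$), which is precisely why one cannot omit the Lipschitz hypothesis and why Lemma \ref{lm:salvaculo} — which relaxes the infimum to $\mathcal{W}^{s,1}_0(\Omega)$ and identifies the two values — is indispensable. Everything else is a direct, almost formal, application of coarea.
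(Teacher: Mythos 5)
Your proof is correct and follows essentially the same route as the paper: the inequality and the equality case come from the coarea formula of Lemma \ref{lm:coarea} combined with Cavalieri's principle, and \eqref{uguali} is obtained from \eqref{pensaci!} together with Lemma \ref{lm:salvaculo}. The only cosmetic difference is that for the inequality $\lambda^s_{1,1}(\Omega)\le h_s(\Omega)$ you test the relaxed problem with the normalized characteristic function of a Cheeger set, whereas the paper phrases the same comparison abstractly (characteristic functions of subsets of $\Omega$ form a narrower admissible class); both arguments are equally valid.
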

\begin{proof}
The proof of the first part is based on the Coarea Formula of Lemma \ref{lm:coarea}. Without loss of generality, we can suppose that $u$ is positive. Then by \eqref{coarea}, Cavalieri formula and the definition of $h_s(\Omega)$ we get
\[
\frac{[u]_{W^{s,1}(\mathbb{R}^N)}}{\|u\|_{L^1(\Omega)}}=\frac{\displaystyle\int_{0}^\infty P_s(\{x\, :\, u(x)>t\})\, dt}{\displaystyle\int_{0}^\infty |\{x\, :\, u(x)>t\}|\, dt}\ge h_s(\Omega),
\] 
which proves \eqref{pensaci!}. The property of the level sets of an optimal function $u$ is a consequence of the previous estimate, since if equality holds then we must have
\[
P_s(\{x\, :\, u(x)>t\})=h_s(\Omega)\, |\{x\, :\, u(x)>t\}|,
\]
for almost every level $t$.
\vskip.2cm\noindent
In order to prove \eqref{uguali}, we at first observe that the previous estimate easily implies
\[
\lambda^s_{1,1}(\Omega)\ge h_s(\Omega).
\]
On the other hand, by Lemma \ref{lm:salvaculo} we have that the variational problem giving $\lambda^s_{1,1}(\Omega)$ is the same as $h_s(\Omega)$, but in the latter we restricted the competitors to a narrower class. This implies
\[
\lambda^s_{1,1}(\Omega)\le h_s(\Omega),
\]
so that equality \eqref{uguali} holds. 
\end{proof}

\section{Regularity of $s-$Cheeger sets}
\label{sec:reg}
Following \cite{CRS10}, given two sets $A,B\subset\mathbb{R}^N$ and $0<s<1$, we introduce the following notation
$$
L(A,B)=\int_{A}\int_{B}\frac{1}{|x-y|^{N+s}} \,dx\, dy.
$$
Moreover, if $\Omega\subset\mathbb{R}^N$ is an open set, we define
$$
J_\Omega(E)=L(E\cap \Omega, E^c)+L(E\setminus \Omega,E^c\cap \Omega).
$$
Observe that if $E\subset\Omega$, then
\[
J_\Omega(E)=L(E,E^c)=\frac{1}{2}\, P_s(E).
\]
Using this perimeter-type functional we introduce the notion of {\it nonlocal minimal surfaces} and {\it almost nonlocal minimal surfaces}, in the spirit of \cite{CRS10,CG11}.
\begin{defi} We say that $E$ is a {\it nonlocal minimal surface in $\Omega$} if for any $F$ such that $F\setminus\Omega = E\setminus \Omega$ there holds
$$
J_\Omega(E)\leq J_\Omega(F).
$$
\end{defi}

\begin{defi} 
Let $\delta>0$ and $\omega:(0,\delta)\to \mathbb{R}^+$ be a modulus of continuity. Then we say that $E\subset\mathbb{R}^N$ is {\it $(J_\Omega,\omega,\delta)-$minimal in $\Omega$} if for any  $x_0\in \partial E$ and any set $F$ such that $E\setminus B_r(x_0)=F\setminus B_r(x_0)$ and $r<\min (\delta, \mathrm{dist}(x_0,\partial\Omega))$ we have
$$
J_\Omega(E)\leq J_\Omega (F)+\omega(r)\,r^{N-s}.
$$
We will also simply say that $E$ is {\it almost minimal} in $\Omega$.
\end{defi}
\noindent
We need to recall the following regularity result.

\begin{teo} 
\label{teo:reg}
Assume that $E\subset\mathbb{R}^N$ is $(J_\Omega,C\, r^\alpha,1)-$minimal in $B_1$ for some $\alpha\in (0,s]$ and some $C>0$. Then:

\begin{enumerate}\item  there exists $\delta_0=\delta_0(N,s,\alpha,C)>0$ such that if 
\[
\partial E \cap B_1\subset \{x\, :\, |\langle x,e\rangle|\le \delta_0\},\qquad \mbox{ for some unit vector } e,
\] 
then $\partial E$ is $C^1$ in $B_{1/2}$;
\item outside a singular set having at most Hausdorff dimension $N-2$, $\partial E$ is $C^1$ regular; 
\item in the case $N=2$, the singular set is empty, i.e., $\partial E$ is $C^1$ regular everywhere.
\end{enumerate}
\end{teo}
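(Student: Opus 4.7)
The plan is to deduce the three parts from the now-standard regularity theory for nonlocal minimal surfaces developed in \cite{CRS10, CG11}. Since the definition of $(J_\Omega,\omega,\delta)$-minimality is tailored to fit into that framework, the bulk of the argument is citation and verification that the hypothesis $\omega(r)=C\,r^\alpha$ with $\alpha\in(0,s]$ lies in the regime covered by the cited improvement-of-flatness theorems.

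For (1), I would invoke the flatness-implies-$C^1$ statement for almost minimizers of $J_\Omega$ proved in \cite{CG11}. The scheme is the usual De Giorgi iteration adapted to the nonlocal setting: assume $\partial E\cap B_1$ is trapped in a slab of half-width $\delta_0$; show that for $\delta_0$ small enough there is a universal improvement, i.e.\ $\partial E\cap B_{\eta}$ is trapped in a slab of half-width $\eta\,\delta_0/2$ with respect to a possibly tilted unit vector $e'$, with $\eta$ universal; iterate on balls of radius $\eta^k$ to produce a geometric decay of the defect from flatness, which translates into the $C^{1,\beta}$ regularity of $\partial E$ in $B_{1/2}$. The error term $C\,r^\alpha$ enters additively at each scale and must be absorbed by the geometric decay, which is where $\alpha>0$ is essential; the choice $\alpha\le s$ guarantees homogeneity compatibility with the $s$-perimeter scaling $P_s(tE)=t^{N-s}P_s(E)$.

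For (2), I would run a dimension reduction à la Federer. Pick $x_0$ in the singular set, perform a blow-up $E_k=(E-x_0)/r_k$ with $r_k\to 0$, and observe that the almost-minimality modulus $\omega(r)=C\,r^\alpha$ rescales to $C\,(r_k r)^\alpha\to 0$, so any subsequential limit $E_\infty$ is a genuine $s$-minimal cone. By part (1), non-flat points of $E_\infty$ form a closed subset of strictly lower dimension; combining with the standard slicing/projection argument (Theorem 11.8 in \cite{CRS10}), one concludes that the set of singular points of $\partial E$ has Hausdorff dimension at most $N-2$.

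For (3), the claim reduces to the nonexistence of non-trivial singular $s$-minimal cones in $\mathbb{R}^2$: the only $s$-minimal cones in $\mathbb{R}^2$ are half-planes (this is the $s$-perimeter analogue of the Bernstein theorem in the plane, proved in \cite{CRS10}). Hence every blow-up at an interior point is a half-plane, so the flatness hypothesis of part (1) is automatically satisfied at sufficiently small scales, and $\partial E$ is everywhere $C^1$. The main obstacle I expect is purely bookkeeping: checking that the modulus $C\,r^\alpha$ indeed meets the precise smallness/summability requirements in the cited improvement-of-flatness lemmas, and that the small parameter $\delta_0$ can be chosen depending only on $N,s,\alpha$ and $C$ as claimed.
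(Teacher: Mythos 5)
Your proposal follows essentially the same route as the paper: parts (1) and (2) are delegated to the regularity theory for nonlocal almost minimal boundaries of Caputo--Guillen \cite{CG11} (the paper's proof of these two parts is literally that citation, while you additionally sketch the underlying improvement-of-flatness and dimension-reduction mechanisms, which is consistent with how those results are proved), and part (3) is reduced to the nonexistence of nontrivial singular $s$-minimal cones in the plane, combined with the blow-up characterization of \cite[Theorem 7.4, part 3]{CG11}.

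The one substantive issue is your attribution for part (3). The classification of $s$-minimal cones in $\mathbb{R}^2$ is \emph{not} proved in \cite{CRS10}: that paper establishes only the bound $N-2$ on the dimension of the singular set and explicitly leaves the planar cone problem open. The fact that half-planes are the only $s$-minimal cones in $\mathbb{R}^2$ is the later theorem of Savin and Valdinoci \cite[Theorem 1]{SV12}, which is precisely the ingredient the paper invokes. As written, your argument for part (3) rests on a result that does not appear in the reference you cite; the statement happens to be true, so the gap is reparable by replacing the citation, but in a blind write-up this is the step that would fail if one actually tried to locate the claimed theorem in \cite{CRS10}.
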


\begin{proof}
The first two parts are proved in \cite{CG11}. For the last part we observe that in \cite[Theorem 1]{SV12} it is proved that actually there are no singular cones for $N=2$. By using \cite[Theorem 7.4, part 3]{CG11} this implies that nonlocal almost minimal surfaces are $C^1$ for $N=2$ as well.
\end{proof}
By appealing to the previous result, we can prove our first interior regularity result for an $s-$Cheeger set.
\begin{prop}[Interior regularity] 
\label{coro:C1} 
Let $s\in(0,1)$ and $\Omega\subset\mathbb{R}^N$ be an open and bounded set. Let $E$ be an $s-$Cheeger set of $\Omega$. Then $\partial E\cap \Omega$ is $C^1$, up to a singular set of Hausdorff dimension at most $N-2$. 
In the case $N=2$, $\partial E\cap \Omega$ is $C^1$.
\end{prop}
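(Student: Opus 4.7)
The plan is to verify that every $s$-Cheeger set $E$ of $\Omega$ is almost minimal for the functional $J_\Omega$ in the sense of the definition above, and then to invoke Theorem \ref{teo:reg}.

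First I would fix $x_0\in\partial E\cap\Omega$ and $r<\mathrm{dist}(x_0,\partial\Omega)$, so that $B_r(x_0)\Subset\Omega$, and let $F$ be any measurable set satisfying $F\setminus B_r(x_0)=E\setminus B_r(x_0)$. Since $E\setminus B_r(x_0)\subset\Omega$ and $F\cap B_r(x_0)\subset B_r(x_0)\subset\Omega$, the set $F$ is contained in $\Omega$, hence admissible in the Cheeger minimization. Therefore $P_s(F)\ge h_s(\Omega)\,|F|$, while $P_s(E)=h_s(\Omega)\,|E|$ by optimality of $E$. Combining these with the trivial bound $|E|-|F|\le |E\triangle F|\le |B_r(x_0)|=\omega_N\,r^N$ yields
$$
P_s(E)-P_s(F)\le h_s(\Omega)\,(|E|-|F|)\le h_s(\Omega)\,\omega_N\,r^N.
$$
Because $E,F\subset\Omega$, one has $J_\Omega(E)=P_s(E)/2$ and $J_\Omega(F)=P_s(F)/2$, which gives
$$
J_\Omega(E)\le J_\Omega(F)+C\,r^{s}\cdot r^{N-s},\qquad C=\frac{h_s(\Omega)\,\omega_N}{2}.
$$
This is exactly the $(J_\Omega,\omega,\delta)$-minimality condition with $\omega(r)=C\,r^s$.

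Next, I would localize and rescale: for the fixed $x_0\in\partial E\cap\Omega$, set $\rho=\mathrm{dist}(x_0,\partial\Omega)/2$ and consider $\widetilde E=(E-x_0)/\rho$, $\widetilde\Omega=(\Omega-x_0)/\rho$, so that $B_1\Subset\widetilde\Omega$. Using the scaling $P_s(tA)=t^{N-s}P_s(A)$ together with $|tA|=t^N|A|$, the almost-minimality estimate above transforms (with $\tilde r=r/\rho$) into
$$
J_{\widetilde\Omega}(\widetilde E)\le J_{\widetilde\Omega}(\widetilde F)+C\,\rho^{s}\,\tilde r^{\,s}\cdot\tilde r^{\,N-s},
$$
so $\widetilde E$ is $(J_{\widetilde\Omega},C'\,r^s,1)$-minimal in $B_1$ with $C'=C\,\rho^s$ and $\alpha=s$. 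Applying Theorem \ref{teo:reg}(2) gives $C^1$ regularity of $\partial\widetilde E$ in $B_{1/2}$ outside a singular set of Hausdorff dimension at most $N-2$, and rescaling back gives the corresponding statement for $\partial E$ near $x_0$. Since $x_0\in\partial E\cap\Omega$ is arbitrary, the global conclusion follows. For $N=2$, Theorem \ref{teo:reg}(3) removes the singular set entirely.

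The only delicate point is the bookkeeping of exponents in the scaling argument, matching the volume exponent $N$ in $|B_r|=\omega_N r^N$ against the perimeter exponent $N-s$ to produce the modulus $\omega(\tilde r)=C'\tilde r^{\,s}$ required by Theorem \ref{teo:reg}; every other ingredient is a direct consequence of the defining minimization for $h_s(\Omega)$ applied to localized competitors.
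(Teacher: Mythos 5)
Your proposal is correct and follows essentially the same route as the paper: first one checks that the Cheeger minimality, combined with the volume bound $\bigl||E|-|F|\bigr|\le\omega_N r^N$, yields the $(J_\Omega,C\,r^s,1)$-minimality (writing $r^N=r^s\cdot r^{N-s}$), and then one rescales around each interior boundary point so that Theorem \ref{teo:reg} applies in $B_1$. The only (harmless) cosmetic slip is attributing the "$C^1$ in $B_{1/2}$ under a flatness hypothesis" conclusion to part (2) of Theorem \ref{teo:reg}, when the dimension bound on the singular set is what part (2) actually provides; the constants in the rescaled modulus ($C\rho^s$ versus the paper's $C r_0^{-N}$) differ only by bookkeeping and are immaterial.
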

\begin{proof} We prove at first that $E$ is $(J_\Omega,C\,r^s,1)-$minimal in $\Omega$, with $C=C(N,\Omega)>0$. Since $E$ is an $s-$Cheeger set, it is a minimizer of
\begin{equation}
\label{additivo}
2\,\int_{E}\int_{E^c}\frac{1}{|x-y|^{N+s}} \,dx\, dy-h_s(\Omega)|E|,
\end{equation}
among all subsets of $\Omega$.
Hence, for any $x_0\in \partial E\cap \Omega$, $r<\mathrm{dist}(x_0,\partial \Omega)$ and $F$ such that $F\setminus B_r(x_0)=E\setminus B_r(x_0)$, we have $F\subset\Omega$ and thus
\begin{align*}
2\,\int_{E}\int_{E^c}\frac{1}{|x-y|^{N+s}} \,dx\, dy&\leq 2\,\int_{F}\int_{F^c}\frac{1}{|x-y|^{N+s}} \,dx\, dy+h_s(\Omega)(|E\cap B_r(x_0)|-|F\cap B_r(x_0)|)\\
&\leq 2\,\int_{F}\int_{F^c}\frac{1}{|x-y|^{N+s}} \,dx\, dy+C\,r^N.
\end{align*}
Now, we observe that $E\setminus \Omega=\emptyset=F\setminus\Omega$, thus the previous estimate is the same as
\begin{equation}
\label{almostminimal}
J_\Omega(E)\leq J_\Omega(F)+C\,r^N,
\end{equation}
which proves that $E$ is $(J_\Omega,C\,r^s,1)-$minimal in $\Omega$.
\vskip.2cm\noindent
We are now going to use Theorem \ref{teo:reg}. Let $x_0\in \partial E\cap \Omega$, then there exists a ball $B_{r_0}(x_0)\subset\Omega$. By defining
\[
\widetilde E=\frac{E-x_0}{r_0}\qquad \mbox{ and }\qquad\widetilde\Omega =\frac{\Omega-x_0}{r_0},
\] 
and using the scaling properties of $L$, we get
\[
J_{\widetilde\Omega}(\widetilde E)=\frac{J_\Omega(E)}{r_0^{N-s}}.
\] 
By using this and \eqref{almostminimal} we get
\[
J_{\widetilde\Omega}(\widetilde E)\le J_{\widetilde \Omega} (\widetilde F)+\frac{C}{r_0^N}\,r^N,
\]
for every $y\in\partial\widetilde E$, every $\widetilde F$ such that $\widetilde E\setminus B_r(y)=\widetilde F\setminus B_r(y)$ and every $r$ such that $r<\mathrm{dist}(y,\partial\widetilde\Omega)$. This gives that $\widetilde E$ is $(J_{\widetilde\Omega},\widetilde C\, r^s,1)-$minimal in $\widetilde \Omega$, where $\widetilde C=C\, r_0^{-N}$. Observe that $B_1\subset\widetilde\Omega$ by construction, thus $\widetilde E$ has the same almost minimality property in $B_1$ and Theorem \ref{teo:reg} applies. By scaling and translating back, we get the desired result for $E$. 
\end{proof}
By the same idea, we can obtain regularity of an $s-$Cheeger set at points touching $\partial\Omega$.
\begin{prop}[Boundary regularity]
\label{prop:boundary} Let $x_0\in \partial E\cap \partial \Omega$ and assume that $\partial \Omega$ is locally of class $C^{1,\alpha}$ around $x_0$. Then there exists $r_0>0$ such that $\partial E\cap B_{r_0}(x_0)$ is the graph of a $C^1$ function.
\end{prop}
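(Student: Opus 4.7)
The plan is to mirror the interior regularity proof of Proposition \ref{coro:C1}, modifying two ingredients to accommodate the presence of the boundary: local almost minimality still holds in the restricted class of competitors, and then one flattens $\partial\Omega$ via a $C^{1,\alpha}$ diffeomorphism and applies a boundary analogue of the flat-implies-$C^1$ statement from Theorem \ref{teo:reg}.

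First, by essentially the same computation as in the proof of Proposition \ref{coro:C1}, for every $r>0$ small and every competitor $F\subset\Omega$ with $F\setminus B_r(x_0)=E\setminus B_r(x_0)$, the minimality of $E$ in the $s$-Cheeger problem gives
\[
J_\Omega(E)\le J_\Omega(F)+h_s(\Omega)\,\big|(E\triangle F)\cap B_r(x_0)\big|\le J_\Omega(F)+C\,r^N,
\]
so $E$ is $(J_\Omega,C\,r^s,1)$-minimal near $x_0$ as well, the only difference being that the class of admissible $F$ is now restricted to subsets of $\Omega$. Next, the $C^{1,\alpha}$ regularity of $\partial\Omega$ at $x_0$ produces a $C^{1,\alpha}$ diffeomorphism $\Phi$ from a neighborhood $U$ of $x_0$ to $B_\rho$ with $\Phi(U\cap\Omega)=B_\rho\cap\{x_N>0\}$ and $\Phi(x_0)=0$. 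Setting $\widetilde E=\Phi(E\cap U)$ one obtains a set contained in the half-space $\{x_N>0\}$ with $0\in\partial\widetilde E$, and inheriting an almost-minimality property with respect to a suitably transformed kernel, with an error modulus of order $r^{\min(s,\alpha)}$ coming from the Jacobian estimates for $\Phi$ combined with the $C\,r^N$ bulk contribution.

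The problem is then reduced to regularity of a nonlocal almost minimal surface sitting above the obstacle $\{x_N=0\}$ and touching it at the origin. The plan is to invoke the boundary regularity developed in \cite{CG11}, whose proof parallels that of Theorem \ref{teo:reg}(1): the one-sided flatness $\widetilde E\subset\{x_N>0\}$ is automatic, while the matching flatness from above (namely, that $\partial\widetilde E$ lies in a thin strip $|x_N|\le \delta_0$ inside $B_{1}$ after rescaling) is forced by the combination of the obstacle contact at $0$ and a comparison argument. Concretely, if $\partial\widetilde E$ escaped the strip at some small scale, pushing $\widetilde E$ down to fill the region between $\partial\widetilde E$ and the obstacle would produce an admissible competitor of strictly smaller transformed $J$ by an amount comparable to $r^{N-s}$, contradicting the almost minimality for $r$ small. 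Once both-sided flatness is secured, Theorem \ref{teo:reg}(1) applies and yields $\partial\widetilde E\in C^1$ near $0$; pulling back through $\Phi^{-1}$, which is $C^{1,\alpha}$, gives that $\partial E\cap B_{r_0}(x_0)$ is the graph of a $C^1$ function.

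The main obstacle is the production of the two-sided flatness in the obstacle setting: the interior argument of \cite{CG11,SV12} relies crucially on the set being allowed to be perturbed freely in every direction, whereas here perturbations are one-sided. Handling this subtlety either requires invoking the tailored boundary result from \cite{CG11} or a direct barrier construction based on the strict scaling inequality $P_s(tE)/|tE|=t^{-s}\,h_s(\Omega)<h_s(\Omega)$ for $t>1$, which already appeared in the proof that $s$-Cheeger sets touch $\partial\Omega$.
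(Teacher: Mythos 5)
There is a genuine gap, and it lies exactly where you locate your ``main obstacle'': you keep the constraint $F\subset\Omega$ on competitors and are therefore forced into an obstacle-type problem for which you have no regularity theorem, only a heuristic comparison argument. The paper's proof avoids this entirely by a different first step: it shows that near $x_0$ the set $E$ is almost minimal for the functional $J_B$ in the ball $B=B_{r_0}(x_0)$ with respect to \emph{arbitrary} competitors $F$ (not only $F\subset\Omega$). Given any $F$ coinciding with $E$ outside $B_r(y)$, one tests the Cheeger minimality against $F\cap\Omega$, and the price of the truncation is
\[
L(F\cap\Omega,(F\cap\Omega)^c)-L(F,F^c)\le \int_{\Omega}\int_{B_r(y)\cap\Omega^c}\frac{dx\,dy}{|x-y|^{N+s}}\le C\,r^{\alpha}\,r^{N-s},
\]
where the last bound uses precisely the $C^{1,\alpha}$ regularity of $\partial\Omega$ (this is \cite[Example 2]{CG11}). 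Together with the $C\,r^{N}$ bulk term this gives $(J_B,C\,r^{\widetilde\alpha},1)$-minimality with $\widetilde\alpha=\min\{\alpha,s\}$, in the unconstrained sense required by the Definition preceding Theorem \ref{teo:reg}, so the interior machinery of \cite{CG11} applies at the boundary point without any obstacle formulation. The conclusion is then reached by blow-up: the limit cone $K$ is contained in a half-space (since $\partial\Omega$ blows up to a hyperplane), hence $K^c$ contains a tangent ball, \cite[Corollary 6.2]{CRS10} forces $K$ to be a half-space, and \cite[Theorem 7.4, part 3]{CG11} gives $C^1$ regularity. You never obtain the unconstrained almost minimality, so you cannot invoke Theorem \ref{teo:reg} or its proof.

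Two further points make your route unworkable as written. First, flattening $\partial\Omega$ by a $C^{1,\alpha}$ diffeomorphism destroys the specific kernel $|x-y|^{-N-s}$ for which the regularity theory of \cite{CRS10,CG11} is stated; scaling and translation (which the paper uses) preserve it, a general diffeomorphism does not, so ``inheriting an almost-minimality property with respect to a suitably transformed kernel'' is not something you can feed into the quoted theorems. Second, your flatness argument claims that filling the region between $\partial\widetilde E$ and the obstacle strictly decreases the nonlocal energy by an amount comparable to $r^{N-s}$; but for the $s$-perimeter filling a gap need not decrease the energy (see Remark \ref{oss:buchi}), and in any case a decrease of order $r^{N-s}$ is exactly the order of the almost-minimality error, so no contradiction follows. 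The honest assessment at the end of your proposal is correct: the two-sided flatness is not established, and neither of the two suggested remedies is carried out.
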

\begin{proof} Let $r_0>0$ and set for simplicity $B=B_{r_0}(x_0)$. Up to translating and scaling the sets as in the proof Proposition \ref{coro:C1}, we can suppose for simplicity that $x_0=0$ and $r_0=1$. As before,  we start by proving that $E$ is $(J_B,C\,r^{\widetilde\alpha},1)-$minimal in $B$, where we set
\[
\widetilde \alpha=\min\{\alpha,s\}.
\]
We take again $F$ to be a set coinciding with $E$ outside $B_r(y)$ for $y\in \partial E\cap B$ and $r<\mathrm{dist}(y,\partial B)$. Then $F\cap \Omega$ is admissible for the minimization of \eqref{additivo}, thus as before
\begin{align*}
\int_{E}\int_{E^c}\frac{1}{|x-y|^{N+s}} \,dx\, dy&\leq \int_{F\cap \Omega}\int_{(F\cap \Omega)^c}\frac{1}{|x-y|^{N+s}} \,dx\, dy+h_s(\Omega)\,(|E|-|F\cap \Omega|)\\&\leq \int_{F\cap \Omega}\int_{(F\cap \Omega)^c}\frac{1}{|x-y|^{N+s}} \,dx\, dy+C\,r^N, 
\end{align*}
where in the second inequality we used that $E$ and $F$ only differ in $B_r(y)$. For the same reason we have $F\setminus B=E\setminus B$ and $F^c\setminus B=E^c\setminus B$, so that
\[
\begin{split}
J_B(E)&=L(E\cap B,E^c)+L(E\setminus B,E^c\cap B)\\
&=L(E\cap B,E^c)+L(E\setminus B,E^c)+L(E\setminus B,E^c\cap B)-L(E\setminus B,E^c)\\
&=L(E,E^c)-L(E\setminus B,E^c\setminus B)\\
&\le L(F\cap\Omega,(F\cap \Omega)^c)-L(F\setminus B,F^c\setminus B)+C\,r^N\\
&=J_B(F)+\Big[L(F\cap\Omega,(F\cap\Omega)^c)-L(F\cap B,F^c)-L(F\setminus B,F^c)\Big]\\
&+C\, r^N\\
\end{split}
\]
which gives
\begin{equation}
\label{erik}
J_B(E)\le J_B(F)+\Big[L(F\cap\Omega,(F\cap\Omega)^c)-L(F,F^c)\Big]+C\, r^N.
\end{equation}
We have to estimate the second term in the right-hand side of \eqref{erik}. For this, we note that
\[
F^c\cup (F\cap \Omega^c)=F^c\cup \Omega^c=(F\cap \Omega)^c,
\]
then for every positive measurable function $g$ we have
\[
\begin{split}
\int_{F\cap \Omega}\int_{(F\cap \Omega)^c}\!\! g(x,y)\, dxdy &=\int_{F\cap\Omega}\int_{F^c} g(x,y)\, dxdy +\int_{F\cap \Omega}\int_{F\cap\Omega^c} g(x,y)\, dxdy \\
&=\int_F\int_{F^c}\!g(x,y)\, dxdy-\int_{F\cap \Omega^c}\int_{F^c}g(x,y)\, dxdy+\int_{F\cap \Omega}\int_{F\cap \Omega^c}\! g(x,y)\, dxdy\\
&\le \int_F\int_{F^c}g(x,y)\, dxdy+\int_{\Omega}\int_{B_r(y)\cap \Omega^c}\! g(x,y)\, dxdy,
\end{split}
\]
thanks to the fact that $F\cap \Omega^c\subset B_r(y)\cap \Omega^c$, since $F\subset B_r(y)\cup\Omega$ by construction. Thus we can infer
\begin{align*}
\int_{F\cap \Omega}\int_{(F\cap \Omega)^c}\frac{1}{|x-y|^{N+s}} \,dx\, dy&\leq \int_{F}\int_{F^c}\frac{1}{|x-y|^{N+s}} \,dx\, dy+\int_{\Omega}\int_{B_r(y)\cap\Omega^c}\frac{1}{|x-y|^{N+s}} \,dx\, dy\\&\leq \int_{F}\int_{F^c}\frac{1}{|x-y|^{N+s}} \,dx\, dy+C\,r^\alpha\, r^{N-s}, 
\end{align*}
where the second inequality follows from \cite[Example 2]{CG11} since we have assumed that $\Omega$ has a $C^{1,\alpha}$ boundary. By inserting the previous estimate in \eqref{erik}, we finally get 
$$
J_B(E)\leq J_B(F)+C\,r^N+C\,r^\alpha\, r^{N-s}, 
$$
which proves that $E$ is $(J_B,C\,r^{\widetilde\alpha},1)-$minimal in $B$, possibly with a different constant $C$.
\vskip.2cm\noindent
The $C^{1}$ regularity now follows from part 3 in \cite[Theorem 7.4]{CG11}. Indeed, if we perform a blow-up of $E$, we will as usual obtain a nonlocal minimal cone $K$. Moreover, the complement $K^c$ is minimal as well and $K^c$ contains a tangential ball, due to the fact that $\Omega$ is assumed to be $C^{1,\alpha}$, which means that $\partial\Omega$ becomes a half-space after a blow-up.  By \cite[Corollary 6.2]{CRS10} we get that $\partial K$ is a $C^1$ surface, and since $K$ is a cone, this means that $K$ is a half-space. From \cite[Theorem 7.4, part 3]{CG11}, we can now conclude that $E$ is $C^1$.
\end{proof}
Finally we prove that at any point of $\partial E\cap \Omega$ having a tangent ball from both sides, an $s-$Cheeger set $E$ has constant nonlocal mean curvature equal to $-h_s(\Omega)$. 
At this aim, we first need a technical result, whose proof closely follows that of \cite[Theorem 5.1]{CRS10}.
\begin{lm} 
\label{lm:oneside}
Let $\Omega\subset\mathbb{R}^N$ be an open bounded set and $E\subset\mathbb{R}^N$ a set satisfying
\begin{equation}
\label{eq:super}
L(A,E)-L(A,(E\cup A)^c)\leq C_0\, |A|,
\end{equation}
for every $A\subset \Omega\setminus E$ and for some constant $C_0$. Let us suppose that there exists a ball $B_r(y_0)\subset E$ which is tangent at $x_0\in\partial E\cap\Omega$. Then we have 
\begin{equation}
\label{oneside}
\limsup_{\delta\to 0^+}\int_{\mathbb{R}^N\setminus B_\delta(x_0)}\frac{1_E(x)-1_{E^c}(x)}{|x-x_0|^{N+s}}\, d x\le C_0.
\end{equation} 
\end{lm}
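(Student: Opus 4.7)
The plan is to mimic the viscosity-type touching-ball argument of \cite[Theorem 5.1]{CRS10}. Let $\nu=(x_0-y_0)/r$ be the outer unit normal to the tangent ball $B_r(y_0)$ at $x_0$, and for small $\rho>0$ put $B^{\rho}:=B_\rho(x_0+\rho\nu)$. Since $B^{\rho}$ is externally tangent to $B_r(y_0)$ at $x_0$, we have $B^{\rho}\cap B_r(y_0)=\emptyset$, and for $\rho<\mathrm{dist}(x_0,\partial\Omega)/2$ also $B^{\rho}\subset\Omega$. I would take the admissible test set $A_\rho:=B^{\rho}\cap(\Omega\setminus E)\subset\Omega\setminus E$.

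Plugging $A=A_\rho$ into \eqref{eq:super} and using $A_\rho\subset E^c$, so that $(E\cup A_\rho)^c=E^c\setminus A_\rho$, the hypothesis becomes
\[
\int_{A_\rho}\int_{\mathbb{R}^N\setminus A_\rho}\frac{1_E(y)-1_{E^c}(y)}{|x-y|^{N+s}}\,dy\,dx\le C_0\,|A_\rho|.
\]
Fix $\delta>2\rho$ and split the $y$-integration into the \emph{far piece} $\mathbb{R}^N\setminus B_\delta(x_0)$ and the \emph{near piece} $B_\delta(x_0)\setminus A_\rho$. For the far piece, since $|x-y|/|x_0-y|\to 1$ uniformly for $|x_0-y|\ge\delta$ as $\rho\to 0^+$, dominated convergence gives
\[
\lim_{\rho\to 0^+}\frac{1}{|A_\rho|}\int_{A_\rho}\int_{\mathbb{R}^N\setminus B_\delta(x_0)}\frac{1_E(y)-1_{E^c}(y)}{|x-y|^{N+s}}\,dy\,dx=\int_{\mathbb{R}^N\setminus B_\delta(x_0)}\frac{1_E(y)-1_{E^c}(y)}{|x_0-y|^{N+s}}\,dy,
\]
after reducing, if necessary, to a subsequence along which $|A_\rho|/|B^{\rho}|$ stays bounded below.

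The main obstacle is controlling the near piece from below by something that vanishes as $\rho\to 0^+$. The key geometric input is that the reflection $R(B^{\rho})=B_\rho(x_0-\rho\nu)$ of $B^{\rho}$ across the tangent hyperplane to $B_r(y_0)$ at $x_0$ sits inside $B_r(y_0)\subset E$, while the tangent ball forces $E^c\cap B_\delta(x_0)\subset\mathbb{R}^N\setminus B_r(y_0)$. Using the isometry $R$ as a change of variables one pairs each $y\in E^c\cap B_\delta(x_0)$ (which contributes $-|x-y|^{-N-s}$) with the corresponding $R(y)\in B_r(y_0)\cap B_\delta(x_0)\subset E$ (which contributes $+|x-R(y)|^{-N-s}$); for $x\in A_\rho\subset B^{\rho}$, a Taylor expansion in $\rho$ of the kernel shows that these paired contributions cancel up to a term that is $O(\rho^{1-s}) + o_\delta(1)$ uniformly. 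Combined with the far-piece estimate, this yields
\[
\int_{\mathbb{R}^N\setminus B_\delta(x_0)}\frac{1_E(y)-1_{E^c}(y)}{|x_0-y|^{N+s}}\,dy \le C_0 + o_\rho(1) + o_\delta(1),
\]
and letting $\rho\to 0^+$ and then $\delta\to 0^+$ gives \eqref{oneside}.

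The principal technical difficulty lies in the cancellation step: without any a priori regularity of $E$ beyond the one-sided tangent ball, the set $A_\rho$ may have measure substantially smaller than $|B^{\rho}|$ (if $E$ bulges beyond $B_r(y_0)$), and one must simultaneously (i) show that $|A_\rho|/|B^{\rho}|$ cannot degenerate to zero along a suitable sequence $\rho_k\to 0^+$—here the hypothesis \eqref{eq:super} feeds back in, ruling out excessive outward bulging of $E$—and (ii) realize the reflection pairing so that only the outward-bulging part of $E\setminus B_r(y_0)$ produces an error term, which is then absorbed using the integrability of $|x_0-y|^{-N-s}$ on $\{|x_0-y|\ge\delta\}$ and the $s$-growth of the singular kernel near $x_0$.
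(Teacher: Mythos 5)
Your overall strategy (test \eqref{eq:super} with a set in $\Omega\setminus E$ concentrated near $x_0$, then cancel the singular near-field contributions by a reflection exploiting the tangent ball) is the right one, but the two choices you make differ from the paper's in a way that breaks the argument. The paper follows \cite[Theorem 5.1]{CRS10}: after normalizing $B_r(y_0)=B_2(-2e_N)$, it takes the test set $A=A^-\cup A^+$ with $A^-=B_{1+\varepsilon}(-e_N)\setminus E$ the whole cap of a slightly \emph{enlarged} tangent ball sticking out of $E$, $A^+=\mathcal{T}(A^-)\setminus E$, where $\mathcal{T}$ is the \emph{radial inversion} in the sphere $\partial B_{1+\varepsilon}(-e_N)$; the leftover piece of $E^c\cap B_\delta$ is then $(E\cup A)^c\cap B_\delta$, whose image under $\mathcal{T}$ lands inside $E\cap B_\delta$, and the mismatch $I_2=L(A,\mathcal{T}(F))-L(A,F)$ is controlled by \cite[Lemma 5.2]{CRS10} together with the positive density property of \cite[Proposition 4.1]{CG11}. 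Your hyperplane reflection $R$ cannot play the role of $\mathcal{T}$: $R$ maps $\mathbb{R}^N\setminus B_r(y_0)$ onto $\mathbb{R}^N\setminus B_r(x_0+r\nu)$, not into $B_r(y_0)$, so a point $y\in E^c\cap B_\delta(x_0)$ is sent into $E$ only when $y$ lies in the \emph{exterior} tangent ball $B_r(x_0+r\nu)$; for $y$ in the equatorial region outside both tangent balls, $R(y)$ stays in that ($R$-symmetric) region and nothing forces $R(y)\in E$, so the pairing is simply unavailable there.

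Even where the pairing is available the claimed error $O(\rho^{1-s})+o_\delta(1)$ is not correct. Take $x\in A_\rho\subset B^{\rho}$ at height $h_x\sim\rho$ above the tangent hyperplane and $y\in E^c\cap B_r(x_0+r\nu)$ just outside $\partial B^{\rho}$ at height $h_y\sim\rho$ (such $y$ exist in abundance, e.g.\ when $E$ coincides with $B_r(y_0)$ near $x_0$, in which case the whole crescent $B_r(x_0+r\nu)\setminus B^{\rho}$ lies in $E^c$). Then $|x-R(y)|^2=|x-y|^2+4h_xh_y$, so $|x-R(y)|\gtrsim\rho$ while $|x-y|$ can be arbitrarily small: the paired terms do not cancel at all, and summing over this crescent gives $\int_{A_\rho}\int\bigl(|x-y|^{-N-s}-|x-R(y)|^{-N-s}\bigr)\,dy\,dx\sim\rho^{N-s}$, which is of order $\rho^{-s}|A_\rho|$, not $o(|A_\rho|)$. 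This is precisely why \cite{CRS10} puts \emph{all} of $E^c\cap B_{1+\varepsilon}(-e_N)$ (together with its inversion) into the test set $A$, so that no mass of $E^c$ is left adjacent to $A$ inside the enlarged ball, and why the inversion is taken in a sphere passing $\varepsilon$-outside the tangent ball rather than in the tangent hyperplane. Finally, your step (i) --- ruling out $|A_\rho|/|B^{\rho}|\to 0$ using \eqref{eq:super} alone --- is also unjustified: the one-sided supersolution inequality \eqref{eq:super} does not yield a lower density bound for $E^c$ at $x_0$ (the density estimates of \cite{CRS10,CG11} use two-sided (almost) minimality), and the paper's proof indeed has to invoke that property separately, via the footnote referring to \cite[Proposition 4.1]{CG11}, to estimate $I_2$. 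As written, your proof identifies the hard step but does not supply it, and the specific reflection you propose would not close it.
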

\begin{proof}
We briefly recall the construction of the proof in \cite[Theorem 5.1]{CRS10} for the reader's convenience. Let us set $e_N=(0,\dots,0,1)$, without loss of generality we can assume that $x_0=0$ and that $B_r(y_0)=B_2(-2\,e_N)$, since we can always reduce to this case by rescaling and translating. Take $0<\delta\ll 1$ such that $B_\delta(0)\subset\Omega$ and $0<\varepsilon\ll\delta$ such that\footnote{This is possible by taking for example $\varepsilon\le \varepsilon_0(\delta)$, where
\[
\varepsilon_0(\delta):=\sqrt{1+\frac{\delta^2}{2}}-1\simeq \frac{\delta^2}{4}.
\]} 
$B_{1+\varepsilon}(-e_N)\setminus E\subset B_\delta(0)$. We denote by $\mathcal{T}$ the radial reflection in the sphere $\partial B_{1+\varepsilon}(-e_N)$ (see Figure 1), then
\begin{figure}
\includegraphics[scale=.3]{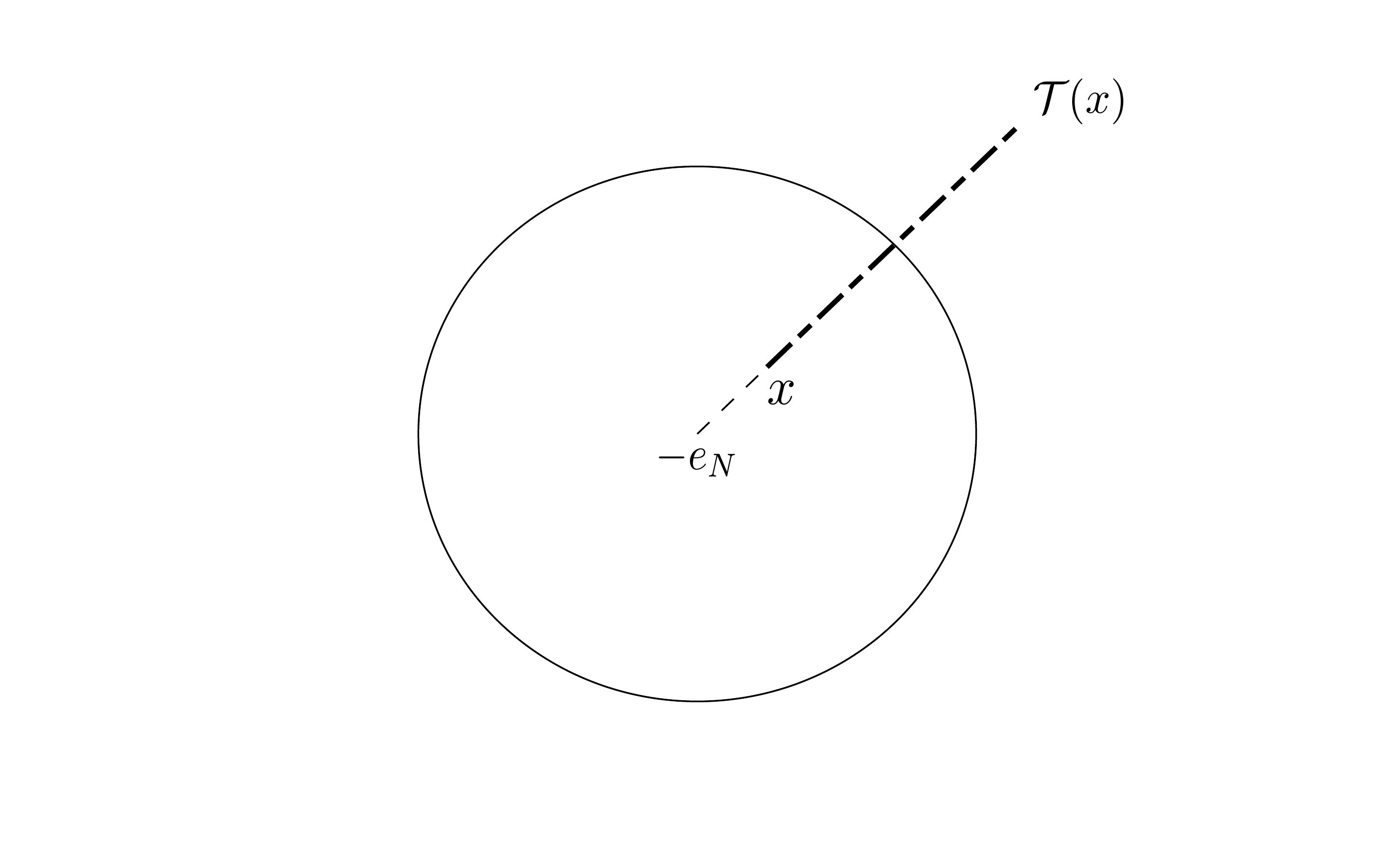}
\caption{The vector $\mathcal{T}(x)+e_N$ is parallel to $x+e_N$ and $\mathcal{T}(x)$ and $x$ have the same distance from the boundary of the ball.}
\end{figure}
we define the sets
$$
A^-=B_{1+\varepsilon}(-e_N)\setminus E,\qquad A^+=\mathcal{T}(A^-)\setminus E\qquad \mbox{ and }\qquad A=A^+\cup A^-,
$$
see Figure 2.
\begin{figure}
\includegraphics[scale=.35]{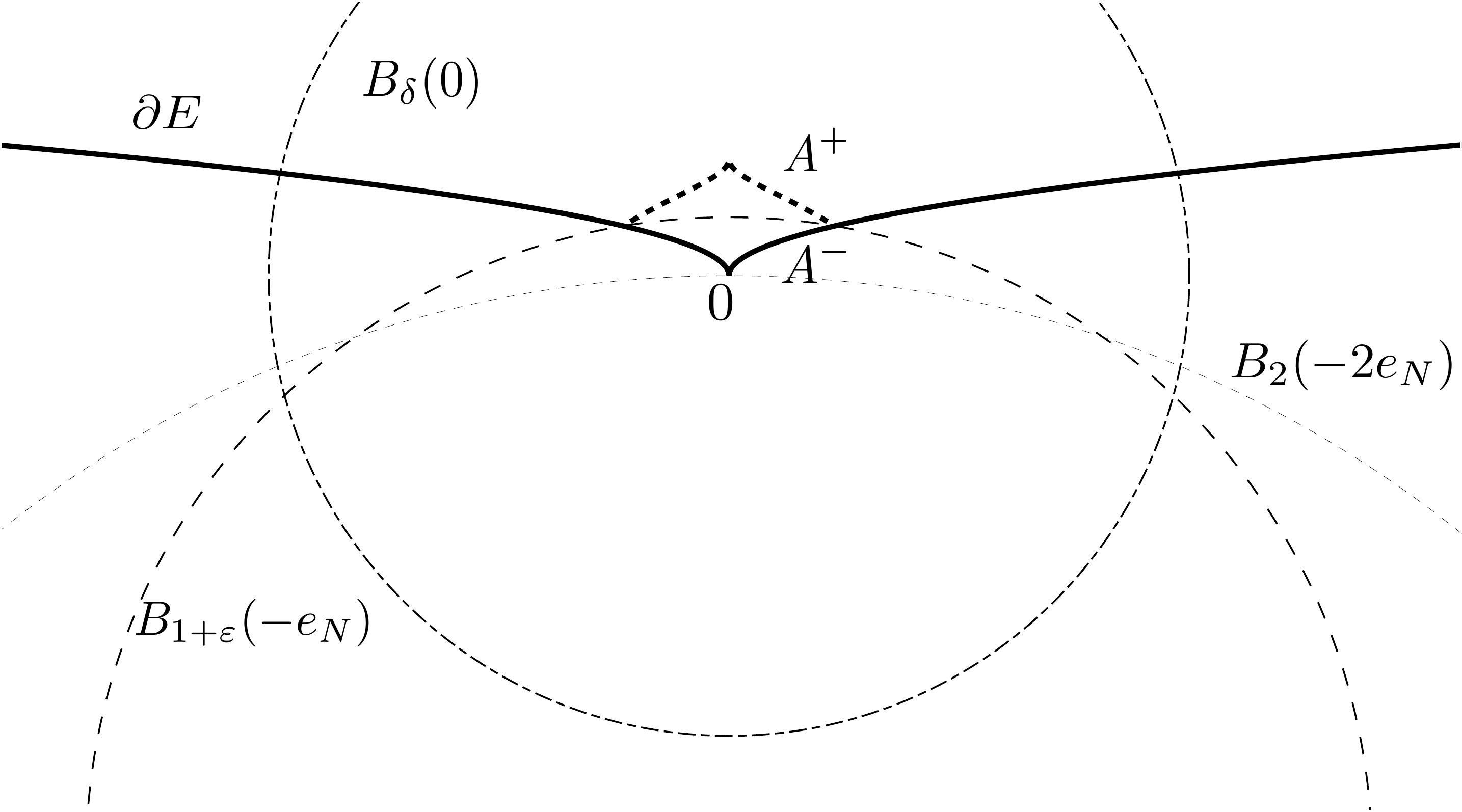}
\caption{The construction of Lemma \ref{lm:oneside}. In this particular case we have $\mathcal{T}(A^-)\cap E=\emptyset$, so that we simply have $A^+=\mathcal{T}(A^-)$.}
\end{figure}
Observe that by construction $A\subset B_\delta(0)\subset\Omega$.
Finally, we define 
$$
F:=\mathcal{T}(B_\delta(0)\cap (E\cup A)^c)\subset E\cap B_\delta(0).
$$
Since by construction $A\subset \Omega\setminus E$, by \eqref{eq:super} we get
\begin{equation}
\label{eq:supsoluse}
L(A,E)-L(A,(E\cup A)^c)\leq C_0\,|A|.
\end{equation}
We remark that we have the following set relations
\begin{align*}
E&=(E\setminus B_\delta(0)) \cup [(E\cap B_\delta(0)) \setminus F]\cup [(E\cap B_\delta(0))\cap F]\\
&=(E\setminus B_\delta(0)) \cup [(E\cap B_\delta(0))\setminus F]\cup F,
\end{align*} 
where we used that $F\subset E\cap B_\delta(0)$.
Also, since $A\subset B_\delta(0)$ 
there holds
\begin{align*}
(E\cup A)^c=E^c\cap A^c &= [(E^c\cap A^c)\cap B_\delta(0)] \cup[(E^c\cap A^c) \setminus B_\delta(0)] \\
&=\mathcal{T}(F) \cup[E^c\setminus B_\delta(0)].
\end{align*}
By putting these two relations together we can realize that
\[
\begin{split}
L&(A,E)-L(A,(E\cup A)^c)\\
&=\Big[L(A,E\setminus B_\delta(0))-L(A,E^c\setminus B_\delta(0))\Big]-\Big[L(A,\mathcal{T}(F))-L(A,F)\Big]+L(A,(E\cap B_\delta(0))\setminus F)\\
&\geq \Big[L(A,E\setminus B_\delta(0))-L(A,E^c\setminus B_\delta(0))\Big]-\Big[L(A,\mathcal{T}(F))-L(A,F)\Big]=:I_1-I_2.
\end{split}
\]
Due to \eqref{eq:supsoluse} we can thus conclude that
\begin{equation}\label{eq:I1I2}
I_1-I_2\leq C_0\,|A|.
\end{equation}
In the proof of \cite[Theorem 5.1]{CRS10} it is proved that (see formula (5.1))
$$
\left|I_1-|A|\,\int_{\mathbb{R}^N\setminus B_\delta(0)}\frac{1_E(y)-1_{E^c}(y)}{|y|^{N+s}}\, dy \right|\leq C\,\varepsilon^\frac12\,\delta^{-1-s}\,|A|,
$$
and\footnote{We should note that in order to prove the estimate on $I_2$ the authors strongly use the {\it positive density property} for nonlocal minimal surfaces (see \cite[Section 4]{CRS10}), which also holds true for nonlocal almost minimal boundaries, see \cite[Proposition 4.1]{CG11}.} (see formula (5.3) and Lemma 5.2 in \cite{CRS10})
$$
I_2\leq C\,\delta^{1-s}\,|A|+C\,\varepsilon^\eta\,|A^-|\le C\, |A|\, (\delta^{1-s}+\varepsilon^\eta),
$$
for some sequence $\varepsilon\to 0$ and for some $\eta\in (0,1-s)$.  Plugging the two above estimates into \eqref{eq:I1I2} and dividing by $|A|$ yields
$$
\int_{\mathbb{R}^N\setminus B_\delta(0)}\frac{1_E(y)-1_{E^c}(y)}{|y|^{n+s}}\, dy\leq C_0+C\,\varepsilon^\frac12\,\delta^{-1-s}+C\,\delta^{1-s}+C\,\varepsilon^\eta.
$$
We then pass to the limit as $\varepsilon$ goes to $0$ and then to the limit as $\delta$ goes to $0$, this implies
$$
\limsup_{\delta\to 0^+}\int_{\mathbb{R}^N\setminus B_\delta(0)}\frac{1_E(y)-1_{E^c}(y)}{|y|^{N+s}}\, dy\leq C_0,
$$
which concludes the proof.
\end{proof}
\begin{teo}
\label{teo:CMC}
Let $E$ be an $s-$Cheeger set of $\Omega$ such that $E$ admits a tangent ball from both sides at $x_0\in \partial E\cap \Omega$. Then 
\[
\lim_{\delta\to 0^+}\int_{\mathbb{R}^N\setminus B_\delta(x_0)} \frac{1_E(x)-1_{E^c}(x)}{|x-x_0|^{N+s}}\, dx=-h_s(\Omega).
\] 
\end{teo}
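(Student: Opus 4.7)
The strategy is to apply Lemma \ref{lm:oneside} in two dual ways, exploiting the two tangent balls at $x_0$: once to $E$ itself to produce the upper bound on the $\limsup$, and once to the complementary set $E^c$ to produce the matching lower bound on the $\liminf$.

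For the upper bound, I would test the Cheeger minimality against the competitor $F=E\cup A$ for an arbitrary measurable set $A\subset\Omega\setminus E$. Since $F\subset\Omega$, the inequality $P_s(F)/|F|\ge h_s(\Omega)$ rearranges to $P_s(E\cup A)-P_s(E)\ge h_s(\Omega)\,|A|$. The elementary set-theoretic decomposition
\[
P_s(E\cup A)-P_s(E)=2\bigl[L(A,(E\cup A)^c)-L(A,E)\bigr],
\]
already used in the proof of Proposition \ref{coro:C1}, then yields
\[
L(A,E)-L(A,(E\cup A)^c)\le C_0\,|A|,\qquad\text{for every } A\subset\Omega\setminus E,
\]
with $C_0$ a negative multiple of $h_s(\Omega)$. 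This is precisely the hypothesis of Lemma \ref{lm:oneside} for the set $E$; since $E$ has a tangent ball from inside at $x_0$, the Lemma provides the desired upper bound on $\limsup_{\delta\to 0^+}\int_{\mathbb{R}^N\setminus B_\delta(x_0)}(1_E-1_{E^c})/|x-x_0|^{N+s}\,dx$.

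For the lower bound, I would argue dually: test Cheeger minimality against the competitor $F=E\setminus A$ for $A\subset\Omega\cap E$, which gives $P_s(E)-P_s(E\setminus A)\le h_s(\Omega)\,|A|$, and combine this with the parallel decomposition
\[
P_s(E)-P_s(E\setminus A)=2\bigl[L(A,E^c)-L(A,E\setminus A)\bigr]
\]
to obtain
\[
L(A,E^c)-L(A,(E^c\cup A)^c)\le C_0'\,|A|,
\]
where $C_0'$ is a positive multiple of $h_s(\Omega)$ (here I have used that $A\subset E$ implies $(E^c\cup A)^c=E\setminus A$). Since $A\subset\Omega\cap E=\Omega\setminus E^c$, this is the hypothesis of Lemma \ref{lm:oneside} applied to the set $E^c$ with the same ambient $\Omega$. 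The outside tangent ball of $E$ at $x_0$ is an inside tangent ball of $E^c$, so Lemma \ref{lm:oneside} yields an upper bound on $\limsup_{\delta\to 0^+}\int (1_{E^c}-1_E)/|x-x_0|^{N+s}\,dx$, i.e., a lower bound on the $\liminf$ of the original integral. The two matching bounds combine to give the existence of the limit and its identification with $-h_s(\Omega)$.

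The main subtlety is the dual application of Lemma \ref{lm:oneside} to $E^c$: it is crucial that the Lemma is stated for an arbitrary set $E\subset\mathbb{R}^N$ (not required to be contained in the ambient $\Omega$), with only the test sets $A$ constrained to lie in $\Omega\setminus E$. This makes the substitution $E\leadsto E^c$ with the same ambient $\Omega$ legitimate, and the tangent-balls-on-both-sides hypothesis is exactly what is needed to run both halves of the argument simultaneously and obtain a two-sided estimate rather than a one-sided one.
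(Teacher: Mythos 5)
Your argument is correct and is essentially identical to the paper's proof: both halves rest on testing the minimality of $E$ for $P_s(\cdot)-h_s(\Omega)\,|\cdot|$ against the competitors $E\cup A$ and $E\setminus A$, recognizing the resulting inequalities as hypothesis \eqref{eq:super} for $E$ and for $E^c$ respectively, and invoking Lemma \ref{lm:oneside} twice via the two tangent balls. The only point to tighten is that you should track the constants $C_0$, $C_0'$ explicitly rather than leaving them as unspecified ``multiples of $h_s(\Omega)$'', since the two one-sided bounds must match exactly in order to identify the limit as $-h_s(\Omega)$.
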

\begin{proof}
We first observe that since $E$ is a minimizer of \eqref{additivo}, we get that $E$ satisfies \eqref{eq:super} with $C_0=-h_s(\Omega)$. For this, it sufficient to
test the minimality of $E$ against a set of the form $A\cup E$, for every $A\subset \Omega\setminus E$.  Therefore \eqref{oneside} implies
\[
\limsup_{\delta\to 0^+}\int_{\mathbb{R}^N\setminus B_\delta(x_0)}\frac{1_E(x)-1_{E^c}(x)}{|x-x_0|^{N+s}}\, d x\le -h_s(\Omega).
\] 
On the other hand, we also get
\begin{equation}\label{eq:sub}
L(A,E\setminus A)-L(A,E^c)\geq -h_s(\Omega)\, |A|,
\end{equation}
where this time we tested the minimality of $E$ against $E\setminus A$, with $A$ being any subset of $E$. It is immediate to see that \eqref{eq:sub} means that $E^c$ as well satisfies \eqref{eq:super}, this time with $C_0=h_s(\Omega)$ and by hypothesis $E^c$ contains a tangent ball at $x_0$. Then again we can apply Lemma \ref{lm:oneside} and thus
\[
-h_s(\Omega)\le -\limsup_{\delta\to 0^+} \int_{\mathbb{R}^N\setminus B_\delta(x_0)} \frac{1_{E^c}(x)-1_{(E^c)^c}(x)}{|x-x_0|^{N+s}}\, dx=\liminf_{\delta\to 0^+}\int_{\mathbb{R}^N\setminus B_\delta(x_0)} \frac{1_{E}(x)-1_{E^c}(x)}{|x-x_0|^{N+s}}\, dx,
\]
which gives the desired result.
\end{proof}

\section{The first eigenvalues and the Cheeger constant} 
\label{sec:lim}

In this section we show that for a Lipschitz set $\Omega$, the nonlocal Cheeger constant $h_s(\Omega)$ is the limit of the first eigenvalues $\lambda^s_{1,p}(\Omega)$, as in the case of the $p-$Laplacian. The main result is the following.
\begin{teo}[Convergence of the minima]
\label{teo:lim}
Let $\Omega\subset\mathbb{R}^N$ be an open and bounded Lipschitz set. For every $0<s<1$ we have
\begin{equation}
\label{limite}
\lim_{p\to 1^+} \lambda^s_{1,p}(\Omega)=h_s(\Omega).
\end{equation}
\end{teo}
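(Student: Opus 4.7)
The plan is to split the proof into the two standard directions, following the philosophy of \cite{KF} but replacing gradient-based tools with nonlocal analogues developed in Sections \ref{sec:sobolev}--\ref{sec:constant}.

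For the upper bound $\limsup_{p\to 1^+}\lambda^s_{1,p}(\Omega)\le h_s(\Omega)$, I would fix a nonnegative $\varphi\in C^\infty_0(\Omega)$ and use it as test function:
$$
\lambda^s_{1,p}(\Omega)\le\frac{[\varphi]^p_{W^{s,p}(\mathbb{R}^N)}}{\|\varphi\|^p_{L^p(\Omega)}}.
$$
Because $\varphi$ is bounded, Lipschitz and compactly supported, the integrand $|\varphi(x)-\varphi(y)|^p/|x-y|^{N+sp}$ admits a $p$-independent integrable majorant for $p$ in a neighborhood of $1$ (split into $|x-y|\le 1$, where the Lipschitz bound and $s<1$ ensure integrability, and $|x-y|>1$, where $\|\varphi\|_\infty<\infty$ and $sp$ stays bounded away from $0$). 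Dominated convergence then gives $[\varphi]^p_{W^{s,p}}\to[\varphi]_{W^{s,1}}$ and $\|\varphi\|^p_{L^p}\to\|\varphi\|_{L^1}$. Passing to $\limsup$ and infimizing over $\varphi$ yields $\limsup_{p\to 1^+}\lambda^s_{1,p}(\Omega)\le\lambda^s_{1,1}(\Omega)=h_s(\Omega)$, where the last equality is Theorem \ref{teo:primo1} (and requires $\Omega$ Lipschitz).

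For the lower bound, let $u_p\ge 0$ minimize $\lambda^s_{1,p}(\Omega)$ with $\|u_p\|_{L^p(\Omega)}=1$, so $[u_p]^p_{W^{s,p}(\mathbb{R}^N)}=\lambda^s_{1,p}(\Omega)$ is uniformly bounded by the upper bound just proved. To get compactness I fix any $\sigma\in(0,s)$ and apply Lemma \ref{lm:sommabilità} with $q=1$, $\varepsilon=s-\sigma$, giving $[u_p]_{W^{\sigma,1}(\mathbb{R}^N)}\le C$ uniformly. Combined with $\|u_p\|_{L^1(\Omega)}\le|\Omega|^{(p-1)/p}\le C'$, Theorem \ref{teo:compact} produces a subsequence $u_{p_n}\to u$ in $L^1(\Omega)$ (and a.e., after extraction), with $u\in\mathcal{W}^{\sigma,1}_0(\Omega)$. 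Fatou's lemma applied to the integrand $|u_{p_n}(x)-u_{p_n}(y)|^{p_n}/|x-y|^{N+sp_n}$ (which converges a.e.\ to $|u(x)-u(y)|/|x-y|^{N+s}$) gives
$$
[u]_{W^{s,1}(\mathbb{R}^N)}\le\liminf_{n\to\infty}[u_{p_n}]^{p_n}_{W^{s,p_n}(\mathbb{R}^N)}=\liminf_{n\to\infty}\lambda^s_{1,p_n}(\Omega),
$$
so in particular $u\in\mathcal{W}^{s,1}_0(\Omega)$. To conclude it is enough to show $\|u\|_{L^1(\Omega)}=1$: by Theorem \ref{lm:stimetta} there is a constant $M$ with $\|u_p\|_{L^\infty}\le M$ uniformly for $p$ close to $1$ (here $sp<N$), whence
$$
1=\|u_p\|^p_{L^p(\Omega)}\le M^{p-1}\,\|u_p\|_{L^1(\Omega)}\quad\Longrightarrow\quad\|u_p\|_{L^1(\Omega)}\ge M^{-(p-1)}\longrightarrow 1,
$$
while the reverse estimate $\|u_p\|_{L^1(\Omega)}\le|\Omega|^{(p-1)/p}\to 1$ is Hölder. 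Applying Theorem \ref{teo:primo1} to $u$ then delivers $\liminf_p\lambda^s_{1,p}(\Omega)\ge[u]_{W^{s,1}(\mathbb{R}^N)}\ge h_s(\Omega)\,\|u\|_{L^1(\Omega)}=h_s(\Omega)$.

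The only non-routine point is checking that the various constants invoked are uniform in $p$ as $p\to 1^+$: the factor in Lemma \ref{lm:sommabilità} is $|\Omega|^{1-1/p}/((s-\sigma)\sigma)$, bounded near $p=1$; the $L^\infty$ constant $\widetilde C_{N,p,s}$ of Theorem \ref{lm:stimetta} is controlled by the Sobolev constant of $W^{s,p}_0(\mathbb{R}^N)$ and by powers of $N/(N-sp)$, both of which stay bounded since $sp$ stays in a compact subinterval of $(0,N)$. Once these uniformities are in hand, the rest is a routine assembly of the coarea-type characterization of $h_s(\Omega)$ from Theorem \ref{teo:primo1}, the compactness of Theorem \ref{teo:compact}, and the interpolation Lemma \ref{lm:sommabilità}.
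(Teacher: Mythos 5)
Your proof is correct and follows the same overall architecture as the paper's: the limsup inequality via smooth test functions together with the identity $\lambda^s_{1,1}(\Omega)=h_s(\Omega)$ of Theorem \ref{teo:primo1} (which is where the Lipschitz hypothesis enters), and the liminf inequality via the uniform $W^{\sigma,1}$ bound from Lemma \ref{lm:sommabilità}, the compactness of Theorem \ref{teo:compact}, Fatou's lemma and \eqref{pensaci!}. The one genuine divergence is how you show that the limit function $u$ has unit $L^1$ norm. The paper exploits Corollary \ref{coro:compact}: since the minimizers are bounded in $\mathcal{W}^{s/2,1}_0(\Omega)$, a subsequence converges in $L^q(\Omega)$ for a fixed $q\in\left(1,\tfrac{2N}{2N-s}\right)$, and then $\|u_{p_j}\|_{L^{p_j}}\to\|u\|_{L^1}$ follows from $\|u_{p_j}-u\|_{L^{p_j}}\le C\,\|u_{p_j}-u\|_{L^q}$ once $p_j\le q$. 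You instead invoke the global $L^\infty$ estimate of Theorem \ref{lm:stimetta} to write $1=\|u_p\|^p_{L^p}\le M^{p-1}\|u_p\|_{L^1}$. This works, but it shifts the burden onto the uniformity of $\widetilde C_{N,p,s}$ as $p\to 1^+$, a point the paper only settles in the subsequent theorem on convergence of the minimizers; note that this uniformity is not simply a matter of $s\,p$ staying in a compact subinterval of $(0,N)$, since the Sobolev constant of $W^{s,p}_0(\mathbb{R}^N)$ depends on $s$ and $p$ separately — the paper controls its limsup as $p\to 1^+$ by the sharp $p=1$ constant via \cite[Corollary 4.2]{FS}. The paper's route through $L^q$ compactness for a $q$ slightly above $1$ avoids the Moser-iteration $L^\infty$ bound altogether and is in that sense lighter; your route has the merit of making the normalization argument completely elementary once the uniform $L^\infty$ bound is granted. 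Both are valid.
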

\begin{proof} 
We are going to prove the two inequalities
\[
\limsup_{p\to 1^+} \lambda^s_{1,p}(\Omega)\le h_s(\Omega)\qquad \mbox{ and }\qquad \liminf_{p\to 1^+} \lambda^s_{1,p}(\Omega)\ge h_s(\Omega).
\]
{\it Limsup inequality}.  
We have for any $\varphi\in C_0^\infty(\Omega)$
$$
\lambda^s_{1,p}(\Omega)\leq \left(\frac{[\varphi]_{W^{s,p}(\mathbb{R}^N)}}{\|\varphi\|_{L^p(\Omega)}}\right)^p.
$$
Thus, 
\begin{equation}
\label{limsup}
\limsup_{p\to 1^+} \lambda^s_{1,p}(\Omega)\leq\limsup_{p\to 1^+}\left(\frac{[\varphi]_{W^{s,p}(\mathbb{R}^N)}}{\|\varphi\|_{L^p(\Omega)}}\right)^p=\frac{[\varphi]_{W^{s,1}(\mathbb{R}^N)}}{\|\varphi\|_{L^1(\Omega)}}.
\end{equation}
Thanks to equation \eqref{uguali} and by density of $C^\infty_0(\Omega)$ in $\widetilde W^{s,1}_0(\Omega)$, for every $\delta>0$ we can take $\varphi_\delta\in C_0^\infty(\Omega)$ so that 
$$
h_s(\Omega)+\delta\geq \frac{[\varphi_\delta]_{W^{s,1}(\mathbb{R}^N)}}{\|\varphi_\delta\|_{L^1(\Omega)}}.
$$ 
Then by appealing to \eqref{limsup} we get
$$
\limsup_{p\to 1^+} \lambda^s_{1,p}(\Omega)\leq h_s(\Omega)+\delta.
$$
Since $\delta$ is arbitrary, this proves the limsup inequality.
\vskip.2cm\noindent
{\it Liminf inequality}. Let $\{p_j\}_{j\in\mathbb{N}}\subset(1,+\infty)$ be a sequence converging to $1$ and such that
$$
\lim_{j\to\infty}\lambda^s_{1,p_j}(\Omega)=\liminf_{p\to 1^+} \lambda^s_{1,p}(\Omega),
$$
and let $u_{p_j}\in \widetilde W^{s,p_j}_0(\Omega)$ achieve the minimum in \eqref{p}.
Thanks to Lemma \ref{lm:sommabilità} we have the continuous embedding 
$$
\widetilde W^{s,p_j}_0(\Omega)\hookrightarrow \mathcal{W}^{s/2,1}_0(\Omega).
$$ 
More precisely, for $j$ large enough we can infer
\[
[u_{p_j}]_{W^{s/2,1}(\mathbb{R}^N)}\le C\, [u_{p_j}]_{W^{s,p_j}(\mathbb{R}^N)}=C\, \lambda^s_{1,p_j}(\Omega)^\frac{1}{p_j}\le C\, (1+h_s(\Omega)),
\]
for a constant $C>0$ which does not depends on $p_j$. By Corollary \ref{coro:compact}, up to extracting a subsequence, we can then suppose that the eigenfunctions $\{u_{p_j}\}_{j\in\mathbb{N}}$ are converging to a function $u$ in $L^q(\Omega)$ and almost everywhere, for an exponent $1<q<2N/(2N-s)$. In particular we have
\[
\lim_{j\to\infty}\Big|\|u_{p_j}\|_{L^{p_j}}-\|u\|_{L^1}\Big|\le \lim_{j\to\infty}\|u_{p_j}-u\|_{L^{p_j}}+\lim_{j\to\infty}\Big|\|u\|_{L^{p_j}}-\|u\|_{L^1}\Big|=0,
\]
where we used that $p_j\le q$ for $j$ sufficiently large. The previous implies that $\|u\|_{L^1}=1$.
From Fatou's Lemma we can then infer
$$
\liminf_{p\to 1^+} \lambda^s_{1,p}(\Omega)=\lim_{j\to\infty}\lambda^s_{1,p_j}(\Omega)=\lim_{j\to \infty}[u_{p_j}]^{p_j}_{W^{s,p_j}(\mathbb{R}^N)}\geq [u]_{W^{s,1}(\mathbb{R}^N)}\ge h_s(\Omega),
$$
where we used \eqref{pensaci!} in the last inequality.
\end{proof}
\begin{teo}[Convergence of the minimizers]
Let $\Omega\subset\mathbb{R}^N$ be an open and bounded Lipschitz set and for every $p>1$ let $u_p$ achieve the minimum \eqref{p}. Then there exists a sequence $\{p_j\}_{j\in\mathbb{N}}$ converging to $1$ such that $\{u_{p_j}\}_{j\in\mathbb{N}}$ converges strongly in $L^q(\Omega)$ for every $q<\infty$  to a solution $u_1$ of 
\[
\lambda^s_{1,1}(\Omega)=\min_{u\in \mathcal{W}^{s,1}_0(\Omega)} \left\{[u]_{W^{s,1}(\mathbb{R}^N)}\, :\, \|u\|_{L^1(\Omega)}=1,\ u\ge 0\right\}=h_s(\Omega).
\]
Moreover $u_1\in L^\infty(\Omega)$ and we have
\begin{equation}
\label{cag}
\|u_1\|_{L^\infty(\Omega)} \le \left[\frac{|B|^\frac{N-s}{N}}{P_s(B)}\right]^{\frac{N}{s}}\,h_s(\Omega)^\frac{N}{s},
\end{equation}
where $B$ is any $N-$dimensional ball. 
\end{teo}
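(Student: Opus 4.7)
The plan is to extract a convergent subsequence from $\{u_p\}$, identify the limit as a minimizer of the $\lambda^s_{1,1}(\Omega)$ problem by adapting the liminf argument in the proof of Theorem~\ref{teo:lim}, derive the $L^\infty$ bound \eqref{cag} by a direct Cheeger-type argument on the superlevel sets of the limit, and finally upgrade the convergence to every $L^q(\Omega)$ with $q<\infty$ through the quantitative estimate of Theorem~\ref{lm:stimetta}.

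First I would collect the compactness: Lemma~\ref{lm:sommabilità} guarantees that $\{u_p\}$ is uniformly bounded in the seminorm $[\,\cdot\,]_{W^{s/2,1}(\mathbb{R}^N)}$ as $p\to 1^+$, since $\lambda^s_{1,p}(\Omega)$ stays bounded thanks to Theorem~\ref{teo:lim}. By Corollary~\ref{coro:compact} a subsequence $\{u_{p_j}\}$ with $p_j\to 1^+$ converges in $L^q(\Omega)$ for some $q>1$ and almost everywhere to a nonnegative function $u_1\in\mathcal{W}^{s,1}_0(\Omega)$. The normalization $\|u_{p_j}\|_{L^{p_j}(\Omega)}=1$ passes to the limit exactly as in the proof of Theorem~\ref{teo:lim}, yielding $\|u_1\|_{L^1(\Omega)}=1$, and Fatou's Lemma together with Theorem~\ref{teo:lim} gives
\[
[u_1]_{W^{s,1}(\mathbb{R}^N)}\le\liminf_{j\to\infty}[u_{p_j}]^{p_j}_{W^{s,p_j}(\mathbb{R}^N)}=\lim_{j\to\infty}\lambda^s_{1,p_j}(\Omega)=h_s(\Omega),
\]
so that $u_1$ achieves the minimum in \eqref{smoke} and, by \eqref{uguali}, one has $\lambda^s_{1,1}(\Omega)=h_s(\Omega)$.

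Next, for the pointwise bound \eqref{cag} I would avoid passing to the limit in the Moser-type constant of Theorem~\ref{lm:stimetta} and instead use the calibration structure of minimizers provided by Theorem~\ref{teo:primo1}: for almost every $t>0$ such that $|\{u_1>t\}|>0$, the superlevel set $\{u_1>t\}$ is an $s$-Cheeger set of $\Omega$, so by the isoperimetric inequality \eqref{siso}
\[
h_s(\Omega)=\frac{P_s(\{u_1>t\})}{|\{u_1>t\}|}\ge\frac{P_s(B)}{|B|^{(N-s)/N}}\,|\{u_1>t\}|^{-s/N}.
\]
This forces $|\{u_1>t\}|\ge\bigl[P_s(B)\,|B|^{-(N-s)/N}/h_s(\Omega)\bigr]^{N/s}$ on every such level, while Markov's inequality combined with $\|u_1\|_{L^1(\Omega)}=1$ yields $|\{u_1>t\}|\le 1/t$. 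The two bounds are compatible only when $t\le\bigl[|B|^{(N-s)/N}/P_s(B)\bigr]^{N/s}\,h_s(\Omega)^{N/s}$; for any $t$ exceeding this threshold the distribution function of $u_1$ must vanish, which is precisely \eqref{cag}.

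Finally, to upgrade the mode of convergence to $L^q(\Omega)$ for every $q<\infty$ I would apply Theorem~\ref{lm:stimetta} to each $u_{p_j}$: together with $\|u_{p_j}\|_{L^{p_j}(\Omega)}=1$ and $\lambda^s_{1,p_j}(\Omega)\to h_s(\Omega)$, this reduces the problem to showing that the multiplicative constant $\widetilde C_{N,p_j,s}$ of Remark~\ref{oss:costante} remains bounded as $p_j\to 1^+$. The trailing factor $(N/(N-sp))^{N(N-sp)(p-1)/(s^2p^3)}$ tends to $1$, so the real task is to control the sharp Sobolev constant of $W^{s,p}_0(\mathbb{R}^N)$ near $p=1$, whose limit at $p=1$ is the sharp isoperimetric constant of \eqref{isoper}. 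Establishing this continuity is the main technical obstacle of the argument; once it is available, $\{u_{p_j}\}$ is uniformly bounded in $L^\infty(\Omega)$, and almost everywhere convergence on the bounded set $\Omega$ combined with the Dominated Convergence Theorem produces strong convergence in $L^q(\Omega)$ for every $q<\infty$.
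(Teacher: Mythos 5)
Your overall architecture matches the paper's: Lemma \ref{lm:sommabilità} plus Corollary \ref{coro:compact} for compactness, Fatou plus Theorem \ref{teo:lim} and \eqref{pensaci!} to identify the limit as a minimizer, and a uniform $L^\infty$ bound to upgrade the convergence to every $L^q$. Where you genuinely diverge is in the proof of \eqref{cag}: instead of passing to the limit in the Moser-iteration constant $\widetilde C_{N,p_j,s}$ of Remark \ref{oss:costante}, you exploit Theorem \ref{teo:primo1} (almost every superlevel set of $u_1$ with positive measure is an $s$-Cheeger set) together with the isoperimetric inequality \eqref{siso} and Markov's inequality to bound the essential supremum directly. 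This is essentially the argument the paper itself gives in the remark \emph{following} the theorem, where it is observed that \eqref{cag} holds for every minimizer of $\lambda^s_{1,1}(\Omega)$, not just for limits of the $u_p$; your route is therefore legitimate and in fact slightly more general, and it has the advantage of decoupling the $L^\infty$ bound on $u_1$ from any quantitative control of the constants in Theorem \ref{lm:stimetta}.

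The one genuine gap is the step you yourself flag: to get a uniform $L^\infty$ bound on the sequence $\{u_{p_j}\}$ (which you still need for the convergence in $L^q$ for all $q<\infty$, since the compactness from $\mathcal{W}^{s/2,1}_0(\Omega)$ only yields convergence in $L^q$ for $q<2N/(2N-s)$), you must show that $\widetilde C_{N,p_j,s}$ stays bounded as $p_j\to 1^+$, i.e.\ that the sharp Sobolev constant of $W^{s,p}_0(\mathbb{R}^N)$ does not blow up near $p=1$. You declare this "the main technical obstacle" and leave it open. The paper closes exactly this point by invoking \cite[Corollary 4.2]{FS} (with $r=p^*$ and equation (4.2) there), which gives that the limsup of the best Sobolev constants for $p>1$ is at most the sharp constant of the case $p=1$ in \eqref{isoper}. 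Without this input, or some substitute giving uniform integrability of $|u_{p_j}|^q$ for large $q$, your final step does not go through; with it, your argument is complete.
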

\begin{proof}
Observe that we have 
\[
[u_p]^p_{W^{s,p}(\mathbb{R}^N)}=\lambda^s_{1,p}(\Omega),
\]
then by Theorem \ref{teo:lim}
\[
\lim_{p\to 1^+} [u_p]^p_{W^{s,p}(\mathbb{R}^N)}=h_s(\Omega).
\]
As in the proof of Theorem \ref{teo:lim}, by Lemma \ref{lm:sommabilità} $\{u_p\}_{p>1}$ is equi-bounded in $\mathcal{W}^{s/2,1}_0(\Omega)$ for $p$ sufficiently small. Again thanks to Theorem \ref{teo:compact} we can extract a subsequence $\{u_{p_j}\}_{j\in\mathbb{N}}$ converging in $L^1$ to a function $u_1$ such that $\|u_1\|_{L^1}=1$. Thus we obtain
\[
h_s(\Omega)=\lim_{j\to \infty} [u_{p_j}]^{p_j}_{W^{s,p_j}(\mathbb{R}^N)}\ge [u_1]_{W^{s,1}(\mathbb{R}^N)}\ge h_s(\Omega),
\]
thus $u_1$ achieves $\lambda^s_{1,1}(\Omega)=h_s(\Omega)$.
Observe that the sequence $\{u_{p_j}\}_{j\in\mathbb{N}}$ is equi-bounded in $L^\infty(\Omega)$ thanks to Proposition \ref{lm:stimetta}, then $u_1$ as well is in $L^\infty(\Omega)$ since
\begin{equation}
\label{pax}
\|u_1\|_{L^\infty}\le \liminf_{j\to\infty} \|u_{p_j}\|_{L^\infty}\le \lim_{j\to\infty} \widetilde C_{N,p_j,s}\, \lambda^s_{1,p_j}(\Omega)^\frac{N}{s\,p_j^2}<+\infty.
\end{equation}
By a simple interpolation argument we then get that $\{u_{p_j}\}_{j\in\mathbb{N}}$ actually converges to $u_1$ in every $L^q(\Omega)$, with $1\le q<\infty$.
\vskip.2cm\noindent
In order to prove \eqref{cag}, we use \eqref{pax} and keep into account Remark \ref{oss:costante}, which permits to infer
\[
\limsup_{j\to\infty} \widetilde C_{N,p_j,s}=\limsup_{j\to\infty} \left(\sup_{u\in W^{s,p_j}_0(\mathbb{R}^N)\setminus\{0\}} \frac{\displaystyle\left(\int_{\mathbb{R}^N} |u|^\frac{N\,p_j}{N-s\,p_j}\, dx\right)^\frac{N-s\,p_j}{N}}{[u]^p_{W^{s,p_j}(\mathbb{R}^N)}}\right)^\frac{N}{s\,p_j^2}\le \left[\frac{|B|^\frac{N-s}{N}}{P_s(B)}\right]^{\frac{N}{s}}.
\]
In the last inequality we have used that the limsup of the best constant of the Sobolev inequality in $W^{s,p}_0(\mathbb{R}^N)$ for $p>1$ is certainly less than the best constant for the limit case\footnote{This is a consequence of \cite[Corollary 4.2]{FS} with $r=p^*$ and equation (4.2) there.} $p=1$, the latter being given by \eqref{isoper}.
Combining this and the convergence of $\lambda^s_{1,p_j}(\Omega)$ to $h_s(\Omega)$ concludes the proof.
\end{proof}
\begin{oss}
Actually, estimate \eqref{cag} holds true for {\it every} function $u\in\mathcal{W}^{s,1}_0(\Omega)$ attaining $\lambda^s_{1,1}(\Omega)$. Indeed, let $\Omega_t=\{x\in\Omega\, :\, u(x)>t\}$ and set $M=\mathrm{ess}\sup\{t\ge 0\,: \, |\Omega_t|>0\}$. By Theorem \ref{teo:primo1}, we know that $\Omega_t$ is a $s-$Cheeger set of $\Omega$, for almost every $t\in[0,M)$. We then get
\[
\begin{split}
1= \left(\frac{|\Omega_t|}{|\Omega_t|}\right)^\frac{N}{s}&=|\Omega_t|^\frac{N}{s}\,\left(\frac{h_s(\Omega)}{P_s(\Omega_t)}\right)^\frac{N}{s}\le |\Omega_t|^\frac{N}{s}\,\left(\frac{h_s(\Omega)}{P_s(B)}\right)^\frac{N}{s}\, \left(\frac{|B|}{|\Omega_t|}\right)^\frac{N-s}{s},
\end{split}
\]
where we used the isoperimetric inequality \eqref{siso}. Thus the previous gives
\[
1\le |\Omega_t|\, \left[\frac{|B|^\frac{N-s}{N}}{P_s(B)}\right]^\frac{N}{s}\, h_s(\Omega)^\frac{N}{s},\qquad \mbox{ for a.e. }t\in[0,M).
\] 
By integrating the previous in $t\in[0,M)$ and using Cavalieri principle, we get \eqref{cag} for $u$.
\par 
Observe that equality holds in \eqref{cag} when $\Omega=B$ is a ball and $u=1_B$. Thus the $L^\infty$ estimate \eqref{stimettaLinfty} becomes sharp in the limit as $p$ goes to $1$. In the local case, we recall that an $L^\infty$ estimate for functions attaining the Cheeger constant $h_1(\Omega)$
can be found in \cite[Theorem 4]{CM}.
\end{oss}
We have already seen in Proposition \ref{prop:hsh1} that $h_s(\Omega)$ can be estimated in terms of $h_1(\Omega)$. By using the recent $\Gamma-$convergence result by Ambrosio, De Philippis and Martinazzi in \cite{ADePM}, one can show that $h_s(\Omega)$ converges to $h_1(\Omega)$, as $s$ goes to $1$.
\begin{prop}
Let $\Omega\subset \mathbb{R}^N$ be an open and bounded set. Then we have
\begin{equation}
\label{scostanti}
\lim_{s\nearrow 1} (1-s)\, h_s(\Omega)=2\,\omega_{N-1}\, h_1(\Omega).
\end{equation}
\end{prop}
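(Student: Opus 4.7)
The plan is to split the proof into the two natural inequalities $\limsup_{s\nearrow 1}(1-s)\,h_s(\Omega)\le 2\,\omega_{N-1}\,h_1(\Omega)$ and $\liminf_{s\nearrow 1}(1-s)\,h_s(\Omega)\ge 2\,\omega_{N-1}\,h_1(\Omega)$. The limsup inequality should follow directly from Proposition \ref{prop:conv} applied to characteristic functions of (almost) optimal classical Cheeger competitors, while the liminf inequality requires the $\Gamma$-convergence machinery of Ambrosio--De Philippis--Martinazzi applied to the minimizers of $h_s(\Omega)$.

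For the limsup inequality, I would argue as follows. Fix any measurable $E\subset\Omega$ with $0<|E|<\infty$ and $P(E)<+\infty$; since $\Omega$ is bounded, $1_E\in BV(\mathbb{R}^N)$ has compact support, so Proposition \ref{prop:conv} gives $\lim_{s\nearrow 1}(1-s)\,P_s(E)=2\,\omega_{N-1}\,P(E)$. Combining this with the obvious bound $h_s(\Omega)\le P_s(E)/|E|$ yields $\limsup_{s\nearrow 1}(1-s)\,h_s(\Omega)\le 2\,\omega_{N-1}\,P(E)/|E|$, and taking the infimum over admissible $E$ produces the upper bound $2\,\omega_{N-1}\,h_1(\Omega)$. (Note that if $h_1(\Omega)=+\infty$, the inequality is trivial; otherwise the infimum is truly achieved along a minimizing sequence of classical Cheeger competitors.)

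For the liminf inequality, pick a sequence $s_k\nearrow 1$ realizing the liminf and let $E_k\subset\Omega$ be an $s_k$-Cheeger set, so that $(1-s_k)\,P_{s_k}(E_k)/|E_k|=(1-s_k)\,h_{s_k}(\Omega)$. The limsup inequality already proved implies that $(1-s_k)\,P_{s_k}(E_k)$ and $|E_k|$ are uniformly bounded from above, and a uniform lower bound $|E_k|\ge c>0$ follows from the isoperimetric inequality \eqref{siso}: indeed $h_{s_k}(\Omega)\ge P_{s_k}(B_1)\,|E_k|^{-s_k/N}$ for the unit ball $B_1$, so $|E_k|^{s_k/N}\ge (1-s_k)P_{s_k}(B_1)/((1-s_k)h_{s_k}(\Omega))$, and the right-hand side stays bounded away from $0$ as $k\to\infty$ since the numerator tends to $2\,\omega_{N-1}P(B_1)$ and the denominator is bounded. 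Now I invoke the $\Gamma$-convergence result of \cite{ADePM}: the functionals $u\mapsto(1-s)[u]_{W^{s,1}(\mathbb{R}^N)}$ $\Gamma$-converge in $L^1_{\mathrm{loc}}(\mathbb{R}^N)$ to $u\mapsto 2\,\omega_{N-1}|\nabla u|(\mathbb{R}^N)$ as $s\nearrow 1$, together with the associated compactness statement: any sequence with equibounded $(1-s_k)[u_k]_{W^{s_k,1}(\mathbb{R}^N)}$ and equibounded $L^1$ norm supported in a common bounded set is precompact in $L^1$. Applied to $u_k=1_{E_k}$, this yields (up to subsequence) $1_{E_k}\to 1_E$ in $L^1$ for some $E\subset\Omega$ with $|E|\ge c>0$, and the $\Gamma$-liminf inequality produces $\liminf_{k\to\infty}(1-s_k)\,P_{s_k}(E_k)\ge 2\,\omega_{N-1}\,P(E)$. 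Dividing by $|E_k|\to|E|$ and recalling that $E\subset\Omega$ is an admissible competitor for $h_1(\Omega)$,
\[
\liminf_{k\to\infty}(1-s_k)\,h_{s_k}(\Omega)\ge 2\,\omega_{N-1}\,\frac{P(E)}{|E|}\ge 2\,\omega_{N-1}\,h_1(\Omega).
\]

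The main obstacle is the lower bound step: the pointwise convergence provided by Proposition \ref{prop:conv} is insufficient because the sets $E_k$ vary with $k$, so one genuinely needs the $\Gamma$-liminf inequality together with its companion compactness result from \cite{ADePM}. A secondary concern is ensuring that the $L^1$-limit $E$ has positive Lebesgue measure; this is handled by the uniform lower volume bound derived from the isoperimetric inequality \eqref{siso}, which is already available in the paper.
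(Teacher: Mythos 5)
Your proposal is correct and follows essentially the same strategy as the paper: the liminf inequality is argued exactly as in the text (Cheeger sets $E_k$ for $s_k\nearrow 1$, the lower volume bound via the isoperimetric inequality \eqref{siso}, then compactness and the $\Gamma$-liminf from \cite{ADePM}). The only cosmetic difference is in the limsup step, where you apply Proposition \ref{prop:conv} directly to characteristic functions of finite-perimeter competitors instead of to smooth functions $u_\varepsilon\in C^\infty_0(\Omega)$ combined with \eqref{pensaci!}; both routes are valid.
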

\begin{proof}
For every $\varepsilon>0$, let $u_\varepsilon\in C^\infty_0(\Omega)$ be such that
\[
h_1(\Omega)+\varepsilon\ge \frac{|\nabla u_\varepsilon|(\mathbb{R}^N)}{\|u_\varepsilon\|_{L^1(\Omega)}}.
\]
By using \eqref{conv} we obtain
\[
2\,\omega_{N-1}\, (h_1(\Omega)+\varepsilon)\ge 2\,\omega_{N-1}\, \frac{|\nabla u_\varepsilon|(\mathbb{R}^N)}{\|u_\varepsilon\|_{L^1(\Omega)}}=\lim_{s\nearrow 1} (1-s)\, \frac{[u_\varepsilon]_{W^{s,1}(\mathbb{R}^N)}}{\|u_\varepsilon\|_{L^1(\Omega)}}\ge \lim_{s\nearrow 1} (1-s)\, h_s(\Omega),
\]
where we used again \eqref{pensaci!} to get the last estimate.
By the arbitrariness of $\varepsilon$, we get
\[
2\,\omega_{N-1}\, h_1(\Omega)\ge \limsup_{s\nearrow 1} (1-s)\, h_s(\Omega).
\]
On the other hand, let $\{s_j\}_{j\in\mathbb{N}}\subset(0,1)$ be a sequence increasingly converging to $1$ such that
\[
\lim_{j\to\infty} (1-s_j)\, h_{s_j}(\Omega)=\liminf_{s\nearrow 1}(1-s)\, h_s(\Omega).
\]
For every $j\in\mathbb{N}$ let us take $E_j\subset\Omega$ such that
\begin{equation}
\label{equal}
(1-s_j)\, h_{s_j}(\Omega)=(1-s_j)\, \frac{P_{s_j}(E_j)}{|E_j|},
\end{equation}
so that
\[
(1-s_j)\, P_{s_j}(E_j)\le C.
\]
Up to a subsequence (not relabeled), the sequence $\{E_j\}_{j\in\mathbb{N}}$ is then converging in $L^1$ to a Borel set $E_\infty\subset\Omega$, thanks to \cite[Theorem 1]{ADePM}. This implies in particular that $|E_{j}|$ converges to $|E_\infty|$, but we have to exclude that $|E_\infty|=0$. At this aim we observe that by \eqref{equal}, proceeding as in the proof of Proposition 5.3, we get
\[
|E_j|\ge \left(\frac{1}{C_{N,\Omega}} (1-s_j)\, P_{s_j}(B)\, |B|^\frac{s_j-N}{N}\right)^\frac{N}{s_j},
\]
where $B$ is any $N-$dimensional ball. By passing to the limit as $j$ goes to $\infty$ in the previous and using \eqref{conv}
\[
|E_\infty|\ge \left(\frac{2\,\omega_{N-1}}{C_{N,\Omega}}\, P(B)\, |B|^\frac{1-N}{N}\right)^N>0,
\] 
as desired. We can now use the $\Gamma-$liminf inequality of \cite[Theorem 2]{ADePM} to infer\footnote{Again, our definition of $P_s(\Omega)$ differs by a multiplicative factor $2$ from that in \cite{ADePM}.}
\[
\liminf_{s\to 1} (1-s)\, h_s(\Omega)\ge \liminf_{j\to \infty} (1-s_j)\, \frac{P_{s_j}(E_{s_j})}{|E_{s_j}|}\ge 2\,\omega_{N-1}\, \frac{P(E_\infty)}{|E_\infty|}\ge 2\, \omega_{N-1}\, h_1(\Omega).
\]
This concludes the proof.
\end{proof}

\section{A nonlocal Max Flow Min Cut Theorem}
\label{sec:maxmin}

It is well-known (see \cite{Gr}) that for the Cheeger constant $h_1$, we have the following dual characterization in terms of vector fields with prescribed divergence
\[
\frac{1}{h_1(\Omega)}=\min_{V\in L^\infty(\Omega;\mathbb{R}^N)}\left\{\|V\|_{L^\infty(\Omega)}\, : \, -\mathrm{div\,} V=1\right\},
\]
where the divergence constraint has to be attained in distributional sense, i.e.
\[
\int_\Omega \langle \nabla \varphi,V\rangle\, dx=\int_\Omega \varphi\, dx,\qquad \mbox{ for every } \varphi\in C^\infty_0(\Omega).
\]
The previous in turn can be rewritten as
\[
h_1(\Omega)=\sup\left\{ h\in \mathbb{R}\, :\, \exists V\in L^\infty(\Omega;\mathbb{R}^N) \mbox{ such that } \|V\|_{L^\infty}\le 1 \mbox{ and } -\mathrm{div\,}V\ge h\right\},
\]
and the latter is usually referred to as a continuous version of the Min Cut Max Flow Theorem (see \cite{Gr} for a detailed discussion). In this section we show that similar characterizations hold for $h_s(\Omega)$ as well. 
\vskip.2cm
Let $\Omega\subset\mathbb{R}^N$ be as always an open and bounded set. Let $p\in[1,\infty)$ and $s\in(0,1)$, we set $q=p/(p-1)$ if $p>1$ or $q=\infty$ is $p=1$ and 
\[
\widetilde W^{-s,q}(\Omega)=\left\{F:\widetilde W^{s,p}_0(\Omega)\to\mathbb{R}\, :\, F \mbox{ linear and continuous}\right\}.
\]
We also define the linear and continuous operator $R_{s,p}:\widetilde W^{s,p}_0(\Omega)\to L^p(\mathbb{R}^N\times \mathbb{R}^N)$ by
\[
R_{s,p}(u)(x,y)=\frac{u(x)-u(y)}{|x-y|^{\frac{N}{p}+s}},\qquad \mbox{ for every } u\in \widetilde W^{s,p}_0(\Omega).
\]
\begin{lm}
\label{lm:aggiunto}
The operator $R^*_{s,p}:L^q(\mathbb{R}^N\times\mathbb{R}^N)\to \widetilde W^{-s,q}(\Omega)$ defined by
\begin{equation}
\label{Tstar}
\langle R^*_{s,p}\,(\varphi),u\rangle:=\int_{\mathbb{R}^N}\int_{\mathbb{R}^N} \varphi(x,y)\,\frac{u(x)-u(y)}{|x-y|^{\frac{N}{p}+s}}\, dx\,dy,\qquad \mbox{ for every }u \in \widetilde W^{s,p}_0(\Omega),
\end{equation}
is linear and continuous. Moreover, $R^{*}_{s,p}$ is the adjoint of $R_{s,p}$.
\end{lm}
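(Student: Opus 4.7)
The plan is to reduce everything to H\"older's inequality plus the defining isometric identity
\[
\|R_{s,p}(u)\|_{L^p(\mathbb{R}^N\times\mathbb{R}^N)}^{p}=\int_{\mathbb{R}^N}\int_{\mathbb{R}^N}\frac{|u(x)-u(y)|^p}{|x-y|^{N+s\,p}}\,dx\,dy=[u]_{W^{s,p}(\mathbb{R}^N)}^{p}=\|u\|_{\widetilde W^{s,p}_0(\Omega)}^{p},
\]
which says that $R_{s,p}:\widetilde W^{s,p}_0(\Omega)\to L^p(\mathbb{R}^N\times\mathbb{R}^N)$ is linear and in fact an isometry onto its image.

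First I would check that for fixed $\varphi\in L^q(\mathbb{R}^N\times\mathbb{R}^N)$, the map $u\mapsto \langle R^*_{s,p}(\varphi),u\rangle$ defined by \eqref{Tstar} is a well-defined linear functional on $\widetilde W^{s,p}_0(\Omega)$. Linearity in $u$ is obvious from the integrand. For the continuity estimate, a direct application of H\"older's inequality in $L^q\times L^p$ on $\mathbb{R}^N\times\mathbb{R}^N$ gives
\[
\bigl|\langle R^*_{s,p}(\varphi),u\rangle\bigr|\le \|\varphi\|_{L^q(\mathbb{R}^N\times\mathbb{R}^N)}\,\|R_{s,p}(u)\|_{L^p(\mathbb{R}^N\times\mathbb{R}^N)}=\|\varphi\|_{L^q(\mathbb{R}^N\times\mathbb{R}^N)}\,\|u\|_{\widetilde W^{s,p}_0(\Omega)},
\]
so $R^*_{s,p}(\varphi)\in\widetilde W^{-s,q}(\Omega)$ with $\|R^*_{s,p}(\varphi)\|_{\widetilde W^{-s,q}(\Omega)}\le \|\varphi\|_{L^q(\mathbb{R}^N\times\mathbb{R}^N)}$. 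Linearity of $\varphi\mapsto R^*_{s,p}(\varphi)$ is immediate from the definition, and the above bound shows that this map is continuous from $L^q(\mathbb{R}^N\times\mathbb{R}^N)$ to $\widetilde W^{-s,q}(\Omega)$ with operator norm at most $1$.

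Finally, to identify $R^*_{s,p}$ as the Banach-space adjoint of $R_{s,p}$, I would simply unravel the standard duality pairing between $L^p$ and $L^q$ on $\mathbb{R}^N\times\mathbb{R}^N$: for every $u\in\widetilde W^{s,p}_0(\Omega)$ and $\varphi\in L^q(\mathbb{R}^N\times\mathbb{R}^N)$,
\[
\langle \varphi,R_{s,p}(u)\rangle_{L^q,L^p}=\int_{\mathbb{R}^N}\int_{\mathbb{R}^N}\varphi(x,y)\,\frac{u(x)-u(y)}{|x-y|^{\frac{N}{p}+s}}\,dx\,dy=\langle R^*_{s,p}(\varphi),u\rangle_{\widetilde W^{-s,q},\widetilde W^{s,p}_0},
\]
which is exactly the adjointness relation. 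There is no real obstacle here: once the correct normalizing weight $|x-y|^{-(N/p+s)}$ is in place, the Gagliardo seminorm is literally the $L^p$-norm of $R_{s,p}(u)$, and the whole statement collapses to H\"older's inequality and the definition of the adjoint.
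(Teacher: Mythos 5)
Your proposal is correct and follows essentially the same route as the paper: Hölder's inequality on $\mathbb{R}^N\times\mathbb{R}^N$ combined with the identity $\|R_{s,p}(u)\|_{L^p(\mathbb{R}^N\times\mathbb{R}^N)}=\|u\|_{\widetilde W^{s,p}_0(\Omega)}$ gives the bound $\|R^*_{s,p}(\varphi)\|_{\widetilde W^{-s,q}(\Omega)}\le\|\varphi\|_{L^q(\mathbb{R}^N\times\mathbb{R}^N)}$, and the adjointness is the tautological unravelling of the duality pairing. The only cosmetic difference is that the paper first regards $R^*_{s,p}(\varphi)$ as a distribution and extends it by density, whereas you observe directly that the Hölder estimate holds on all of $\widetilde W^{s,p}_0(\Omega)$; both are fine.
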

\begin{proof}
We start by observing that for every $\varphi\in L^q(\mathbb{R}^N\times\mathbb{R}^N)$, $R^*_{s,p}(\varphi)$ defines a distribution on $\Omega$, i.e. $R^*_{s,p}(\varphi)\in\mathcal{D}'(\Omega)$.
Then by H\"older inequality, we get
\begin{equation}
\label{holder}
|\langle R^*_{s,p}(\varphi),u\rangle|\le \|\varphi\|_{L^q(\mathbb{R}^N\times\mathbb{R}^N)}\, \|u\|_{\widetilde W^{s,p}_0(\Omega)}.
\end{equation}
By density this implies that $R^*_{s,p}(\varphi)$ can be (uniquely) extended to an element of $\widetilde W^{-s,q}(\Omega)$ and
\[
\|R^*_{s,p}(\varphi)\|_{\widetilde W^{-s,q}(\Omega)}\le \|\varphi\|_{L^q(\mathbb{R}^N\times\mathbb{R}^N)}.
\] 
Then $R^*_{s,p}$ is well-defined and is of course a linear operator. The previous estimate implies that this is continuous as well.
\vskip.2cm\noindent
To prove the second statement, by the very definition of $R^*_{s,p}$ we get\footnote{Given a topological vector space $X$ and its dual space $X^*$, we denote by $\langle \cdot,\cdot,\rangle_{(X^*,X)}$ the relevant duality pairing.}
\[
\langle R^*_{s,p}(\varphi),u\rangle_{(\widetilde W^{-s,q}(\Omega),\widetilde W^{s,p}_0(\Omega))}=\langle \varphi,R_{s,p}(u)\rangle_{(L^q(\mathbb{R}^N\times \mathbb{R}^N),L^p(\mathbb{R}^N\times \mathbb{R}^N))}.
\]
This concludes the proof.
\end{proof}
\begin{oss}
\label{oss:divergence}
The operator $R^*_{s,p}$ has to be thought of as a sort of nonlocal divergence. Observe that by performing a discrete ``integration by parts'', $R^*_{s,p}(\varphi)$ can be formally written as
\begin{equation}
\label{erascazzata}
R^*_{s,p}(\varphi)(x)=\int_{\mathbb{R}^N} \frac{\varphi(x,y)-\varphi(y,x)}{|x-y|^{\frac{N}{p}+s}}\, dy,\qquad x\in\mathbb{R}^N,
\end{equation}
so that
\[
\langle R^*_{s,p}(\varphi),u\rangle=\int_{\Omega} \left(\int_{\mathbb{R}^N} \frac{\varphi(x,y)-\varphi(y,x)}{|x-y|^{\frac{N}{p}+s}}\, dy\right)\, u(x)\, dx,\qquad u\in \widetilde W^{s,p}_0(\Omega).
\]
Indeed, by using this formula
\[
\begin{split}
\int_{\mathbb{R}^N} u(x)\, R^*_{s,p}(\varphi)(x)\, dx
&=\int_{\mathbb{R}^N}\int_{\mathbb{R}^N} u(x) \frac{\varphi(x,y)}{|x-y|^{\frac{N}{p}+s}}\, dy\, dx-\int_{\mathbb{R}^N}\int_{\mathbb{R}^N} u(x) \frac{\varphi(y,x)}{|x-y|^{\frac{N}{p}+s}}\, dy\, dx,
\end{split}
\]
and exchanging the role of $x$ and $y$ in the second integral, we obtain that this is formally equivalent to \eqref{Tstar}.
\end{oss}
We record the following result for completeness.
\begin{prop}
\label{lm:convexduality}
Let $1<p<\infty$ and $s\in(0,1)$. Given an open and bounded set $\Omega\subset\mathbb{R}^N$, for every $f\in\widetilde  W^{-s,q}(\Omega)$ we have
\begin{equation}
\label{minmax}
\begin{split}
\max_{u\in\widetilde  W^{s,p}_0(\Omega)} &\left\{\langle f,u\rangle-\frac{1}{p}\,\int_{\mathbb{R}^N}\int_{\mathbb{R}^N} \frac{|u(x)-u(y)|^p}{|x-y|^{N+s\,p}}\, dx\,dy
\right\}\\
&=\min_{\varphi\in L^q(\mathbb{R}^N\times\mathbb{R}^N)}\left\{\frac{1}{q} \int_{\mathbb{R}^N}\int_{\mathbb{R}^N} |\varphi|^q\, dx\,dy\, :\, R^*_{s,p}(\varphi)=f \mbox{ in }\Omega\right\},
\end{split}
\end{equation}
where as before $q=p/(p-1)$ and the constraint $R^*_{s,p}(\varphi)=f$ has to be attained in the sense
\[
\langle f,u\rangle=\int_{\mathbb{R}^N}\int_{\mathbb{R}^N} \varphi(x,y)\,\frac{u(x)-u(y)}{|x-y|^{\frac{N}{p}+s}}\, dx\,dy,\qquad \mbox{ for every }u \in C^\infty_0(\Omega).
\]
\end{prop}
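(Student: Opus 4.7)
The plan is to recognize the identity as an instance of Fenchel--Rockafellar duality applied to the continuous linear operator $A:=R_{s,p}:\widetilde W^{s,p}_0(\Omega)\to L^p(\mathbb{R}^N\times\mathbb{R}^N)$, whose adjoint is $A^*=R^*_{s,p}$ by Lemma \ref{lm:aggiunto}. Define the convex l.s.c.\ functionals
\[
F(u)=-\langle f,u\rangle,\qquad u\in \widetilde W^{s,p}_0(\Omega),\qquad G(v)=\frac{1}{p}\int_{\mathbb{R}^N}\int_{\mathbb{R}^N} |v(x,y)|^p\,dx\,dy,\qquad v\in L^p(\mathbb{R}^N\times\mathbb{R}^N).
\]
Since $[u]^p_{W^{s,p}(\mathbb{R}^N)}=\|R_{s,p}(u)\|^p_{L^p(\mathbb{R}^N\times\mathbb{R}^N)}$, the primal problem can be rewritten as $-\inf_u\{F(u)+G(A u)\}$.

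First I would compute the Legendre--Fenchel conjugates. For $F^*:\widetilde W^{-s,q}(\Omega)\to \mathbb{R}\cup\{+\infty\}$ one immediately gets $F^*(\xi)=0$ if $\xi=-f$ and $F^*(\xi)=+\infty$ otherwise, while, by the classical duality between $L^p$ and $L^q$ (Young's inequality with equality at $\varphi=|v|^{p-2}v$), one has $G^*(\varphi)=\tfrac{1}{q}\|\varphi\|^q_{L^q(\mathbb{R}^N\times \mathbb{R}^N)}$. The Fenchel--Rockafellar formula then reads
\[
\inf_u\{F(u)+G(A u)\}=\sup_{\varphi\in L^q}\big\{-F^*(A^*\varphi)-G^*(-\varphi)\big\}=-\inf\Big\{\tfrac{1}{q}\|\varphi\|_{L^q}^q\,:\,R^*_{s,p}(\varphi)=f\Big\},
\]
after the relabeling $\varphi\leftrightarrow -\varphi$. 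Multiplying by $-1$ gives the equality claimed in \eqref{minmax}.

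To apply strong duality I must check a qualification condition: it suffices that there exists $u_0\in \widetilde W^{s,p}_0(\Omega)$ at which $G\circ A$ is finite and $G$ is continuous at $A u_0$. This is trivial since $G$ is finite and continuous everywhere on $L^p(\mathbb{R}^N\times\mathbb{R}^N)$ (just take $u_0=0$). The standard form of Fenchel--Rockafellar (see e.g.\ Brezis, \emph{Functional Analysis}, Thm.\ 1.12, or Ekeland--T\'emam) then guarantees the equality of the two extremal values.

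The remaining task is attainment on both sides. For the primal, the functional $u\mapsto \langle f,u\rangle-\tfrac{1}{p}[u]^p_{W^{s,p}(\mathbb{R}^N)}$ is strictly concave, upper semicontinuous for the weak topology and, since $|\langle f,u\rangle|\le \|f\|_{\widetilde W^{-s,q}}\,\|u\|_{\widetilde W^{s,p}_0(\Omega)}$ and $p>1$, coercive from above; the direct method in the reflexive space $\widetilde W^{s,p}_0(\Omega)$ yields a (unique) maximizer. For the dual, when the admissible set is non-empty---which is granted by the duality above as soon as the primal supremum is finite, and it is, for the same coercivity reason---the functional $\varphi\mapsto \tfrac{1}{q}\|\varphi\|^q_{L^q}$ is strictly convex, coercive and l.s.c.\ on the reflexive space $L^q(\mathbb{R}^N\times\mathbb{R}^N)$ (here $1<q<\infty$), while the constraint $R^*_{s,p}(\varphi)=f$ defines a weakly closed affine subspace thanks to the continuity of $R^*_{s,p}$ obtained in Lemma \ref{lm:aggiunto}. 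The direct method thus provides the unique minimizer.

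The main obstacle I anticipate is verifying, cleanly and once and for all, the qualification hypothesis needed for strong duality; this, however, reduces to the observation that $G$ is everywhere continuous on $L^p(\mathbb{R}^N\times\mathbb{R}^N)$, so no further smallness or interior-point argument is required.
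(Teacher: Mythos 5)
Your proposal is correct and follows essentially the same route as the paper, which also casts the problem as $\max_{u}\langle f,u\rangle-\mathcal{G}(R_{s,p}u)$ and invokes Fenchel--Rockafellar duality (citing Ekeland--T\'emam) together with the identification $A^*=R^*_{s,p}$ from Lemma \ref{lm:aggiunto}. You merely spell out the conjugates, the qualification condition and the attainment arguments in more detail than the paper does.
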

\begin{proof}
Observe that the maximization problem in the left-hand side of \eqref{minmax} can be written in the form
\[
\max_{x\in X} \langle x^*,x\rangle-\mathcal{G}(A(x)),\qquad x^*\in X^*,
\]
with $X$ reflexive Banach space having dual $X^*$, $\mathcal{G}:Y\to\mathbb{R}$ a lower semicontinuous convex functional and $A:X\to Y$ a linear continuous operator. Specifically, we have
\[
X=\widetilde W^{s,p}_0(\Omega),\qquad X^*=\widetilde W^{-s,q}(\Omega),\qquad A=R_{s,p},\qquad Y=L^p(\mathbb{R}^N\times\mathbb{R}^N), 
\]
and
\[
\mathcal{G}(\xi)=\frac{1}{p} \|\xi\|^p_{L^p(\mathbb{R}^N\times\mathbb{R}^N)},\qquad \xi\in L^p(\mathbb{R}^N\times\mathbb{R}^N).
\]
Then general duality results of Convex Analysis (see \cite[Proposition 5, page 89]{Ek}) guarantees that
\[
\max_{x\in X} \langle x^*,x\rangle-\mathcal{G}(A(x))=\min\{\mathcal{G}^*(\xi^*)\, :\, A^*(\xi^*)=x^*\},
\]
where $A^*:Y^*\to X^*$ is the adjoint operator of $A$. In our case, we have $Y^*=L^q(\mathbb{R}^N\times\mathbb{R}^N)$ and of course $A^*$ coincides with the operator defined by \eqref{Tstar}, thanks to Lemma \ref{lm:aggiunto}.
\end{proof}
By a simple homogeneity argument, the concave maximization problem in \eqref{minmax} is equivalent to
\[
\max_{u\in \widetilde W^{s,p}_0(\Omega)\setminus\{0\}}\frac{|\langle f,u\rangle|^p}{\displaystyle\int_{\mathbb{R}^N}\int_{\mathbb{R}^N} \frac{|u(x)-u(y)|^p}{|x-y|^{N+s\,p}}\, dx\,dy},
\]
more precisely we have
\[
\begin{split}
\max_{u\in \widetilde W^{s,p}_0(\Omega)}& \left\{\langle f,u\rangle-\frac{1}{p}\,\int_{\mathbb{R}^N}\int_{\mathbb{R}^N} \frac{|u(x)-u(y)|^p}{|x-y|^{N+s\,p}}\, dx\,dy
\right\}\\
&=\left(\frac{p-1}{p}\right)\,\left(\max_{u\in \widetilde W^{s,p}_0(\Omega)\setminus\{0\}}\frac{|\langle f,u\rangle|^p}{\displaystyle\int_{\mathbb{R}^N}\int_{\mathbb{R}^N} \frac{|u(x)-u(y)|^p}{|x-y|^{N+s\,p}}\, dx\,dy}\right)^\frac{1}{p-1}\\
&=\frac{1}{q}\, \|f\|^q_{\widetilde W^{-s,q}(\Omega)},
\end{split}
\]
by recalling that $q=p/(p-1)$. As a straightforward consequence, we have the following.
\begin{coro}
Let $1<q<\infty$ and $s\in(0,1)$, then for every $f\in \widetilde W^{-s,q}(\Omega)$
\[
\|f\|_{\widetilde W^{-s,q}(\Omega)}=\min_{\varphi\in L^q(\mathbb{R}^N\times\mathbb{R}^N)}\left\{\|\varphi\|_{L^q(\mathbb{R}^N\times\mathbb{R}^N)}\, :\, R^*_{s,p}(\varphi)=f \mbox{ in }\Omega\right\}.
\]
\end{coro}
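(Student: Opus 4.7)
The statement is essentially a corollary of Proposition \ref{lm:convexduality} combined with a scaling argument, and in fact all the ingredients are already spelled out in the display immediately preceding the statement. My plan is therefore to make that derivation explicit.

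First, I would start from the duality identity proved in Proposition \ref{lm:convexduality}:
\[
\max_{u\in \widetilde W^{s,p}_0(\Omega)} \Big\{\langle f,u\rangle -\tfrac{1}{p}\,\|u\|^p_{\widetilde W^{s,p}_0(\Omega)}\Big\}=\min_{\varphi\in L^q}\Big\{\tfrac{1}{q}\|\varphi\|^q_{L^q}\,:\,R^*_{s,p}(\varphi)=f\Big\},
\]
where I have used $\|R_{s,p}(u)\|_{L^p(\mathbb{R}^N\times\mathbb{R}^N)}=[u]_{W^{s,p}(\mathbb{R}^N)}=\|u\|_{\widetilde W^{s,p}_0(\Omega)}$.

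Next, I would rewrite each side by a one-variable optimization in a positive scaling. For the left-hand side, fixing $u\neq 0$ and maximizing $t\,\langle f,u\rangle-\frac{t^p}{p}\|u\|^p$ over $t>0$ gives $t=(\langle f,u\rangle/\|u\|^p)^{1/(p-1)}$ (assuming $\langle f,u\rangle\ge 0$; otherwise replace $u$ by $-u$), with optimal value $\frac{1}{q}|\langle f,u\rangle|^q/\|u\|^q_{\widetilde W^{s,p}_0(\Omega)}$, since $p/(p-1)=q$ and $1/(p-1)=q-1$. Taking the supremum in $u$ and using the definition of the dual norm yields
\[
\max_{u\in \widetilde W^{s,p}_0(\Omega)}\Big\{\langle f,u\rangle -\tfrac{1}{p}\,\|u\|^p_{\widetilde W^{s,p}_0(\Omega)}\Big\}=\tfrac{1}{q}\,\|f\|^q_{\widetilde W^{-s,q}(\Omega)}.
\]
For the right-hand side, since the constraint $R^*_{s,p}(\varphi)=f$ is linear and $\|\cdot\|_{L^q}$ is homogeneous of degree $1$, minimizing $\frac{1}{q}\|\varphi\|^q_{L^q}$ under this constraint is equivalent to minimizing $\|\varphi\|_{L^q}$ under the same constraint and then raising to the power $q$ and dividing by $q$:
\[
\min_{\varphi}\Big\{\tfrac{1}{q}\|\varphi\|^q_{L^q}\,:\,R^*_{s,p}(\varphi)=f\Big\}=\tfrac{1}{q}\Big(\min_{\varphi}\big\{\|\varphi\|_{L^q}\,:\,R^*_{s,p}(\varphi)=f\big\}\Big)^q.
\]

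Finally, equating the two reformulations through Proposition \ref{lm:convexduality} gives
\[
\tfrac{1}{q}\,\|f\|^q_{\widetilde W^{-s,q}(\Omega)}=\tfrac{1}{q}\Big(\min_{\varphi}\big\{\|\varphi\|_{L^q}\,:\,R^*_{s,p}(\varphi)=f\big\}\Big)^q,
\]
and extracting $q$-th roots produces exactly the claimed identity. There is no real obstacle here: everything is either recorded in Proposition \ref{lm:convexduality} or follows from elementary one-variable calculus; the only point deserving a line of care is the interpretation of the constraint $R^*_{s,p}(\varphi)=f$ as an identity in $\widetilde W^{-s,q}(\Omega)$, which is already handled by Lemma \ref{lm:aggiunto} together with the density of $C^\infty_0(\Omega)$ in $\widetilde W^{s,p}_0(\Omega)$.
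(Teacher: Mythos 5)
Your argument is correct and follows the same route as the paper: the paper obtains this corollary as a ``straightforward consequence'' of Proposition \ref{lm:convexduality} together with precisely the homogeneity computation you carry out (identifying the left-hand side of \eqref{minmax} with $\frac{1}{q}\|f\|^q_{\widetilde W^{-s,q}(\Omega)}$ and the right-hand side with $\frac{1}{q}$ times the $q$-th power of the constrained minimum of $\|\varphi\|_{L^q}$). The only cosmetic remark is that the identity $\min_\varphi \frac{1}{q}\|\varphi\|^q_{L^q}=\frac{1}{q}\bigl(\min_\varphi\|\varphi\|_{L^q}\bigr)^q$ over the affine constraint set is really a consequence of the monotonicity of $t\mapsto t^q/q$ rather than of homogeneity, but the conclusion is unaffected.
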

\begin{oss}
By looking at the formal expression \eqref{erascazzata} for $R^*_{s,p}$, we may notice that for a symmetric function $\varphi\in L^q(\mathbb{R}^N\times\mathbb{R}^N)$, i.e. if we have
\[
\varphi(x,y)=\varphi(y,x),
\]
then of course $R^*_{s,p}(\varphi)\equiv 0$. 
Roughly speaking, this means that functions symmetric in the two variables play in this context the same role as free divergence vector fields in the usual local case.
\end{oss}
Then the main result of this section is the following alternative characterization of the $s-$Cheeger constant of a set, which can be used to deduce lower bounds on $h_s(\Omega)$.
\begin{teo}
\label{teo:dualcheeger}
Let $\Omega\subset\mathbb{R}^N$ be an open and bounded Lipschitz set and $s\in(0,1)$. Then we have
\[
\frac{1}{h_s(\Omega)}=\min\big\{\|\varphi\|_{L^\infty(\mathbb{R}^N\times\mathbb{R}^N)}\, :\, R^*_{s,1}(\varphi)=1\big\}.
\]
\end{teo}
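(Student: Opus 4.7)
The plan is to combine the variational identification of $h_s(\Omega)$ from Theorem \ref{teo:primo1} with an $(L^1,L^\infty)$ duality / Hahn--Banach argument, in direct analogy with Strang's proof of the local Max Flow--Min Cut formula. Throughout, I interpret the constraint $R^*_{s,1}(\varphi)=1$ in $\Omega$ as meaning that
\[
\int_\Omega u\,dx = \int_{\mathbb{R}^N}\int_{\mathbb{R}^N}\varphi(x,y)\,\frac{u(x)-u(y)}{|x-y|^{N+s}}\,dx\,dy, \qquad \forall\, u\in C^\infty_0(\Omega),
\]
which, by density and continuity of $R^*_{s,1}$, automatically extends to every $u\in\widetilde W^{s,1}_0(\Omega)$.

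For the easy direction, if $\varphi\in L^\infty(\mathbb{R}^N\times\mathbb{R}^N)$ satisfies the constraint, then plugging in a nonnegative $u\in C^\infty_0(\Omega)$ and applying H\"older gives $\|u\|_{L^1(\Omega)}\le \|\varphi\|_{L^\infty}\,[u]_{W^{s,1}(\mathbb{R}^N)}$. Taking the supremum over such $u$ and using that, by Theorem \ref{teo:primo1} (whose conclusion $\lambda^s_{1,1}(\Omega)=h_s(\Omega)$ is where the Lipschitz hypothesis is spent) together with the density of $C^\infty_0(\Omega)$ in $\widetilde W^{s,1}_0(\Omega)$,
\[
\frac{1}{h_s(\Omega)}=\sup_{0\not\equiv u\in C^\infty_0(\Omega),\ u\ge 0}\frac{\|u\|_{L^1(\Omega)}}{[u]_{W^{s,1}(\mathbb{R}^N)}},
\]
I conclude that $\|\varphi\|_{L^\infty}\ge 1/h_s(\Omega)$. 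This gives one inequality, as well as the observation that the infimum on the right-hand side of the claimed identity is bounded below by $1/h_s(\Omega)$.

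The harder direction is to produce an admissible competitor $\varphi$ that saturates this bound, which I would obtain by a Hahn--Banach extension. The operator $R_{s,1}:C^\infty_0(\Omega)\to L^1(\mathbb{R}^N\times\mathbb{R}^N)$ is injective (any $u\in C^\infty_0(\Omega)$ in its kernel would be a.e.\ constant, hence zero because of compact support). On the subspace $V:=R_{s,1}(C^\infty_0(\Omega))$ the assignment $L(R_{s,1}(u)):=\int_\Omega u\,dx$ therefore unambiguously defines a linear functional. Applying Theorem \ref{teo:primo1} to $|u|\in\mathcal{W}^{s,1}_0(\Omega)$ and using the pointwise bound $\big||u(x)|-|u(y)|\big|\le |u(x)-u(y)|$, I get
\[
|L(R_{s,1}(u))|\le \int_\Omega|u|\,dx\le \frac{1}{h_s(\Omega)}\,[|u|]_{W^{s,1}(\mathbb{R}^N)}\le \frac{1}{h_s(\Omega)}\,\|R_{s,1}(u)\|_{L^1(\mathbb{R}^N\times\mathbb{R}^N)},
\]
so $L$ is continuous on $V$ with norm at most $1/h_s(\Omega)$. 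Extending $L$ via Hahn--Banach to a continuous functional of the same norm on all of $L^1(\mathbb{R}^N\times\mathbb{R}^N)$ and invoking the duality $(L^1)^*=L^\infty$, I obtain $\varphi\in L^\infty(\mathbb{R}^N\times\mathbb{R}^N)$ with $\|\varphi\|_{L^\infty}\le 1/h_s(\Omega)$ representing that extension; unwinding the definitions of $L$ and $R^*_{s,1}$ then shows $R^*_{s,1}(\varphi)=1$ in $\Omega$.

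Combining the two directions, the $\varphi$ produced by Hahn--Banach realizes $\|\varphi\|_{L^\infty}=1/h_s(\Omega)$, so the infimum is attained and equals $1/h_s(\Omega)$. The one delicate bookkeeping point I foresee is the legitimacy of applying Theorem \ref{teo:primo1} to $|u|$: this is not $C^\infty_0$, only Lipschitz with compact support in $\Omega$, but that still places it in $\mathcal{W}^{s,1}_0(\Omega)$, where Theorem \ref{teo:primo1} is exactly stated. Apart from this, no genuine new difficulty arises: all the substantive work (existence of $s$-Cheeger sets, and the coincidence $\lambda^s_{1,1}(\Omega)=h_s(\Omega)$ for Lipschitz $\Omega$) has already been carried out upstream, and the remainder is standard $(L^1,L^\infty)$ convex duality.
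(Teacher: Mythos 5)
Your proof is correct, and it reaches the result by a route that is equivalent to but more hands-on than the paper's. The paper first rewrites $1/h_s(\Omega)$ as $\sup\{\langle 1,u\rangle : [u]_{W^{s,1}(\mathbb{R}^N)}\le 1\}$ over $\widetilde W^{s,1}_0(\Omega)$ (using Lemma \ref{lm:salvaculo} and Theorem \ref{teo:primo1}, exactly as you do) and then invokes, in one stroke, the abstract Fenchel--Rockafellar duality theorem of Ekeland--Temam with $\mathcal{G}$ the indicator of the unit ball of $L^1(\mathbb{R}^N\times\mathbb{R}^N)$ and $A=R_{s,1}$; the conjugate $\mathcal{G}^*$ is the $L^\infty$ norm and the dual constraint is $R^*_{s,1}(\varphi)=1_\Omega$. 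You instead unpack that duality by hand: the inequality $\|\varphi\|_{L^\infty}\ge 1/h_s(\Omega)$ by testing the constraint against nonnegative $C^\infty_0$ functions, and the reverse inequality (together with attainment) by Hahn--Banach extension of the functional $R_{s,1}(u)\mapsto\int_\Omega u$ from $R_{s,1}(C^\infty_0(\Omega))$ and the identification $(L^1)^*=L^\infty$. What your version buys is self-containedness (no need to verify the qualification hypotheses of the cited duality theorem), an explicit construction of the optimal $\varphi$, and a clean accounting of where the Lipschitz hypothesis enters (only through $\lambda^s_{1,1}(\Omega)=h_s(\Omega)$ in the lower bound; the Hahn--Banach half uses only \eqref{pensaci!}, valid for any open bounded set). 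Two minor remarks: the injectivity of $R_{s,1}$ you check is needed only for $L$ to be well defined on the image, and your detour through $|u|$ is unnecessary, since \eqref{pensaci!} is stated for all of $\mathcal{W}^{s,1}_0(\Omega)$ without a sign restriction, so $\int_\Omega|u|\,dx\le[u]_{W^{s,1}(\mathbb{R}^N)}/h_s(\Omega)$ follows by applying it to $u$ itself.
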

\begin{proof}
We start by observing that
\[
\begin{split}
\frac{1}{h_s(\Omega)}&=\sup_{u\in \widetilde W^{s,1}_0(\Omega)\setminus\{0\}} \frac{\displaystyle\int_\Omega |u|\, dx}{\displaystyle\int_{\mathbb{R}^N}\int_{\mathbb{R}^N} \frac{|u(x)-u(y)|}{|x-y|^{N+s}}\,dx\, dy}\\
&=\sup_{u\in \widetilde W^{s,1}_0(\Omega)} \left\{\langle 1,u\rangle\, :\, \int_{\mathbb{R}^N}\int_{\mathbb{R}^N} \frac{|u(x)-u(y)|}{|x-y|^{N+s}}\,dx\, dy\le 1\right\},
\end{split}
\] 
thanks to Lemma \ref{lm:salvaculo} and Theorem \ref{teo:primo1}.
Again, the latter is a problem of the form
\[
\sup_{x\in X} \langle x^*,x\rangle-\mathcal{G}(A(x)),\qquad x^*\in X^*,
\]
where $\mathcal{G}:Y\to\mathbb{R}$ is convex lower semicontinuous and $A:X\to Y$ is linear and continuous. In this case we have
\[
X=\widetilde W^{s,1}_0(\Omega),\qquad X^*=\widetilde W^{-s,\infty}(\Omega),\qquad A=R_{s,1},\qquad Y=L^1(\mathbb{R}^N\times\mathbb{R}^N), 
\]
and
\[
\mathcal{G}(\xi)=\left\{\begin{array}{cc}
0,&\mbox{ if }\displaystyle\|\xi\|_{L^1(\mathbb{R}^N\times\mathbb{R}^N)}\le 1,\\
+\infty, & \mbox{ otherwise}.
\end{array}
\right.
\]
Then again by \cite[Proposition 5]{Ek} we have
\[
\sup_{x\in X} \langle x^*,x\rangle-\mathcal{G}(A(x))=\min_{\xi^*\in Y^*}\left\{\mathcal{G}^*(\xi^*)\, :\, A^*(\xi^*)=x^*\right \},
\]
where 
\[
x^*=1_\Omega,\qquad\mathcal{G}(\xi^*)=\|\xi^*\|_{L^\infty(\mathbb{R}^N\times\mathbb{R}^N)},\qquad Y^*=L^\infty(\mathbb{R}^N\times\mathbb{R}^N),
\]
and the adjoint operator is \(T^*=R^*_{s,1}\).
This concludes the proof.
\end{proof}
As a corollary of the previous result, we obtain the following characterization.
\begin{coro}
Let $\Omega\subset\mathbb{R}^N$ be an open and bounded Lipschitz set and $s\in(0,1)$. Then we have
\begin{equation}
\label{mincut}
h_s(\Omega)=\max\left\{h\in\mathbb{R}\, :\, \exists\, \varphi\in L^\infty(\mathbb{R}^N\times\mathbb{R}^N) \mbox{ s.\,t. } \begin{array}{r}\|\varphi\|_\infty\le 1\mbox{ and }\\ R^*_{s,1}(\varphi)\ge h \mbox{ in }\Omega
\end{array} \right\}.
\end{equation}
\end{coro}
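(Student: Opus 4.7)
The plan is to deduce the characterization \eqref{mincut} directly from Theorem \ref{teo:dualcheeger} by a homogeneity/rescaling argument, plus a straightforward duality test for the reverse inequality. Throughout, the constraint $R^*_{s,1}(\varphi)\ge h$ in $\Omega$ is to be understood in the distributional sense, i.e.\
\[
\int_{\mathbb{R}^N}\int_{\mathbb{R}^N} \varphi(x,y)\,\frac{u(x)-u(y)}{|x-y|^{N+s}}\,dx\,dy\ge h\,\int_\Omega u\,dx,
\]
for every nonnegative $u\in C_0^\infty(\Omega)$; by density (Lemma \ref{lm:LS}) and the continuity of $R^*_{s,1}$ (Lemma \ref{lm:aggiunto}), this extends to every nonnegative $u\in \mathcal{W}^{s,1}_0(\Omega)$.

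For the inequality ``$\le$'' in \eqref{mincut}, I would take any admissible pair $(h,\varphi)$ and test against an arbitrary nonnegative $u\in \widetilde W^{s,1}_0(\Omega)$. By H\"older's inequality and $\|\varphi\|_\infty\le 1$,
\[
h\,\int_\Omega u\,dx\le \langle R^*_{s,1}(\varphi),u\rangle\le \|\varphi\|_{L^\infty(\mathbb{R}^N\times\mathbb{R}^N)}\,[u]_{W^{s,1}(\mathbb{R}^N)}\le [u]_{W^{s,1}(\mathbb{R}^N)}.
\]
Taking the infimum over such $u$ and invoking the identity $\lambda^s_{1,1}(\Omega)=h_s(\Omega)$ from Theorem \ref{teo:primo1} (which uses the Lipschitz assumption on $\Omega$), we obtain $h\le h_s(\Omega)$. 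This shows the supremum on the right of \eqref{mincut} is at most $h_s(\Omega)$.

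For the reverse inequality, and simultaneously to show that the supremum is attained, I would apply Theorem \ref{teo:dualcheeger}: there exists $\varphi^*\in L^\infty(\mathbb{R}^N\times\mathbb{R}^N)$ achieving
\[
\|\varphi^*\|_{L^\infty(\mathbb{R}^N\times\mathbb{R}^N)}=\frac{1}{h_s(\Omega)}\qquad\text{and}\qquad R^*_{s,1}(\varphi^*)=1\quad\text{in }\Omega.
\]
Setting $\psi:=h_s(\Omega)\,\varphi^*$, the linearity of $R^*_{s,1}$ gives $R^*_{s,1}(\psi)=h_s(\Omega)$ in $\Omega$, while $\|\psi\|_{L^\infty(\mathbb{R}^N\times\mathbb{R}^N)}=1$. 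Hence $h=h_s(\Omega)$ is admissible on the right-hand side of \eqref{mincut}, and combined with the previous paragraph the supremum is attained and equals $h_s(\Omega)$, so one may write ``max'' in place of ``sup''.

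I do not expect any serious obstacle: both steps are essentially bookkeeping after Theorem \ref{teo:dualcheeger}. The one point that requires care is the passage from the distributional inequality $R^*_{s,1}(\varphi)\ge h$ (tested against $C_0^\infty(\Omega)$) to testing against general nonnegative $u\in \widetilde W^{s,1}_0(\Omega)$; this is where Lemma \ref{lm:LS} and the continuity estimate \eqref{holder} from the proof of Lemma \ref{lm:aggiunto} are used, and where the Lipschitz regularity of $\partial\Omega$ enters, exactly as in Theorem \ref{teo:dualcheeger}.
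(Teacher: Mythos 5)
Your proposal is correct, and it rests on the same pillar as the paper's argument, namely Theorem \ref{teo:dualcheeger} together with the homogeneity of $R^*_{s,1}$; the rescaling $\psi=h_s(\Omega)\,\varphi^*$ of the optimal $\varphi^*$ is exactly what produces the admissible pair attaining the maximum. Where you diverge is in the upper bound: the paper proves it by a purely algebraic chain of reformulations of the maximization problem (replacing $\|\varphi\|_\infty\le 1$, $R^*_{s,1}(\varphi)\ge h$ by $1/\|\varphi\|_\infty\ge h$, $R^*_{s,1}(\varphi)\ge 1$, and then by the equality constraint $R^*_{s,1}(\varphi)=1$), finally invoking Theorem \ref{teo:dualcheeger}, whereas you go back to the primal side, testing the distributional inequality $R^*_{s,1}(\varphi)\ge h$ against nonnegative $u\in C^\infty_0(\Omega)$, applying H\"older's inequality \eqref{holder}, and concluding via $\lambda^s_{1,1}(\Omega)=h_s(\Omega)$ from Theorem \ref{teo:primo1}. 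Your route is slightly longer but has the merit of making the weak-duality step explicit and of sidestepping the one point the paper's proof leaves implicit, namely why the inequality constraint $R^*_{s,1}(\varphi)\ge 1$ can be replaced by the equality $R^*_{s,1}(\varphi)=1$ without changing the value of the problem; your version handles this automatically because only the inequality is ever used in the upper bound. Both arguments use the Lipschitz hypothesis in the same place (through Lemma \ref{lm:salvaculo} and Theorem \ref{teo:primo1}), so nothing is lost or gained in generality.
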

\begin{proof}
We have
\[
\begin{split}
\max\{h\, &:\, \exists\, \varphi\in L^\infty(\mathbb{R}^N\times\mathbb{R}^N) \mbox{ s.\,t. } \|\varphi\|_\infty\le 1 \mbox{ and } R^*_{s,1}(\varphi)\ge h \}\\
&=\max\left\{h\, :\, \exists\, \varphi\in L^\infty(\mathbb{R}^N\times\mathbb{R}^N) \mbox{ s.\,t. } \frac{1}{\|\varphi\|_\infty}\ge h \mbox{ and } R^*_{s,1}(\varphi)\ge 1\right\}\\
&=\max_{\varphi\in L^\infty(\mathbb{R}^N\times\mathbb{R}^N)}\left\{\frac{1}{\|\varphi\|_\infty}\, :\, R^*_{s,1}(\varphi)=1 \mbox{ in }\Omega\right\},
\end{split}
\]
and the latter quantity coincides with
\[
\frac{1}{\min\big\{\|\varphi\|_{L^\infty(\mathbb{R}^N\times\mathbb{R}^N)}\, :\, R^*_{s,1}(\varphi)=1 \mbox{ in }\Omega\big\}}.
\]
By using Theorem \ref{teo:dualcheeger} we can conclude.
\end{proof}

\begin{oss}[Interpretation]
The characterization \eqref{mincut} can be seen as a kind of nonlocal version of the Max Flow Min Cut Theorem. 
A possible interpretation of \eqref{mincut} is the following: we have a continuous network represented by $\mathbb{R}^N$, with sources (producing a given commodity) uniformly distributed in $\Omega$ and the complement of $\Omega$ being the sink. Transportation activities are described by $\varphi$, in such a way that at each point $x\in\Omega$ we have an incoming quantity of flow $\varphi(x,y)\, |x-y|^{-N-s}$ from $y\in\mathbb{R}^N$ and an outcoming flow $\varphi(y,x)\, |x-y|^{-N-s}$ to the same $y\in\mathbb{R}^N$. 
Then the total flow at $x$ is given by (see Remark \ref{oss:divergence})
\[
R^*_{s,1}(\varphi)(x)=\int_{\mathbb{R}^N} \frac{\varphi(x,y)-\varphi(y,x)}{|x-y|^{N+s}}\,dy.
\]
The sources in $\Omega$ continuously in time produce at a rate which is (at least) $h$, that is $R^*_{s,1}(\varphi)\ge h$. The $L^\infty$ bound on $\varphi$ is clearly related to a capacity constraint for our network. A {\it cut} is any $E\subset\Omega$ and observe that for every admissible flow $\varphi$ and every cut $E\subset\Omega$, we (formally) have
\[
h\, |E|\le \int_E R^*_{s,1}(\varphi)\, dx=\int_{\mathbb{R}^N} \int_{\mathbb{R}^N} \varphi\, R_{s,1}(1_E)\, dx\,dy \le P_s(E).
\]
Thus \eqref{mincut} states that trying to find the maximal (nonlocal) flow is the same as trying to find the best (nonlocal) cut of $\Omega$.
\end{oss}

\appendix

\section{Gagliardo seminorms and differential quotients}
\label{app:embedding}
For the sake of completeness, we record the proof of a technical result we needed for the compact embedding $\widetilde W^{s,p}_0(\Omega)\hookrightarrow L^q(\Omega)$. The proof below is an adaptation of that of \cite[Proposition 4]{ADePM}. 
\begin{lm}
\label{lm:nikolski}
Let $1\le p<\infty$ and $0<s<1$, for every $u\in W^{s,p}_0(\mathbb{R}^N)$ there holds
\begin{equation}
\label{nikolski}
\sup_{|h|>0}\int_{\mathbb{R}^N} \frac{|u(x+h)-u(x)|^p}{|h|^{s\,p}}\, dx\le C\,(1-s)\,[u]^p_{W^{s,p}(\mathbb{R}^N)},
\end{equation}
for a constant $C=C(N,p)>0$.
\begin{proof}
Let $\rho\in C^\infty_0(\mathbb{R}^N)$ be a positive function with support given by the annular region $B_1(0)\setminus B_{1/2}(0)=\{x\in\mathbb{R}^N\, :\, 1/2<|x|<1\}$ and such that $\int_{\mathbb{R}^N} \rho\, dx=1$. We fix $h\in\mathbb{R}^N\setminus\{0\}$, then for every $0<\varepsilon<|h|$ we set
\[
\rho_\varepsilon(x)=\frac{1}{\varepsilon^N}\, \rho\left(\frac{x}{\varepsilon}\right),
\]
and we may write
\begin{equation}
\label{mostro}
\begin{split}
|u(x+h)-u(x)|&=\left|\int u(y)\, \rho_\varepsilon(x+h-y)\, dy+\int [u(x+h)-u(x+h-y)]\, \rho_\varepsilon(y)\, dy\right.\\
&\left.-\left(\int u(y)\, \rho_\varepsilon(x-y)\, dy+\int [u(x)-u(x-y)]\, \rho_\varepsilon(y)\, dy\right)\right|\\
&\le \left|\int u(y)\, [\rho_\varepsilon(x+h-y)-\rho_\varepsilon(x-y)]\, dy\right|\\
&+\int |u(x+h)-u(x+h-y)|\, \rho_\varepsilon(y)\, dy+\int |u(x)-u(x-y)|\, \rho_\varepsilon(y)\, dy.
\end{split}
\end{equation}
We then observe that
\[
\begin{split}
\Big|\int u(y)\, [\rho_\varepsilon(x+h-y)&-\rho_\varepsilon(x-y)]\, dy\Big|=\left|\int_0^1\, \int u(y)\, \langle \nabla \rho_\varepsilon(x-y+s\,h),h\rangle\, dy\, ds\right|\\
&=\left|\int_0^1\, \int [u(y)-u(x+s\,h)]\, \langle \nabla \rho_\varepsilon(x-y+s\,h),h\rangle\, dy\, ds\right|\\
&\le \frac{\|\nabla \rho\|_\infty\, |h|}{\varepsilon^{N+1}}\,\int_0^1 \int_{B_{\varepsilon}(x+s\,h)\setminus B_{\frac{\varepsilon}{2}}(x+s\,h)}|u(y) -u(x+s\,h)|\, dy\\
&=\frac{\|\nabla \rho\|_\infty\, |h|}{\varepsilon^{N+1}}\,\int_0^1 \int_{B_{\varepsilon}(0)\setminus B_{\frac{\varepsilon}{2}}(0)}|u(x+z+s\,h) -u(x+s\,h)|\, dz\, ds,
\end{split}
\]
where in the second identity we used that $\int \nabla \rho_\varepsilon\, dx=0$. Finally by Jensen inequality and translation invariance of the $L^p$ norm, from \eqref{mostro} we can infer
\[
\begin{split}
\int_{\mathbb{R}^N} |u(x+h)-u(x)|^p\, dx&\le \frac{C\, |h|^p}{\varepsilon^{N+p}}\, \int_{B_{\varepsilon}(0)\setminus B_{\frac{\varepsilon}{2}}(0)}\int_{\mathbb{R}^N} |u(x+z) -u(x)|^p\, dx\,dz\\
&+\frac{C\,\|\rho\|_{\infty}}{\varepsilon^N}\,\int_{B_{\varepsilon}(0)\setminus B_{\frac{\varepsilon}{2}}(0)}\int_{\mathbb{R}^N} |u(x+z)-u(x)|^p\, dx\, dz.
\end{split}
\]
Since $\varepsilon< |h|$, the previous implies in particular
\[
\int_{\mathbb{R}^N} |u(x+h)-u(x)|^p\, dx\le \frac{C\, |h|^p}{\varepsilon^{N+p}}\, \int_{B_{\varepsilon}(0)\setminus B_{\frac{\varepsilon}{2}}(0)}\int_{\mathbb{R}^N} |u(x+z) -u(x)|^p\, dx\,dz,
\]
possibly with a different constant $C$, independent of $h$ and $\varepsilon$. If we now divide both sides by $|h|^{s\,p}$, we get
\begin{equation}
\label{immezzo}
\int_{\mathbb{R}^N} \frac{|u(x+h)-u(x)|^p}{|h|^{s\,p}}\, dx\le \frac{C\, |h|^{p(1-s)}}{\varepsilon^{N+p}}\, \int_{B_{\varepsilon}(0)\setminus B_{\frac{\varepsilon}{2}}(0)}\int_{\mathbb{R}^N} |u(x+z) -u(x)|^p\, dx\,dz.
\end{equation} 
If one is only interested in estimate \eqref{nikolski} with a constant independent of $s$, then at this point one can take $\varepsilon>|h|/2$, so that by construction
\[
\frac{|h|^{p(1-s)}}{\varepsilon^{N+p}}\le 2^{p(1-s)}\, \varepsilon^{-N-s\,p}\le 2^{p(1-s)}\, |z|^{-N-s\,p},\qquad z\in B_{\varepsilon}(0)\setminus B_\frac{\varepsilon}{2}(0),
\]
which inserted in \eqref{immezzo} would give
\[
\sup_{|h|>0}\int_{\mathbb{R}^N} \frac{|u(x+h)-u(x)|^p}{|h|^{s\,p}}\, dx\le C\,[u]^p_{W^{s,p}(\mathbb{R}^N)},
\]
with $C$ independent of $s$. 
\par
On the contrary, in order to get estimate \eqref{nikolski} displaying the sharp dependence on $s$, we proceed more carefully: we multiply both sides of \eqref{immezzo} by $\varepsilon^{p\,(1-s)-1}$ and integrate in $\varepsilon$ between $0$ and $|h|$. By further simplifying the common factor $|h|^{p\,(1-s)}$, this gives
\[
\frac{1}{p\,(1-s)}\int_{\mathbb{R}^N} \frac{|u(x+h)-u(x)|^p}{|h|^{s\,p}}\, dx\le C\int_0^{|h|}\frac{1}{\varepsilon^{N+p\,s+1}}\, \int_{B_{\varepsilon}(0)}\int_{\mathbb{R}^N} |u(x+z) -u(x)|^p\, dx\,dz.
\]
If we set for simplicity
\[
G(\varepsilon)=\int_{B_{\varepsilon}(0)}\int_{\mathbb{R}^N} |u(x+z) -u(x)|^p\, dx\,dz,\quad 0<\varepsilon\le |h|,
\]
by one-dimensional Hardy inequality we have\footnote{For $0<\tau\ll 1$, integrating by parts we have
\[
\begin{split}
\int_0^{|h|} \frac{(G(\varepsilon)-\tau)_+}{\varepsilon^{N+p\,s+1}}\, d\varepsilon&=-\frac{1}{N+p\,s} \frac{(G(|h|)-\tau)_+}{|h|^{N+p\,s}}+\frac{1}{N+p\,s}\int_{\{G(\varepsilon)>\tau\}} \frac{G'(\varepsilon)}{\varepsilon^{N+p\,s}}\, d\varepsilon\\
&\le \frac{1}{N+p\,s}\int_{0}^{|h|} \frac{G'(\varepsilon)}{\varepsilon^{N+p\,s}}\, d\varepsilon
\end{split}
\]
then we pass to the limit as $\tau$ goes to $0$.}
\[
\int_0^{|h|}\frac{1}{\varepsilon^{N+p\,s+1}}\, G(\varepsilon)\, d\varepsilon\le \frac{1}{N+p\,s}\, \int_0^{|h|} \frac{G'(\varepsilon)}{\varepsilon^{N+p\,s}}\, d\varepsilon,
\]
since $G(0)=0$ and $G$ is increasing. Then we observe that
\[
\begin{split}
\int_0^{|h|} \frac{G'(\varepsilon)}{\varepsilon^{N+p\,s}}\, d\varepsilon&=\int_0^{|h|}\frac{1}{\varepsilon^{N+p\,s}}\, \int_{\partial B_\varepsilon(0)}\int_{\mathbb{R}^N} |u(x+z) -u(x)|^p\, dx\,d\mathcal{H}^{N-1}(z)\, d\varepsilon\\
&=\int_{B_{|h|}(0)}\int_{\mathbb{R}^N} \frac{|u(x+z) -u(x)|^p}{|z|^{N+p\,s}}\, dx\,dz\le [u]^p_{W^{s,p}(\mathbb{R}^N)},
\end{split}
\]
which concludes the proof.
\end{proof}
\end{lm}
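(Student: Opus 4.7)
Plan: The inequality in question is a Nikolski-type bound of the $L^p$ modulus of continuity by the Gagliardo seminorm, and its distinguishing feature is the $(1-s)$ factor on the right, which matches the expected BBM-type asymptotics. A naive direct comparison---using only the $|z|\sim |h|$ portion of the integral defining $[u]^p_{W^{s,p}(\mathbb{R}^N)}$---recovers the inequality \emph{without} the $(1-s)$ factor. The hard part is to use all scales of $z$ simultaneously and thereby gain the extra factor.

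My plan is to introduce an auxiliary scale $\varepsilon\in(0,|h|)$ and first prove a scale-dependent estimate
\[
\int_{\mathbb{R}^N} |u(x+h)-u(x)|^p \, dx \le \frac{C\,|h|^p}{\varepsilon^{N+p}}\, G(\varepsilon), \qquad G(\varepsilon):=\int_{B_\varepsilon}\!\int_{\mathbb{R}^N} |u(x+z)-u(x)|^p\, dx\, dz.
\]
To establish this I would decompose $u(x+h)-u(x)$ through a mollification $u\ast\rho_\varepsilon$, where $\rho_\varepsilon$ is supported in the annulus $B_\varepsilon\setminus B_{\varepsilon/2}$ and has unit mass. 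The increment of the smooth piece $(u\ast\rho_\varepsilon)(x+h)-(u\ast\rho_\varepsilon)(x)$ is handled by the fundamental theorem of calculus combined with $\int\nabla\rho_\varepsilon=0$, which lets me rewrite the integrand as $[u(y)-u(x+th)]\,h\cdot\nabla\rho_\varepsilon(x+th-y)$; this produces the $|h|/\varepsilon^{N+1}$ gradient gain and localises the relevant $y$ to an annular shell around $x+th$. The mollification errors $u-u\ast\rho_\varepsilon$ at the endpoints $x$ and $x+h$ are controlled directly by $G(\varepsilon)$ through the support property of $\rho_\varepsilon$.

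The heart of the argument is then to average over $\varepsilon$. Dividing the scale-dependent bound by $|h|^{sp}$, multiplying by $\varepsilon^{p(1-s)-1}$, and integrating in $\varepsilon$ from $0$ to $|h|$ causes the left-hand side to acquire the factor $|h|^{p(1-s)}/[p(1-s)]$; cancelling the common $|h|^{p(1-s)}$ on both sides yields
\[
\frac{1}{p(1-s)}\int_{\mathbb{R}^N}\frac{|u(x+h)-u(x)|^p}{|h|^{sp}}\, dx\le C\int_0^{|h|}\frac{G(\varepsilon)}{\varepsilon^{N+sp+1}}\,d\varepsilon.
\]
The remaining scale integral is handled by a one-dimensional Hardy-type inequality: since $G(0)=0$ and $G$ is increasing, integration by parts replaces $G$ by $G'$ and removes one power of $\varepsilon$; by the coarea formula $G'(\varepsilon)=\int_{\partial B_\varepsilon}\!\int|u(x+z)-u(x)|^p\,dx\,d\mathcal{H}^{N-1}(z)$, so the scale integral reassembles into the Gagliardo double integral over $B_{|h|}\times\mathbb{R}^N$, which is at most $[u]^p_{W^{s,p}(\mathbb{R}^N)}$.

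The main obstacle I anticipate lies in the pointwise-in-$\varepsilon$ estimate. One must choose the mollifier carefully: an annular support $B_\varepsilon\setminus B_{\varepsilon/2}$ (rather than full ball support) guarantees that every difference $|u(y)-u(\cdot)|$ arising in the estimate is at scale $|y-\cdot|\sim\varepsilon$, so that it matches the integrand of $G(\varepsilon)$ exactly. With a standard ball-supported mollifier the gradient bound would involve values of $u$ at all scales $\le\varepsilon$, the cancellations that align the powers of $\varepsilon$ and $|h|$ for the averaging step would be less sharp, and the precise factor $(1-s)$ (as opposed to a qualitatively correct but larger constant) would be lost.
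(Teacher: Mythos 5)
Your proposal is correct and follows essentially the same route as the paper's proof: the annular-supported mollifier $\rho_\varepsilon$, the decomposition of the increment with the $\int\nabla\rho_\varepsilon=0$ cancellation yielding the scale-dependent bound $C\,|h|^p\,\varepsilon^{-N-p}\,G(\varepsilon)$, the averaging in $\varepsilon$ against $\varepsilon^{p(1-s)-1}$ over $(0,|h|)$ to produce the $(1-s)$ factor, and the one-dimensional Hardy inequality plus coarea to reassemble the Gagliardo seminorm. No substantive differences to report.
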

\begin{oss}
The previous result can be rephrased by saying that $W^{s,p}_0(\mathbb{R}^N)$ is continuously embedded in the relevant {\it Nikolskii space}. See for example \cite{Ad} for further details on this topic.
\end{oss}

\section{A remark on two different Sobolev spaces}

In order to avoid confusion, we point out that usually (see for example \cite{DPV}) the symbol $W^{s,p}_0(\Omega)$ denotes the closure of $C^\infty_0(\Omega)$ with respect to the norm
\[
u\mapsto [u]_{W^{s,p}(\Omega)}+\|u\|_{L^p(\Omega)}.
\]
In principle $W^{s,p}_0(\Omega)$ is larger than our $\widetilde W^{s,p}_0(\Omega)$ introduced in Section \ref{sec:sobolev}.
Indeed
\begin{equation}
\label{pezzinpiu}
[u]_{W^{s,p}(\mathbb{R}^N)}=[u]_{W^{s,p}(\Omega)}+2\,\int_{\Omega}\int_{\mathbb{R}^N\setminus\Omega} \frac{|u(x)|^p}{|x-y|^{N+s\,p}}\, dx\, dy,
\end{equation}
and there could exist $\Omega\subset\mathbb{R}^N$ and $u\in W^{s,p}_0(\Omega)$ such that the second integral on the right-hand side is infinite. Though we did not need this result in the paper, for completeness we record a sufficient condition for the two spaces to coincide.
\begin{prop}
Let $s\in(0,1)$ and $1<p<\infty$ be such that $s\,p \not=1$. Let $\Omega\subset\mathbb{R}^N$ be an open bounded Lipschitz set. Then there exists a constant $C=C(N,s,p,\Omega)>0$ such that
\[
[u]_{W^{s,p}(\mathbb{R}^N)}+\|u\|_{L^p(\Omega)}\le C\, \Big([u]_{W^{s,p}(\Omega)}+\|u\|_{L^p(\Omega)}\Big),\qquad \mbox{ for every } u\in C^\infty_0(\Omega).
\]
In particular \(\widetilde W^{s,p}_0(\Omega)=W^{s,p}_0(\Omega)\) as Banach spaces.
\end{prop}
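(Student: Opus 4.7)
The plan is to start from \eqref{pezzinpiu}, read with $p$-th powers throughout, which says exactly
\[
[u]^p_{W^{s,p}(\mathbb{R}^N)} - [u]^p_{W^{s,p}(\Omega)} = 2\int_\Omega \int_{\mathbb{R}^N \setminus \Omega} \frac{|u(x)|^p}{|x-y|^{N+sp}}\, dy\, dx =: 2\,I(u),
\]
so the whole problem reduces to estimating $I(u)$ by $C\bigl([u]^p_{W^{s,p}(\Omega)} + \|u\|^p_{L^p(\Omega)}\bigr)$. For any $x\in\Omega$, setting $d(x) := \mathrm{dist}(x,\partial\Omega)$, we have $|x-y|\ge d(x)$ for every $y\in\mathbb{R}^N\setminus\Omega$, hence
\[
\int_{\mathbb{R}^N\setminus\Omega}\frac{dy}{|x-y|^{N+sp}}\le \int_{|z|\ge d(x)}\frac{dz}{|z|^{N+sp}}=\frac{N\,\omega_N}{sp}\,d(x)^{-sp},
\]
so $I(u)\le \frac{N\,\omega_N}{sp}\int_\Omega |u(x)|^p\,d(x)^{-sp}\,dx$, and it suffices to bound this last quantity.

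The key ingredient is then the \emph{fractional Hardy inequality} on bounded Lipschitz domains,
\[
\int_\Omega \frac{|u(x)|^p}{d(x)^{sp}}\,dx \le C\,\bigl([u]^p_{W^{s,p}(\Omega)} + \|u\|^p_{L^p(\Omega)}\bigr), \qquad u\in C^\infty_0(\Omega),
\]
which is known to hold precisely when $sp\neq 1$. The two ranges require qualitatively different arguments: for $sp<1$ no boundary vanishing is needed and the inequality can be established for every $u\in W^{s,p}(\Omega)$ by flattening $\partial\Omega$ via Lipschitz charts and reducing to the half-space model, where a direct radial computation closes the estimate. For $sp>1$ one exploits the extra integrability to handle the non-integrable singularity of $1/d^{sp}$ near $\partial\Omega$, typically by splitting $\Omega$ into a tubular neighborhood of the boundary (treated again via charts and the half-space Hardy inequality) and its complement, on which $1/d^{sp}$ is bounded so the term is absorbed into $\|u\|^p_{L^p(\Omega)}$. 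The borderline $sp=1$ must be excluded, since there the sharp inequality requires a logarithmic correction and the statement fails in general.

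Combining the two steps yields $[u]^p_{W^{s,p}(\mathbb{R}^N)} \le C\,\bigl([u]^p_{W^{s,p}(\Omega)} + \|u\|^p_{L^p(\Omega)}\bigr)$ on $C^\infty_0(\Omega)$. Together with the obvious reverse control $[u]_{W^{s,p}(\Omega)}\le [u]_{W^{s,p}(\mathbb{R}^N)}$ coming from the very identity \eqref{pezzinpiu}, this proves equivalence of the two norms on $C^\infty_0(\Omega)$, so that the two completions $\widetilde W^{s,p}_0(\Omega)$ and $W^{s,p}_0(\Omega)$ coincide as Banach spaces.

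The main obstacle is unquestionably the second step: establishing the Hardy inequality with the \emph{intrinsic} $W^{s,p}(\Omega)$ seminorm on the right-hand side, rather than the stronger $W^{s,p}(\mathbb{R}^N)$ seminorm of the zero extension (which would make the whole argument circular). This is precisely where the Lipschitz regularity of $\partial\Omega$ and the hypothesis $sp\neq 1$ enter in an essential way, through the flattening of the boundary and the half-space Hardy estimate.
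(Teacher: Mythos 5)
Your argument is correct and follows essentially the same route as the paper: the identity decomposing $[u]^p_{W^{s,p}(\mathbb{R}^N)}$ into the intrinsic seminorm plus the cross term, the pointwise bound of the cross term by $\frac{N\omega_N}{sp}\int_\Omega |u|^p\,\delta_\Omega^{-sp}\,dx$, and the fractional Hardy inequality of Dyda with the case split $sp>1$ versus $sp<1$. The only difference is that you sketch how the Hardy inequality itself is proved, whereas the paper simply cites it; the substance of the proof is identical.
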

\begin{proof}
Let $u\in C^\infty_0(\Omega)$, for every $x\in \Omega$ we set
\[
\delta_\Omega(x)=\inf_{y\in\mathbb{R}^N\setminus\Omega} |x-y|,
\]
i.e. this is the distance of $x$ from the complement of $\Omega$. Then we observe that 
$$
\mathbb{R}^N\setminus\Omega\subset \mathbb{R}^N\setminus B_{\delta_\Omega(x)}(x),\qquad x\in\Omega,
$$
which implies
\begin{equation}
\label{riscrivo}
\begin{split}
\int_\Omega \int_{\mathbb{R}^N\setminus \Omega} \frac{|u(x)|^p}{|x-y|^{N+s\,p}}\, dx\, dy&\le 
\int_\Omega \int_{\mathbb{R}^N\setminus B_{\delta_\Omega(x)}(x)} \frac{|u(x)|^p}{|x-y|^{N+s\,p}}\, dy\, dx\\
&=\int_\Omega  |u(x)|^p\, \left(N\,\omega_N\,\int_{\delta_{\Omega}(x)}^\infty \varrho^{-1-s\,p}\, d\varrho\right)\, dx\\
&=\frac{N\,\omega_N}{s\,p}\, \int_\Omega \frac{|u(x)|^p}{\delta_\Omega(x)^{s\,p}}\, dx.
\end{split}
\end{equation}
We now have to distinguish between $s\,p>1$ and $s\, p<1$: in the first case, by using in \eqref{riscrivo} the {\it fractional Hardy inequality} of \cite[Theorem 1.1]{Dy} 
\[
\int_\Omega \frac{|u(x)|^p}{\delta_\Omega(x)^{s\,p}}\, dx\le C\, [u]^p_{W^{s,p}(\Omega)},
\]
wtih $C=C(N,s,p,\Omega)>0$, we can conclude.
\par
In the case $s\,p<1$ the previous Hardy inequality can not hold true (see \cite[Section 2]{Dy}), but we have
\[
\int_\Omega \frac{|u(x)|^p}{\delta_\Omega(x)^{s\,p}}\, dx\le C'\,\left( [u]^p_{W^{s,p}(\Omega)}+\|u\|^p_{L^p(\Omega)}\right),\qquad u\in C^\infty_0(\Omega),
\]
with $C'=C'(N,s,p,\Omega)$, see \cite{Dy}.
\end{proof}

\begin{oss}
We point out that for the seminorm $[\,\cdot\,]_{W^{s,1}(\Omega)}$ with $\Omega\subset\mathbb{R}^N$ open bounded Lipschitz set, we have a result analogous to that of Proposition \ref{prop:conv} (see \cite[Theorem 1]{Da}). The case of a general open set $\Omega$ is slightly more complicated and can be found in \cite[Theorem 1.9]{LeS}. 
\end{oss}

\section{Pointwise inequalities}

\label{app:pointwise}

We collect some inequalities needed for the proof of the $L^\infty$ estimate for eigenfunctions.
\begin{lm}
Let $1<p<\infty$ and $\beta\ge 1$. For every $a,b\ge 0$ there holds
\begin{equation}
\label{puntuale}
|a-b|^{p-2}\, (a-b)\,(a^\beta-b^\beta)\ge \frac{\beta\, p^p}{(\beta+p-1)^p}\,\left|a^\frac{\beta+p-1}{p}-b^\frac{\beta+p-1}{p}\right|^p. 
\end{equation}
\end{lm}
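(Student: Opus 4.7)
By the symmetry of both sides under the swap $a \leftrightarrow b$, it is enough to handle the case $a > b \geq 0$ (the case $a=b$ is trivial). Under this assumption all three differences $a-b$, $a^\beta - b^\beta$ and $a^{(\beta+p-1)/p} - b^{(\beta+p-1)/p}$ are strictly positive, and the inequality to prove reduces to
\[
(a-b)^{p-1}\,(a^\beta-b^\beta) \;\ge\; \frac{\beta\, p^p}{(\beta+p-1)^p}\,\left(a^{\frac{\beta+p-1}{p}}-b^{\frac{\beta+p-1}{p}}\right)^p.
\]

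The plan is to rewrite both $a^\beta-b^\beta$ and $a^{(\beta+p-1)/p}-b^{(\beta+p-1)/p}$ using the fundamental theorem of calculus and then apply H\"older's inequality. More precisely, I would write
\[
a^\beta-b^\beta = \beta\int_b^a t^{\beta-1}\, dt, \qquad a^{\frac{\beta+p-1}{p}}-b^{\frac{\beta+p-1}{p}} = \frac{\beta+p-1}{p}\int_b^a t^{\frac{\beta-1}{p}}\, dt.
\]
Substituting the second expression into the desired inequality and simplifying the constants, everything boils down to
\[
(a-b)^{p-1}\int_b^a t^{\beta-1}\, dt \;\ge\; \left(\int_b^a t^{\frac{\beta-1}{p}}\, dt\right)^{p}.
\]

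To prove this last inequality, I would apply H\"older's inequality on $(b,a)$ with conjugate exponents $p$ and $p/(p-1)$ to the product $t^{(\beta-1)/p}\cdot 1$:
\[
\int_b^a t^{\frac{\beta-1}{p}}\, dt \;\le\; \left(\int_b^a t^{\beta-1}\, dt\right)^{\!1/p}\! \left(\int_b^a 1\, dt\right)^{\!(p-1)/p} = \left(\int_b^a t^{\beta-1}\, dt\right)^{\!1/p}\!(a-b)^{(p-1)/p}.
\]
Raising this to the $p$-th power gives exactly what is needed.

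There is no real obstacle here; the only point of care is the symmetric handling of $a,b$ and the bookkeeping of the constant $\beta p^p/(\beta+p-1)^p$, which arises precisely from the factor $((\beta+p-1)/p)^p$ that we divide through after invoking the fundamental theorem of calculus. Note that the assumption $\beta\ge 1$ is used only to ensure that $t^{\beta-1}$ is locally integrable at the origin (needed when $b=0$); for $\beta > 0$ arbitrary the same argument works as long as $b>0$.
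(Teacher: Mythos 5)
Your proof is correct and is essentially the paper's own argument: both represent the differences of powers as $\int_b^a t^{\beta-1}\,dt$ and $\int_b^a t^{(\beta-1)/p}\,dt$ and compare them via $\bigl(\int_b^a f\,dt\bigr)^p\le (a-b)^{p-1}\int_b^a f^p\,dt$, which you phrase as H\"older and the paper phrases as Jensen after the cosmetic normalization $t=b/a$. The only minor quibble is your closing remark: $t^{\beta-1}$ is locally integrable at the origin for every $\beta>0$ (not just $\beta\ge 1$), so the argument in fact extends to all $\beta>0$ without restricting to $b>0$.
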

\begin{proof}
We first observe that if $a=b$, then \eqref{puntuale} is trivially true. Let us then suppose that $a\not = b$ and of course we can suppose that $a>b$, without loss of generality. Then by collecting $a^{\beta+p-1}$ on both sides of \eqref{puntuale} and setting $t=b/a<1$, we get that \eqref{puntuale} is equivalent to
\[
(1-t)^{p-1}\,(1-t^\beta)\ge \frac{\beta\, p^p}{(\beta+p-1)^p}\,\left(1-t^\frac{\beta+p-1}{p}\right)^p,\qquad 0\le t<1.
\]
This inequality is just an easy consequence of Jensen inequality. Indeed, we have
\[
\begin{split}
(1-t)^{p-1}\, \frac{1-t^\beta}{\beta}=(1-t)^{p-1}\, \int_t^1 s^{\beta-1}\, ds\ge\left(\int_t^1 s^\frac{\beta-1}{p}\, ds\right)^p= \frac{p^p}{(\beta+p-1)^p}\left(1-t^\frac{\beta+p-1}{p}\right)^p,
\end{split}
\]
which gives the desired inequality.
\end{proof}
Actually, we used the following version of the previous result.
\begin{lm}
Let $1<p<\infty$ and $\beta\ge 1$. For every $a,b,M\ge 0$ there holds
\begin{equation}
\label{puntuale2}
|a-b|^{p-2}\, (a-b)\,(a^\beta_M-b^\beta_M)\ge \frac{\beta\, p^p}{(\beta+p-1)^p}\,\left|a^\frac{\beta+p-1}{p}_M-b^\frac{\beta+p-1}{p}_M\right|^p, 
\end{equation}
where we set
\[
a_M=\min\{a,M\}\qquad \mbox{ and }\qquad b_M=\min\{b,M\}.
\]
\end{lm}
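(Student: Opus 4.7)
The plan is to reduce \eqref{puntuale2} to the already-proven inequality \eqref{puntuale} by exploiting two elementary monotonicity facts about the truncation map $t\mapsto \min\{t,M\}$.

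First I would observe that both sides of \eqref{puntuale2} are symmetric in $(a,b)$, so without loss of generality I may assume $a\ge b\ge 0$. Under this assumption, all four quantities $a-b$, $a_M-b_M$, $a_M^\beta-b_M^\beta$, and $a_M^{(\beta+p-1)/p}-b_M^{(\beta+p-1)/p}$ are nonnegative, so I can drop the absolute values and signs. The left-hand side becomes $(a-b)^{p-1}(a_M^\beta-b_M^\beta)$ and the goal is
\[
(a-b)^{p-1}\bigl(a_M^\beta-b_M^\beta\bigr)\ \ge\ \frac{\beta\, p^p}{(\beta+p-1)^p}\,\Bigl(a_M^{(\beta+p-1)/p}-b_M^{(\beta+p-1)/p}\Bigr)^p.
\]

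Next I would verify the key inequality $a-b\ge a_M-b_M\ge 0$. This is just the $1$-Lipschitz character of the truncation, checked by splitting into the three cases $b\le a\le M$, $b\le M<a$, and $M<b\le a$; in all cases the estimate holds. Since $p-1>0$ and $a_M^\beta-b_M^\beta\ge 0$, multiplying through by the nonnegative factor $a_M^\beta-b_M^\beta$ yields
\[
(a-b)^{p-1}\bigl(a_M^\beta-b_M^\beta\bigr)\ \ge\ (a_M-b_M)^{p-1}\bigl(a_M^\beta-b_M^\beta\bigr).
\]
At this point the truncated variables $a_M,b_M\ge 0$ satisfy the hypotheses of the previous lemma \eqref{puntuale}, so I apply it with $(a_M,b_M)$ in place of $(a,b)$:
\[
(a_M-b_M)^{p-1}\bigl(a_M^\beta-b_M^\beta\bigr)\ \ge\ \frac{\beta\, p^p}{(\beta+p-1)^p}\,\Bigl(a_M^{(\beta+p-1)/p}-b_M^{(\beta+p-1)/p}\Bigr)^p.
\]
Chaining the two inequalities concludes the proof.

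There is no real obstacle here: the whole statement is a one-line consequence of \eqref{puntuale} combined with the Lipschitz property of truncation. The only point that needs a moment of attention is the sign bookkeeping, i.e. making sure that after removing absolute values under the assumption $a\ge b$ the factor $a_M^\beta-b_M^\beta$ is still nonnegative (which it is, since $t\mapsto\min\{t,M\}$ is monotone nondecreasing). Thus no delicate analysis is required and the argument is purely algebraic.
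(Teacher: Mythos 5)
Your proof is correct and follows essentially the same route as the paper: apply \eqref{puntuale} to the truncated pair $(a_M,b_M)$ and then compare $|a_M-b_M|^{p-2}(a_M-b_M)$ with $|a-b|^{p-2}(a-b)$ against the common nonnegative factor $a_M^\beta-b_M^\beta$. The only cosmetic difference is that you invoke the $1$-Lipschitz monotonicity of $t\mapsto\min\{t,M\}$ in one stroke, whereas the paper carries out the equivalent case analysis ($b\ge M$ versus $b<M$) explicitly.
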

\begin{proof}
By using inequality \eqref{puntuale} we get
\[
|a_M-b_M|^{p-2}\, (a_M-b_M)\,(a^\beta_M-b^\beta_M)\ge \frac{\beta\, p^p}{(\beta+p-1)^p}\,\left|a^\frac{\beta+p-1}{p}_M-b^\frac{\beta+p-1}{p}_M\right|^p.
\]
To conclude, we just need to prove that
\begin{equation}
\label{intermedio}
|a_M-b_M|^{p-2}\, (a_M-b_M)\,(a^\beta_M-b^\beta_M)\le |a-b|^{p-2}\, (a-b)\,(a^\beta_M-b^\beta_M).
\end{equation}
Let us suppose at first that $a\ge b$. Of course, if $a=b$ inequality \eqref{intermedio} is trivially satisfied, so we can consider $a>b$. In this case we have two possibility:
\[
b\ge M\qquad \mbox{ or }\qquad  b<M.
\]
In the first case, then $a_M=b_M=M$ and \eqref{intermedio} is satisfied. In the second case \eqref{intermedio} reduces to
\[
(a_M-b)^{p-1}\,(a^\beta_M-b^\beta)\le (a-b)^{p-1}\,(a^\beta_M-b^\beta),
\]
which is equivalent to
\[
a_M-b\le a-b.
\]
As the latter is trivially verified, the validity of \eqref{intermedio} is checked for $a\ge b$.
It is only left to observe that the discussion for the case $a\le b$ is exactly the same, so the proof in concluded.
\end{proof}

\end{document}